\author{Dimitri Ara}
\address{Dimitri Ara, Institut de Mathématiques de Jussieu, Université Paris
Diderot~-- Paris 7, Case 7012, Bâtiment Chevaleret, 75205 Paris Cedex 13,
France}
\email{ara@math.jussieu.fr}
\urladdr{http://people.math.jussieu.fr/~ara/}
\keywords{$\infty$-category, $\infty$-groupoid, globular extension, homotopy
groups, homotopy type, model category}
\subjclass[2000]{18B40, 18C10, 18C30 \textbf{18D05}, 18E35, \textbf{18G55},
55P10, \textbf{55P15}, \textbf{55Q05}, 55U35, 55U40}
\title[On the homotopy theory of Grothendieck $\infty$-groupoids]{On the
homotopy theory of\\ Grothendieck $\infty$-groupoids}
\let\nbd\nobreakdash
\begin{document}

\begin{abstract}
We present a slight variation on a notion of weak \oo-groupoid introduced by
Grothendieck in \emph{Pursuing Stacks} and we study the homotopy theory of
these \oo-groupoids. We prove that the obvious definition for homotopy
groups of Grothendieck \oo-groupoids does not depend on any choice. This
allows us to give equivalent characterizations of weak equivalences of
Grothendieck \oo-groupoids, generalizing a well-known result for strict
\oo-groupoids. On the other hand, given a model category~$\M$ in which every
object is fibrant, we construct, following Grothendieck, a fundamental
\oo-groupoid functor $\Pi_\infty$ from $\M$ to the category of
Grothendieck \oo-groupoids. We show that if $X$ is an object of $\M$, then
the homotopy groups of~$\Pi_\infty(X)$ and of $X$ are canonically
isomorphic. We deduce that the functor $\Pi_\infty$ respects weak
equivalences.
\end{abstract}

\maketitle

\section*{Introduction}

The notion of Grothendieck \oo-groupoid has been introduced in 1983 by
Grothendieck in a famous letter to Quillen that became the starting point of
the equally famous text \emph{Pursuing Stacks} (\cite{GrothPS}). For more
than twenty years, people thought that Grothendieck's definition was only
informal. Maltsiniotis realized in 2006 that this definition is perfectly
precise. This original definition is explained in \cite{MaltsiGr}.
Maltsiniotis also suggested a simplification of the definition in
\cite{MaltsiCat}. All the texts written since then (namely \cite{AraThesis}
and \cite{MaltsiGrCat}) also use this simplification. In this article, we
use a slight variation on this simplification.

The main motivation of Grothendieck is to generalize to higher dimensions the
classical fact that the homotopy $1$-type of a topological space is
classified by its fundamental groupoid up to equivalence. Given a space $X$,
Grothendieck considers the \oo-graph whose objects are points of $X$, whose
$1$-arrows are paths, whose $2$-arrows are relative homotopies between paths,
whose $3$-arrows are relative homotopies between relative homotopies between
paths, and so on. This \oo-graph seems to bear an algebraic structure: for
instance, one can compose arrows, though in a non-canonical way; these
compositions are associative up to non-canonical higher arrows; these
higher arrows satisfy higher coherences, again in a non-canonical way; etc.
Grothendieck suggests that this \oo-graph should be equipped with the
structure of an \oo-groupoid (a notion to be defined) and that this
\oo-groupoid, up to equivalence, should classify the homotopy type of $X$.
This is an imprecise statement of Grothendieck's conjecture.

The question now is how to define this structure of \oo-groupoid. There
exists definitions of $n$-groupoids for small $n$'s, obtained by giving
explicit generators for coherences. But even for $n = 3$, the standard
definition (see \cite{GPSTricat}) is almost intractable. One has to find
another kind of definition.

Here is how Grothendieck proceeds. His idea is to define a category $C$
encoding the algebraic structure of an \oo-groupoid. The category of
\oo-groupoids will then be defined as the full subcategory of the category of
presheaves on $C$ satisfying some left exactness condition (i.e., some 
higher Segal condition). This category $C$ will not be unique. This reflects
the fact that there is no canonical choice of generators for higher
coherences. A category $C$ encoding the theory of \oo-groupoids will be
called a coherator. But how to define a coherator?

Grothendieck's main insight is that a very simple principle can be used to
generate inductively higher coherences, hence giving a definition for
coherators. We will refer to this principle as the ``coherences generating
principle''. Here is how it goes. Suppose $G$ is a (weak) \oo-groupoid,
whatever it means. Let $X$ be a globular pasting scheme decorated by arrows
of $G$. For instance, $X$ might be
\[
\UseAllTwocells
\xymatrix@C=4pc@H=5pc{
A
\ar[r]^f
&
B
\ar@/^2.5pc/[r]|{\vphantom{X}}="g"^g
\ar@/^1pc/[r]|{\vphantom{X}}="h"^(0.3)h
\ar@/_1pc/[r]|{\vphantom{X}}="i"_(0.3)i
\ar@/_2.5pc/[r]|{\vphantom{X}}="j"_j
\ar@{=>}"g";"h"^{\,\alpha}
\ar@{=>}"h";"i"^{\,\beta}
\ar@{=>}"i";"j"^{\,\gamma}
&
C
\ar@/^1.5pc/[r]|{\vphantom{X}}="k"^k
\ar[r]|{\vphantom{X}}="l"^(0.3)l
\ar@/_1.5pc/[r]|{\vphantom{X}}="m"_m
\ar@{=>}"k";"l"^{\,\delta}
\ar@{=>}"l";"m"^{\,\epsilon}
&
D\pbox{.}
&
}
\]
Suppose that from such an $X$, one can build, using operations of the algebraic
structure of \oo-groupoids, two parallel $n$-arrows $\Lambda$ and
$\Lambda'$. Then the coherences generating principle states that there
should exist an operation, in the algebraic structure of \oo-groupoid,
producing from $X$ an $(n+1)$-arrow going from $\Lambda$ to $\Lambda'$.

For instance, let $X$ be as above and consider
\[
\Lambda =
\big(
(\epsilon \circ \delta)
\comp
((\gamma \circ \beta) \circ \alpha)
\big) \comp \id{f} 
\quad\text{and}\quad
\Lambda' =
\big(
(\epsilon \comp \gamma)
\circ
(\delta \comp (\beta \circ \alpha))
\big) \comp \id{f},
\]
where we have denoted by $\circ$ the vertical composition of $2$-arrows and
by $\comp$ the horizontal composition of $2$-arrows. These two arrows are
parallel: their source is $(kg)f$ and their target is $(mj)f$. Hence, the
coherences generating principle says that there should exist an operation, in
the algebraic structure of \oo-groupoid, producing a $3$-arrow 
\[
\big(
(\epsilon \circ \delta)
\comp
((\gamma \circ \beta) \circ \alpha)
\big) \comp \id{f} 
\Rrightarrow
\big(
(\epsilon \comp \gamma)
\circ
(\delta \comp (\beta \circ \alpha))
\big) \comp \id{f}
\]
from $f$, $\alpha$, $\beta$, $\gamma$, $\delta$, $\epsilon$ fitting in a
diagram as above.

If $C$ is a category encoding an algebraic theory satisfying the coherences
generating principle, then $C$ describes an algebraic structure where all
operations of \oo-groupoids exist, but possibly in a too strict way. A
coherator will be defined as a category encoding an algebraic theory freely
satisfying the coherences generating principle, that is, a theory obtained
by freely adding operations, using the principle, from the operations source
and target.

Now that we have given an idea of Grothendieck's definition, let us come
back to his original motivation: the classification of homotopy types.
Grothendieck shows that if $X$ is a topological space, then the \oo-graph
associated to $X$ can be endowed with the structure of an \oo-groupoid. More
precisely, he constructs a fundamental \oo-groupoid functor~$\Pi_\infty$
from the category of topological spaces to the category $\wgpd$ of
Grothendieck \oo-groupoids. He conjectures that this functor induces an
equivalence of categories between the homotopy category of topological
spaces $\Hot$ and an appropriate localization of $\wgpd$.  This conjecture
is still not proved.  At the end of this article, we give a more precise
statement of the conjecture.

A special feature of Grothendieck \oo-groupoids is that they are not
defined as \oo-cate\-gories satisfying some invertibility conditions.
Nevertheless, Maltsiniotis realized that a variation on Grothendieck's
definition gives rise to a new notion of weak \oo-category. This notion of
\oo-category is closely related to the notion of weak \oo-category
introduced by Batanin in \cite{BataninWCat}. The precise relation between
Grothendieck-Maltsiniotis \oo-categories and Batanin \oo-categories is
studied in the PhD thesis \cite{AraThesis} of the author.

\bigskip

This article is about the homotopy theory of Grothendieck \oo-groupoids. By
homotopy theory of \oo-groupoids, we mean the study of the category of
\oo-groupoids endowed with weak equivalences of \oo-groupoids. Our
contribution to the subject, in addition to foundational aspects, is of
three kinds.

First, we propose a slight modification of the definition of Grothendieck
\oo-groupoids, whose purpose is to make canonical the inclusion functor from
strict \oo-groupoids to Grothendieck \oo-groupoids. This modification
takes the form of an additional condition in the definition of an admissible
pair. The importance of this modification will be made clear in the
forthcoming paper \cite{AraStrWeak}.

Second, we prove several foundational results on homotopy groups and weak
equivalences of Grothendieck \oo-groupoids. If $G$ is an \oo-groupoid and
$x$ is an object of $G$, the $n$-th homotopy group $\pi_n(G, x)$ is defined
as the group of $n$-arrows, up to $(n+1)$-arrows, whose source and target
are the iterated unit of $x$ in dimension $n-1$. This definition depends a
priori on several choices. We show that $\pi_n(G, x)$ does not depend on
these choices.  The heart of the proof is the so-called division lemma. We
also deduce from this lemma that a $1$-arrow $u : x \to y$ of $G$ induces an
isomorphism from $\pi_n(G, x)$ to $\pi_n(G, y)$. Finally, we give four
equivalent characterizations of weak equivalences of Grothendieck
\oo-groupoids, generalizing a theorem of Simpson on strict $n$-categories
with weak inverses (see Theorem 2.1.III of \cite{Simpson} or Definition
2.2.3 of \cite{SimpsonHTHC}).

Third, given a model category $\M$ in which every object is fibrant, we
construct, following Grothendieck, a fundamental \oo-groupoid functor
$\Pi_\infty : \M \to \wgpd$, which depends on liftings in $\M$. If $X$ is an
object of $\M$, we define a model categorical notion of homotopy groups of
$X$. We show that the homotopy groups of~$\Pi_\infty(X)$ are canonically
isomorphic to the homotopy groups of $X$. We deduce that the homotopy groups
of $\Pi_\infty(X)$ depend only on $X$ and that the functor $\Pi_\infty$
respects weak equivalences. In particular, applying this result to the
category of topological spaces, we give a precise statement of
Grothendieck's conjecture.

\bigskip

Our paper is organized as follows. In the first section, we introduce the
globular language, in which the notion of Grothendieck \oo-groupoid will be
phrased, and in particular the notion of globular extension. Section 2 is
dedicated to the definition of Grothendieck \oo-groupoids. We define in
particular contractible globular extensions and coherators. In Section 3, we
explain how to construct structural maps out of a Grothendieck \oo-groupoid.
In particular, we construct enough structural maps to show that a
Grothendieck \oo-groupoid can be truncated to a bigroupoid. In Section 4, we
study the notions of homotopy groups and weak equivalences of \oo-groupoids.
We show that the homotopy groups are well-defined and we give four
equivalent characterizations of weak equivalences. In Section 5, we explain
Grothendieck's construction of the fundamental \oo-groupoid functor
$\Pi_\infty : \M \to \wgpd$. We interpret this construction in terms of
Reedy model structures. In Section 6, we recall Quillen's $\pi_1$ theory
introduced in \cite{Quillen}. We give an alternative formulation in terms of
slice categories. In Section 7, we work in a model category $\M$ in which
every object is fibrant. We define a notion of based objects of $\M$ and,
using a loop space construction, we define a theory of homotopy groups for
these based objects. One difference with Quillen's original theory is that
our category $\M$ has no zero object. We show that with our definitions,
$\pi_1(X, x)$ is canonically isomorphic to $\pi_0(\Omega_x X)$. Finally, in
Section 8, we compare the homotopy groups of $\Pi_\infty(X)$ and the
homotopy groups of $X$, where $X$ is an object of a model category in which
every object is fibrant. We finish by a precise statement of Grothendieck's
conjecture.

\bigskip

If $C$ is a category, we will denote by $C^\op$ the opposite category
and by $\pref{C}$ the category of presheaves on $C$. If
\[
\xymatrix@C=1pc@R=1pc{
X_1 \ar[dr]_{f_1} & & X_2 \ar[dl]^{g_1} \ar[dr]_{f_2} & &  \cdots & & X_n
\ar[dl]^{g_{n-1}} \\
& Y_1 & & Y_2 & \cdots & Y_{n-1}
}
\]
is a diagram in $C$, we will denote by 
\[ (X_1, f_1) \times_{Y_1} (g_1, X_2, f_2) \times_{Y_2} \dots
\times_{Y_{n-1}} (g_{n-1}, X_n) \]
its limit. Dually, we will denote by
\[ (X_1, f_1) \amalg_{Y_1} (g_1, X_2, f_2) \amalg_{Y_2} \dots
\amalg_{Y_{n-1}} (g_{n-1}, X_n) \]
the colimit of the corresponding diagram in $C^\op$.

\section{The globular language}

\begin{tparagr}{The globe category}
We will denote by $\G$ the \ndef{globe category}, that is, the category
generated by the graph
\[
\xymatrix{
\Dn{0} \ar@<.6ex>[r]^-{\Ths{1}} \ar@<-.6ex>[r]_-{\Tht{1}} &
\Dn{1} \ar@<.6ex>[r]^-{\Ths{2}} \ar@<-.6ex>[r]_-{\Tht{2}} &
\cdots \ar@<.6ex>[r]^-{\Ths{i-1}} \ar@<-.6ex>[r]_-{\Tht{i-1}} &
\Dn{i-1} \ar@<.6ex>[r]^-{\Ths{i}} \ar@<-.6ex>[r]_-{\Tht{i}} &
\Dn{i} \ar@<.6ex>[r]^-{\Ths{i+1}} \ar@<-.6ex>[r]_-{\Tht{i+1}} &
\dots
}
\]
under the coglobular relations
\[\Ths{i+1}\Ths{i} = \Tht{i+1}\Ths{i}\quad\text{and}\quad\Ths{i+1}\Tht{i} =
\Tht{i+1}\Tht{i}, \qquad i \ge 1.\]
For $i \ge j \ge 0$, we will denote by $\Ths[j]{i}$ and $\Tht[j]{i}$ the
morphisms from $\Dn{j}$ to $\Dn{i}$ defined by
\[\Ths[j]{i} = \Ths{i}\cdots\Ths{j+2}\Ths{j+1}\quad\text{and}\quad
  \Tht[j]{i} = \Tht{i}\cdots\Tht{j+2}\Tht{j+1}.\]
\end{tparagr}

\begin{tparagr}{Globular sets}
The category of \ndef{globular sets} or \ndef{\oo-graphs} is the category
$\pref{\G}$ of presheaves on $\G$. The data of a globular set $X$ amounts
to the data of a diagram of sets
\[
\xymatrix{
\cdots \ar@<.6ex>[r]^-{\Gls{i+1}} \ar@<-.6ex>[r]_-{\Glt{i+1}} &
X_{i} \ar@<.6ex>[r]^-{\Gls{i}} \ar@<-.6ex>[r]_-{\Glt{i}} &
X_{i-1} \ar@<.6ex>[r]^-{\Gls{i-1}} \ar@<-.6ex>[r]_-{\Glt{i-1}} &
\cdots \ar@<.6ex>[r]^-{\Gls{2}} \ar@<-.6ex>[r]_-{\Glt{2}} &
X_1 \ar@<.6ex>[r]^-{\Gls{1}} \ar@<-.6ex>[r]_-{\Glt{1}} &
X_0
}
\]
satisfying the globular relations
\[\Gls{i}\Gls{i+1} = \Gls{i}\Glt{i+1}\quad\text{and}\quad\Glt{i}\Gls{i+1} =
\Glt{i}\Glt{i+1}, \qquad i \ge 1.\]
For $i \ge j \ge 0$, we will denote by $\Gls[j]{i}$ and $\Glt[j]{i}$ the
maps from $X_i$ to $X_j$ defined by
\[\Gls[j]{i} = \Gls{j+1}\cdots\Gls{i-1}\Gls{i}\quad\text{and}\quad
  \Glt[j]{i} = \Glt{j+1}\cdots\Glt{i-1}\Glt{i}.\]

If $X$ is a globular set, we will call $X_0$ the set of \ndef{objects} of
$X$ and, for $i \ge 0$, $X_i$ the set of \ndef{$i$-arrows}. If $u$ is an
$i$-arrow of $X$ for $i \ge 1$, $\Gls{i}(u)$ (resp.~$\Glt{i}(u)$)
will be called the \ndef{source} (resp.~the \ndef{target}) of $u$. We
will often denote an arrow $u$ of $X$ whose source is $x$ and whose target
is $y$ by $u : x \to y$.
\end{tparagr}

\begin{tparagr}{Globular sums}
Let $n$ be a positive integer. A \ndef{table of dimensions} of width $n$ is
the data of integers $i_1, \dots, i_n, i'_1, \dots, i'_{n-1}$ such that
\[ i_k > i'_k\quad\text{and}\quad i_{k+1} > i'_k, \qquad 1 \le k \le n - 1. \]
We will denote such a table of dimensions by
\[
\tabdim.
\]
The \ndef{dimension} of such a table is the greatest integer appearing in
the table.

Let $(C, F)$ be a category under $\G$, that is, a category $C$ endowed with a functor
$F : \G \to C$. We will denote in the same way the objects and morphisms
of $\G$ and their image by the functor $F$. Let
\[ T = \tabdim \]
be a table of dimensions. The \ndef{globular sum} in $C$ associated to $T$
(if it exists) is the iterated amalgamated sum
\[ (\Dn{i_1}, \Ths[i'_1]{i_1}) \amalgd{i'_1} (\Tht[i'_1]{i_2}, \Dn{i_2},
\Ths[i'_2]{i_2}) \amalgd{i'_2} \dots
\amalgd{i'_{n-1}} (\Tht[i'_{n-1}]{i_n}, \Dn{i_n}) \]
in $C$, that is, the colimit of the diagram
\[
\xymatrix@R=.2pc@C=1pc{
\Dn{i_1} &  & \Dn{i_2} &  & \Dn{i_3} &        & \Dn{i_{n - 1}} & & \Dn{i_{n}} \\
  &  &   &  &   & \cdots &     & & \\
  & \Dn{i'_1}
  \ar[uul]^{\Ths[i'_1]{i_1}}
  \ar[uur]_{\negthickspace \Tht[i'_1]{i_2}}  &   &
  \Dn{i'_2}' 
  \ar[uul]^{\Ths[i'_2]{i_2}\negthickspace}
  \ar[uur]_{\Tht[i'_2]{i_3}}  &  &  & &
\Dn{i'_{n-1}} \ar[uul]^{\Ths[i'_{n-1}]{i_{n-1}}}  \ar[uur]_{\Tht[i'_{n-1}]{i_n}}
& &
}
\]
in $C$.
We will denote it briefly by
\[
\Dn{i_1} \amalgd{i'_1} \Dn{i_2} \amalgd{i'_2} \dots
\amalgd{i'_{n-1}} \Dn{i_n}.
\]
\end{tparagr}

\begin{tparagr}{Globular extensions}
A category $C$ under $\G$ is said to be a \ndef{globular extension} if for
every table of dimensions $T$ (of any width), the globular sum associated to
$T$ exists in $C$. 

Let $C$ and $D$ be two globular extensions. A \ndef{morphism of globular
extensions} from $C$ to $D$ is a functor from $C$ to $D$ under $\G$
(that is, such that the triangle
\[
\xymatrix@C=1pc@R=1pc{
& \G \ar[ld] \ar[rd] \\
C \ar[rr] & & D
}
\]
commutes) which respects globular sums. We will also call such a functor a
\ndef{globular functor}. 
\end{tparagr}

\begin{exs}\label{exs:glext}
\
\begin{myenumerate}
\item 
If $C$ is a cocomplete category and $F : \G \to C$ is any functor, then $(C,
F)$ is a globular extension.

\item 
Let $\Top$ be the category of topological spaces. We define a functor $R :
\G \to \Top$ in the following way. For $i \ge 0$, the object $\Dn{i}$ is
sent by $R$ to the $i$-dimensional ball
\[ \Dtop{i} = \{x \in \mathbb{R}^i ; ||x|| \le 1\}. \]
For $i \ge 1$, the morphisms $\Ths{i}$ and $\Tht{i}$ are sent by $R$
respectively to $\Tops{i}$ and $\Topt{i}$ defined by
\[ \Tops{i}(x) = (x, \sqrt{1 - \Vert x\Vert^2}) \quad\text{and}\quad
  \Topt{i}(x) = (x, -\sqrt{1 - \Vert x\Vert^2}),
  \qquad x \in \Dtop{i-1}.
\]
These morphisms are the inclusions of the two hemispheres of $\Dtop{i}$. One
instantly checks that these maps satisfy the coglobular relations and our
functor $R$ is thus well-defined. Note that this functor is faithful. The
category $\Top$ being cocomplete, $(\Top, R)$ is a globular extension. In
what follows, $\Top$ will always be endowed with this globular extension
structure.

\item 
Let $h : \G \to \pref{\G}$ be the Yoneda functor. Then $(\pref{\G}, h)$ is a
globular extension. The globular sums for this globular extension are the
globular sets $T^\ast$ (where $T$ is a finite planar rooted tree) introduced
by Batanin in \cite{BataninWCat}.

\item
Let $\Thz$ be the full category of $\pref{\G}$ whose objects consist of a
choice of a globular sum (which is only defined up to isomorphism) for every
table of dimensions. The category~$\Thz$ is obviously endowed with the
structure of a globular extension. This category is canonically
isomorphic to the category $\Thz$ defined in terms of finite planar rooted
trees by Berger in \cite{BergerNerve}.

We will see that the globular extension $\Thz$ is the initial globular
extension in some $2$-categorical sense (Proposition \ref{prop:pu_Thz}).
See also Proposition 3.2 of \cite{AraThtld} for a more abstract point of
view on $\Thz$.

\item
Let $\Theta$ be the full subcategory of the category of strict \oo-categories
whose objects are free strict \oo-categories on objects of $\Thz$.
The category $\Theta$ is canonically endowed with the structure of a
globular extension. This category is canonically isomorphic to the cell
category introduced by Joyal in \cite{JoyalTheta}, as was proved
independently by Makkai and Zawadowski in \cite{MakZaw} and by Berger in
\cite{BergerNerve}. Alternative definitions of $\Theta$ are given in
\cite{BergerNerve} and \cite{BergerWreath}. See also Proposition 3.11 of
\cite{AraThtld} for a definition of $\Theta$ by universal property.

\item
Let $\Thtld$ be the full subcategory of the category of strict \oo-groupoids
whose objects are free strict \oo-groupoids on objects of $\Thz$. The
category $\Thtld$ is canonically endowed with the structure of a globular
extension. The category $\Thtld$ can be thought of as a groupoidal analogue
to Joyal's cell category. See Proposition 3.18 of \cite{AraThtld} for a
definition of $\Thtld$ by universal property.
\end{myenumerate}
\end{exs}

\begin{prop}\label{prop:pu_Thz}
The globular extension $\Thz$ has the following universal property: for every
globular extension $(C, F)$, there exists a globular functor $F_0 : \Thz \to
C$, unique up to a unique natural transformation, such that the triangle
\[
\xymatrix@C=2.5pc@R=1.5pc{
\Thz \ar[dr]^{F_0} \\
\G \ar[r]_F \ar[u] & C \\
}
\]
commutes.
\end{prop}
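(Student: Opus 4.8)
The plan is to construct $F_0 : \Thz \to C$ directly using the fact that every object of $\Thz$ is, by definition, a chosen globular sum in $\pref{\G}$, together with the universal (colimit) property of globular sums in the target. Concretely, let $S$ be an object of $\Thz$, so that $S$ is a globular sum in $\pref{\G}$ associated to some table of dimensions $T$. Since $(C, F)$ is a globular extension, the globular sum associated to $T$ exists in $C$; I define $F_0(S)$ to be (a choice of) this colimit. For a morphism $S \to S'$ in $\Thz$ between two such globular sums, I use the fact that $S$ is a colimit of representables $h_{\Dn{i_k}}$ in $\pref{\G}$ (indexed along the diagram displayed in the definition of globular sums): by Yoneda, a morphism out of $S$ is determined by a compatible family of morphisms out of the $h_{\Dn{i_k}}$, i.e. by a compatible family of globes in $S'$; applying $F$ to the corresponding globes in $C$ and using the colimit property of $F_0(S)$ produces a unique morphism $F_0(S) \to F_0(S')$ in $C$. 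Functoriality of $F_0$ then follows from the uniqueness clauses in these universal properties, and the triangle with $F$ commutes because on the representables $h_{\Dn{i}} = \Dn{i}$ the construction returns $F(\Dn{i})$ by design (the one-term table of dimensions gives the trivial colimit). That $F_0$ respects globular sums is automatic: $F_0$ sends the chosen globular sum for $T$ in $\Thz$ to the chosen globular sum for $T$ in $C$, and any globular sum in $\Thz$ is canonically isomorphic to such a chosen one.

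For uniqueness up to unique natural transformation, suppose $F_0'$ is another globular functor under $\G$. For an object $S$ of $\Thz$ associated to the table $T$, both $F_0(S)$ and $F_0'(S)$ are globular sums in $C$ associated to $T$: indeed $F_0(S)$ is so by construction, and $F_0'(S)$ is so because $F_0'$ respects globular sums and sends the diagram exhibiting $S$ as a colimit in $\Thz$ (a diagram of globes and source/target maps, hence lying in the image of $\G$, where $F_0'$ agrees with $F$) to the corresponding diagram in $C$. Since any two colimits of the same diagram are canonically isomorphic, there is a unique isomorphism $F_0(S) \xrightarrow{\sim} F_0'(S)$ compatible with the colimit cocones; naturality of this family in $S$ follows again from the uniqueness part of the colimit universal property, applied to each morphism of $\Thz$. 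Thus the natural transformation $F_0 \Rightarrow F_0'$ is an isomorphism and is unique among natural transformations restricting to the identity on $\G$; since one checks every natural transformation between globular functors under $\G$ is automatically the identity on $\G$, it is unique outright.

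The main obstacle, and the point requiring the most care, is the bookkeeping needed to make the assignment on morphisms genuinely functorial and well-defined independently of the presentation of a morphism of $\Thz$ as a family of globes. The subtlety is that $\Thz$ is a full subcategory of $\pref{\G}$, so morphisms there are arbitrary maps of globular sets between globular sums, not just the "formal" ones coming from the diagram; one must verify that the colimit description of $\Hom_{\pref{\G}}(S, S')$ — valid since $S$ is a colimit of representables and each $\Hom(h_{\Dn{i}}, S') = S'_{i}$ by Yoneda — transports correctly through $F$. Once this identification is set up cleanly, everything else is a diagram chase driven by uniqueness of maps into colimits. (I would also remark that this proposition is essentially the statement that $\Thz$ is the free globular extension on the empty set of additional operations, i.e. the initial object of the $2$-category of globular extensions; the proof above is just the explicit verification of that universal property.)
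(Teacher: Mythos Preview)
The paper does not give an in-line proof of this proposition: it simply refers the reader to Proposition~3.2 of \cite{AraThtld}. Your proposal is therefore not competing with an argument in the present paper, and what you have written is essentially the standard direct verification of the universal property (define $F_0$ on objects by choosing globular sums in $C$, on morphisms via Yoneda plus the colimit universal property, and deduce uniqueness from uniqueness of colimits). This is correct and is exactly the kind of argument one expects behind the cited reference.

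One small point deserves tightening. In the last step you assert that ``every natural transformation between globular functors under~$\G$ is automatically the identity on~$\G$''. As stated this is not obvious: a natural transformation $\alpha : F_0 \Rightarrow F_0'$ restricts to a natural endomorphism of $F$ on~$\G$, and nothing forces a natural endomorphism of an arbitrary functor to be the identity. The clean way to get uniqueness is rather to observe that, for each object $S$ of $\Thz$, the cocone morphisms $\ceps{k} : \Dn{i_k} \to S$ are jointly epimorphic (they present $S$ as a colimit), and both $F_0$ and $F_0'$ send them to the cocone morphisms exhibiting $F_0(S)$ and $F_0'(S)$ as colimits. Naturality of $\alpha$ along the $\ceps{k}$'s then pins down $\alpha_S$ uniquely \emph{once the components $\alpha_{\Dn{i}}$ are fixed}; taking the $\alpha_{\Dn{i}}$ to be identities (which is the natural convention for ``under~$\G$'', and is what the 2-categorical initiality statement in the paper's Remark actually means) yields the unique comparison isomorphism. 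With that adjustment your argument is complete.
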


\begin{proof}
See Proposition 3.2 and the next paragraph of \cite{AraThtld}.
\end{proof}

\begin{rem}
If $(C, F)$ is a globular extension, an lifting $F_0 : \Thz \to C$, as in the
above universal property, amounts to the choice of a globular sum in $C$ for
every table of dimensions.
\end{rem}

\begin{tparagr}{Globular presheaves}
Let $C$ be a globular extension. A \ndef{globular presheaf} on $C$ or
\ndef{model} of $C$ is a presheaf~$X : C^\op \to \Set$ on $C$ such that the
functor $X^\op : C \to \Set^\op$ respects globular sums,
i.e., such that for every table of dimensions
\[ T = \tabdim, \]
the canonical map
\[
X(\Dn{i_1} \amalgd{i'_1} \dots \amalgd{i'_{n-1}} \Dn{i_n}) \to
X_{i_1} \times_{X_{i'_1}} \dots \times_{X_{i'_{n-1}}} X_{i_n}
\]
is a bijection. We will denote by $\Mod{C}$ the full subcategory of the
category $\pref{C}$ of presheaves on $C$ whose objects are globular
presheaves.

The canonical functor $\G \to C$ induces a functor $\pref{C} \to \pref{\G}$
which restricts to a functor $\Mod{C} \to \pref{\G}$. If $X$ is a globular
presheaf on $C$, the image of $X$ by this functor will be called the
\ndef{underlying globular set} of $X$. We will often implicitly apply the
underlying globular set functor to transfer notation and terminology from
globular sets to globular presheaves.  For instance, we will denote
$X(\Dn{i})$ by $X_i$ and we will call this set the set of $i$-arrows of $X$.
\end{tparagr}

\begin{exs}
\ 
\nopagebreak
\begin{myenumerate}
\item 
The category of globular presheaves on $\Thz$ is canonically equivalent to
the category of globular sets. More precisely, the composition
\[ \Mod{\Thz} \to \pref{\Thz} \xrightarrow{i^\ast} \pref{\G}, \]
where $i^\ast$ denotes the restriction functor induced by the canonical
functor $i : \G \to \Thz$, is an equivalence of categories. See Lemma 1.6 of
\cite{BergerNerve} or Proposition 3.5 of \cite{AraThtld}.

\item 
The category of globular presheaves on $\Theta$ is canonically equivalent to
the category of strict \oo-categories. See Theorem 1.12 of
\cite{BergerNerve} or Proposition 3.14 of \cite{AraThtld}.

\item 
The category of globular presheaves on $\Thtld$ is canonically equivalent to
the category of strict \oo-groupoids. See Proposition 3.21 of
\cite{AraThtld}.
\end{myenumerate}
\end{exs}

\begin{tparagr}{Globular extensions under $\Thz$}
A \ndef{globular extension under $\Thz$} is a category $C$ endowed with a
functor $\Thz \to C$ such that $(C, \G \to \Thz \to C)$ is a globular
extension.  If~$C$ is a globular extension under $\Thz$, the globular sum
associated to a table of dimensions is uniquely defined. A \ndef{morphism of
globular extensions under $\Thz$} is a functor under $\Thz$ between globular
extensions under~$\Thz$. Such a functor automatically respects globular
sums.
\end{tparagr}

\begin{prop}
Let $C$ be a category under $\Thz$. There exists a globular
extension~$\glcomp{C}$ under $\Thz$, endowed with a functor $C \to
\glcomp{C}$ under $\Thz$ having the following universal property:
for every globular extension $D$ under $\Thz$, endowed with a
functor $C \to D$ under $\Thz$, there exists a unique functor
$\glcomp{C} \to D$ such that the triangle
\[
\xymatrix@R=.2pc{
& \glcomp{C} \ar[dd] \\
C \ar[ur] \ar[dr] \\
& D
}
\]
commutes.
\end{prop}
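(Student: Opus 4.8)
The plan is to realize $\glcomp{C}$ by universally forcing a set of cocones of $C$ to become colimiting. Write $j : \Thz \to C$ for the structure functor. Unwinding the definitions, a globular extension under $\Thz$ is the same as a globular extension whose structure functor from $\Thz$ is globular, that is, a category under $\Thz$ in which, for every table of dimensions $T$, the image under the structure functor of the colimit cocone exhibiting the globular sum $S_T$ in $\Thz$ is again colimiting. For each table $T$, let $D_T$ denote the diagram of globes whose colimit defines the globular sum associated to $T$, and let $\lambda_T$ be the image under $j$ of the colimit cocone exhibiting $S_T$ as the colimit of $D_T$; thus $\lambda_T$ is a cocone in $C$ under the finite diagram $D_T$, with vertex $j(S_T)$. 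Since a table of dimensions is a finite piece of data, the tables form a set; and --- this is the feature that makes the situation under $\Thz$ lighter than the analogous one over $\G$ --- no globular sum object ever has to be adjoined, since in a globular extension under $\Thz$ the globular sum of $D_T$ is necessarily $j(S_T)$, which is already an object of $C$. It therefore suffices to construct the universal category under $C$ in which every $\lambda_T$ becomes colimiting.

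I would carry this out by an $\omega$-indexed iteration resembling the small object argument. Set $C_0 = C$. Given $C_n$, a category under $C$ with the same set of objects, let $C_{n+1}$ be obtained from $C_n$ by simultaneously adjoining, for every table $T$, every object $X$ and every cocone $D_T \Rightarrow X$ in $C_n$ not induced by a morphism $j(S_T) \to X$, one new such morphism, and imposing, for every pair of morphisms $j(S_T) \rightrightarrows X$ in $C_n$ inducing the same cocone under $D_T$, their equality. Let $\glcomp{C} = \mathrm{colim}_n C_n$, computed in $\Cat$. Since all transition functors are the identity on objects, $\glcomp{C}$ has the same objects as $C$ and $\mathrm{Hom}_{\glcomp{C}}(A, B) = \mathrm{colim}_n \mathrm{Hom}_{C_n}(A, B)$; as each $D_T$ is finite, every cocone $D_T \Rightarrow X$ in $\glcomp{C}$ already lives in some $C_n$, hence is induced by a morphism out of $j(S_T)$ in $C_{n+1}$, and likewise any two morphisms out of $j(S_T)$ inducing the same such cocone are identified at a finite stage, hence equal in $\glcomp{C}$. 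Therefore every $\lambda_T$ becomes colimiting in $\glcomp{C}$, so $\glcomp{C}$ admits all globular sums and is a globular extension; its structure functor $\Thz \to C \to \glcomp{C}$ sends globular sums to globular sums, so $\glcomp{C}$ is a globular extension under $\Thz$, and $C \to \glcomp{C}$ is a functor under $\Thz$.

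For the universal property, let $D$ be a globular extension under $\Thz$ and $g : C \to D$ a functor under $\Thz$. Since the structure functor of $D$ is globular, $g$ sends each $\lambda_T$ to a colimiting cocone of $D$. I would extend $g$ along $C \to \glcomp{C}$ stepwise. Given a functor $g_n : C_n \to D$ restricting to $g$ on $C$, one has $g_n(j(S_T)) = g(j(S_T)) = \mathrm{colim}(g \circ D_T)$, so $g_n$ carries each cocone $D_T \Rightarrow X$ in $C_n$ to a cocone of $D$ induced by a unique morphism $g(j(S_T)) \to g_n(X)$; sending each freshly adjoined morphism to this unique morphism extends $g_n$ to a functor $g_{n+1} : C_{n+1} \to D$, the same uniqueness showing both that the imposed equalities hold in $D$ and that $g_{n+1}$ is the only such extension. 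Passing to the colimit produces the unique functor $\glcomp{C} \to D$ restricting to $g$; being under $C$, it is under $\Thz$, which is the triangle of the statement.

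The only genuinely non-formal point is the verification that the iteration does what is claimed, i.e.\ that in $\glcomp{C}$ the cocones $\lambda_T$ are colimiting and that $C \to \glcomp{C}$ has the stated universal property; this is a standard ``free relative cocompletion'' argument. One may instead bypass the explicit construction by observing that the category of globular extensions under $\Thz$ is locally presentable and that the forgetful functor to the category of categories under $\Thz$ preserves limits and is accessible, hence has a left adjoint, whose value at $C$ is $\glcomp{C}$. The remaining steps --- the reduction to forcing the $\lambda_T$, the identification of $\glcomp{C}$ as a globular extension under $\Thz$, and the extension of $g$ --- are routine.
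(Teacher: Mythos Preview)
Your proof is correct. The paper's own proof is merely a citation --- ``a special case of a standard categorical construction (see Proposition~3 of \cite{Ehresmann})'' --- and your iterative forcing of the cocones $\lambda_T$ to become colimiting is exactly the specialization of that construction to the present setting. The alternative route you sketch via the adjoint functor theorem is also sound. So your argument and the paper's coincide in substance, yours being explicit where the paper defers to the literature.

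One small caveat worth flagging: your opening identification --- that a globular extension under $\Thz$ is precisely a category under $\Thz$ in which each cocone $j(\lambda_T)$ is colimiting --- is not what the paper's definition literally says (which only asks that globular sums \emph{exist} in $C$, not that $j$ preserve them). Your reading is nonetheless the intended one: the paper's surrounding claims --- that the globular sum associated to a table of dimensions is ``uniquely defined,'' that functors under $\Thz$ ``automatically respect globular sums,'' and that $C \to \glcomp{C}$ is bijective on objects --- hold only under your interpretation, and the cited Ehresmann construction confirms it. Under the literal weaker reading your $\glcomp{C}$ would fail the universal property (for $D$ in which $j_D$ does not preserve globular sums, the extension $g_n \to g_{n+1}$ would have nowhere to send the adjoined morphisms), so it is worth making this unpacking explicit, as you do.
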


\begin{proof}
This is a special case of a standard categorical construction (see
Proposition 3 of \cite{Ehresmann}). See also Section 2.6 of \cite{AraThesis}
and paragraph 3.10 of \cite{MaltsiGrCat} for this particular case.
\end{proof}

\begin{tparagr}{Globular completion}
If $C$ is a category under $\Thz$, the globular extension $\glcomp{C}$ of
the previous proposition (which is unique up to a unique isomorphism) will
be called the \ndef{globular completion} of~$C$. Note that the functor $C
\to \glcomp{C}$ is bijective on objects.
\end{tparagr}

\begin{rem}
The theory of globular extensions is in some sense generated by
the category~$\G$ and the diagrams in $\G$ describing globular sums. More
precisely, starting from a category $I$ and a set $D$ of small diagrams in
$I$, there is an obvious generalization of the theory of globular extensions
to a theory of $(I, D)$-extensions. When $I$ is the terminal category and
$D$ is the set of diagrams describing finite sums, we obtain (up to a
variance issue) the theory of Lawvere theories. In this general setting, the
category~$\Theta_0$ (resp.~the globular extensions $C$ under $\Thz$ such
that $\Thz \to C$ is bijective on objects) plays the same role as a skeleton
of the category of finite sets (resp.~as Lawvere theories).
\end{rem}

\section{Grothendieck $\infty$-groupoids}

\begin{tparagr}{Globularly parallel arrows and liftings}
Let $C$ be a globular extension. If $f : \Dn{n} \to X$ is a morphism of $C$
whose source is a $\Dn{n}$, $n \ge 1$, then the \ndef{globular source}
(resp.~the \ndef{globular target}) of $f$ is the morphism $f\Ths{n} :
\Dn{n-1} \to X$ (resp.~$f\Tht{n} : \Dn{n-1} \to X$).

If $f, g : \Dn{n} \to X$ are two morphisms of $C$ whose source is a
$\Dn{n}$, $n \ge 0$, we will say that $f$ and $g$ are \ndef{globularly
parallel} if, either $n = 0$, or $n \ge 1$ and $f, g$ have the same globular
source and globular target.

Let now $(f, g) : \Dn{n} \to X$ be a pair of morphisms of $C$.
A \ndef{lifting} of the pair $(f, g)$ is a morphism $h : \Dn{n+1} \to X$
whose globular source is $f$ and whose globular target is $g$, that is, such
that the inner and the outer triangles of the diagram
\[
\xymatrix@C=3pc{
\Dn{n+1} \ar[dr]^h \\
\Dn{n} \ar@<.5ex>[u]^(.4){\Tht{n+1}} \ar@<-.5ex>[u]_(.4){\Ths{n+1}} \ar@<.5ex>[r]^f \ar@<-.5ex>[r]_g & X \\
}
\]
commute. The existence of such a lifting obviously implies that $f$
and $g$ are globularly parallel.
\end{tparagr}

\begin{ex}\label{ex:Toplift}
Let $C = \Top$. Two maps $f, g : \Dtop{n} \to X$ are globularly parallel if
their restrictions of the boundary $\eDtop{n}$ of $\Dtop{n}$ coincide, i.e.,
if they induce a map
\[ (f, g) : \eDtop{n+1} = \Dtop{n} \amalg_{\eDtop{n}} \Dtop{n} \to X. \]
A lifting of the pair $(f, g)$ corresponds to a lifting of the induced map
$\eDtop{n+1} \to X$ to $\Dtop{n+1}$, that is, to a map $h : \Dtop{n+1} \to X$
such that the triangle
\[
\xymatrix{
\Dtop{n+1} \ar[rd]^h \\
\eDtop{n+1} \ar[r]_-{(f,g)} \ar[u] & X\\
}
\]
commutes. Note that when $X$ is fixed, such an $h$ exists for every $n \ge
0$ and every $(f, g) : \Dtop{n} \to X$ globularly parallel if and only if
$X$ is weakly contractible.
\end{ex}

\begin{tparagr}{Admissible pairs}
Let $C$ be a globular extension.
A pair of morphisms 
\[ (f, g) : \Dn{n} \to S \]
of $C$ whose source is a
$\Dn{n}$, $n \ge 0$, is said to be \ndef{$(\infty, 0)$-admissible},
or briefly \ndef{admissible}, if
\begin{itemize}
\item the morphisms $f$ and $g$ are globularly parallel;
\item the object $S$ is a globular sum;
\item the dimension of $S$ is less than or equal to $n + 1$.
\end{itemize}
An admissible pair is \ndef{strictly admissible} if it does not admit a
lifting.
\end{tparagr}

\begin{tparagr}{Contractible globular extensions}
We will say that a globular extension $C$ is \ndef{$(\infty, 0)$\nbd-contractible},
or briefly \ndef{contractible}, if every admissible pair of $C$ admits a
lifting. Such a globular extension is called a pseudo-coherator in
\cite{MaltsiGrCat} and \cite{AraThesis}.
\end{tparagr}

\begin{tparagr}{$\infty$-groupoids of type~$C$}
Let $C$ be a contractible globular extension. The category of
\ndef{\oo-groupoids of type~$C$} is the category $\Mod{C}$ of globular
presheaves on $C$. We will denote it in a more suggestive way by
$\wgpdC{C}$.
\end{tparagr}

\begin{rem}
It might be more reasonable to define \oo-groupoids of type $C$ only when
objects of~$C$ are in bijection with tables of dimensions. We chose not to
do so for technical reasons (see for instance Proposition \ref{prop:PI_f}).
\end{rem}

\begin{exs}\label{exs:contr}
\ 
\begin{myenumerate}
\item\label{item:Top_contr}
The globular extension $\Top$ is contractible. Indeed, it is obvious
that every globular sum in $\Top$ is contractible in the topological sense
(see Proposition \ref{prop:glob_contr} for a proof in a more general
setting). The contractibility of $\Top$ then follows from the last assertion
of Example \ref{ex:Toplift}.

\item\label{item:Thtld_contr}
We will prove in \cite{AraStrWeak} that the globular extension $\Thtld$ is
contractible. More precisely, we will show that every admissible pair in
$\Thtld$ admits a \emph{unique} lifting. This will allow us to define a
canonical inclusion functor of strict \oo-groupoids into \oo-groupoids of
type $C$, where $C$ is a coherator endowed with a defining tower (see Remark
\ref{rem:incl_strict}).

\item
The globular extension $\Th$ is \emph{not} $(\infty, 0)$-contractible. Indeed,
the admissible pair $(\Tht{n}, \Ths{n})$, $n \ge 1$, does not admit a
lifting. This reflects the fact that globular presheaves on~$\Th$, i.e.,
strict \oo-categories, do not have inverses. Nevertheless, the globular
extension~$\Th$ is $(\infty, \infty)$-contractible in some sense (admissible
for a theory of \oo-categories in the terminology of \cite{MaltsiGrCat} and
\cite{AraThesis}). See Proposition 5.1.5 of \cite{AraThesis}.
\end{myenumerate}
\end{exs}

\begin{rems}
\ 
\begin{myenumerate}
\item 
The definition of admissible pairs differs from the one given in the
previous texts on Grothendieck \oo-groupoids: a dimensional condition has
been added. The purpose of this condition is to make unique the lifting of an
admissible pair in $\Thtld$ and hence to make canonical the inclusion
functor of strict \oo-groupoids into Grothendieck \oo-groupoids.

\item 
The category $\wgpdC{C}$ should \emph{not} be thought of as a category of
\emph{weak} \oo-groupoids unless $C$ satisfies some freeness condition.
Indeed, for $C = \Thtld$, the category $\wgpdC{C}$ is nothing but the
category of strict \oo-groupoids. One way to define weak \oo-groupoids
without defining this freeness condition would be to define the category
of weak \oo-group\-oids as the ``union'' of all the $\wgpdC{C}$'s, where $C$
varies among contractible globular extensions.
\end{myenumerate}
\end{rems}

\begin{tparagr}{Adding liftings to globular extensions}
Let $C$ be a globular extension and let~$A$ be a set of admissible pairs of
$C$. We will denote by $\CHAb$ the category obtained from $C$ by formally
adding a lifting $h_{(f, g)}$ to
every pair $(f, g)$ in $A$. More precisely, the category $\CHAb$ is the
category, endowed with a functor $C \to \CHAb$ such that the image of every
pair of $A$ admits a lifting in $\CHAb$, satisfying the following
universal property: for every category $D$, endowed with a functor $C \to D$
such that the image of every pair of~$A$ admits a lifting in $D$, there
exists a unique functor $\CHAb \to D$ such that the triangle
\[
\xymatrix@R=.2pc{
& \CHAb \ar[dd] \\
C \ar[ur] \ar[dr] \\
& D
}
\]
commutes.

The category $\CHAb$ is naturally a category under $\G$ but it has no
reason to be a globular extension. Let us assume that the globular
extension $C$ is a globular extension under $\Thz$. Then $\CHAb$ is also a
category under $\Thz$ and we can consider its globular completion
$\glcomp{\CHAb}$. We will denote this globular extension under $\Thz$ by
$\CHA$. The category $\CHA$ has the following universal property:
for every globular extension $D$ over~$\Thz$, endowed
with a functor $C \to D$ under $\Thz$ such that the
image of every admissible pair of $A$ admits a lifting in $D$, there
exists a unique functor $\CHA \to D$ such that the triangle
\[
\xymatrix@R=.2pc{
& \CHA \ar[dd] \\
C \ar[ur] \ar[dr] \\
& D
}
\]
commutes. Note that the functor $C \to \CHA$ is bijective on objects.
\end{tparagr}

\begin{tparagr}{Free globular extensions}
A \ndef{cellular tower} of globular extensions is a tower of globular
extensions
\[ C_0 = \Thz \to C_1 \to \dots C_n \to \dots\pbox{,} \]
endowed, for each $n \ge 0$, with a set $A_n$ of admissible pairs of $C_n$
such that
\[  C_{n+1} = \CHAd{C_n}{A_n}. \]
Such a tower is entirely defined by the $A_n$'s. We will say that a cellular
tower $(C_n, A_n)$ \ndef{defines a globular extension} $C$ if $C$ is
isomorphic to $\varinjlim C_n$. In this case, we will also say that
$(C_\ast, A_\ast)$ is a \ndef{defining tower} of $C$.

We will say that a globular extension $C$ is \ndef{free} if $C$ admits a
defining tower. Note that if $C$ is free, a defining tower of $C$ gives a
functor $\Thz \to C$ which is bijective on objects. 
\end{tparagr}

\begin{tparagr}{Coherators}
An \ndef{$(\infty, 0)$-coherator}, or briefly a \ndef{coherator}, is a
globular extension which is free and contractible.
\end{tparagr}

\begin{exs}
Let $(C_\ast, A_\ast)$ be a cellular tower. If the $A_n$'s
are such that every admissible pair of $C = \varinjlim C_n$ comes from an
$A_n$, then $C$ is a coherator. This remark allows us to define three
coherators:
\begin{itemize}
\item The canonical coherator: $A_n$ is the set of all admissible pairs of
$C_n$.
\item The reduced canonical coherator: $A_n$ is the set of all strictly
admissible pairs of~$C_n$.
\item The Batanin-Leinster coherator: $A_n$ is the set of the admissible pairs
that do not come from an $A_m$, $m \le n$, via the functor $C_m \to C_n$.
The name of this coherator comes from the relation it bears with
Batanin-Leinster \oo-categories (see Section~6.7 of \cite{AraThesis}).
\end{itemize}
\end{exs}

\begin{rem}
If $C$ is a coherator, the category of \oo-groupoids of type $C$ can be
thought of as a category of \emph{weak} \oo-groupoids. The fact that this
category depends on a coherator reflects the non-uniqueness of
the choice of generators for higher coherences. Nevertheless, if $C$ and $C'$
are two coherators, the category $\wgpdC{C}$ and $\wgpdC{C'}$ should be
equivalent in some weak sense to be defined. Note that Grothendieck's
conjecture (Conjecture \ref{conj:Groth}) implies that their homotopy
categories are equivalent.
\end{rem}

\begin{prop}\label{prop:coh_weak_init}
Let $C$ be a free globular extension. For any contractible globular
extension $D$, there exists a globular functor $C \to D$. 
\end{prop}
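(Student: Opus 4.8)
The plan is to exploit the defining tower of $C$ together with the universal property of the construction $C_n \mapsto \CHAd{C_n}{A_n}$, building the globular functor $C \to D$ one stage at a time and then passing to the colimit. So let $(C_\ast, A_\ast)$ be a defining tower of $C$, with $C_0 = \Thz$, $C_{n+1} = \CHAd{C_n}{A_n}$, and $C = \varinjlim C_n$. I will construct a compatible family of globular functors $F_n : C_n \to D$ under $\Thz$ by induction on $n$, and set $F = \varinjlim F_n$.

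For the base case, since $D$ is in particular a globular extension, Proposition~\ref{prop:pu_Thz} furnishes a globular functor $F_0 : \Thz \to D$ (under $\G$, hence under $\Thz$ by the very universal property that characterizes $\Thz$). For the inductive step, suppose $F_n : C_n \to D$ has been constructed, a globular functor under $\Thz$. Each pair $(f, g)$ in $A_n$ is an admissible pair of $C_n$; its image $(F_n f, F_n g)$ in $D$ is then a pair of globularly parallel morphisms into a globular sum of dimension $\le n+1$ — that is, an admissible pair of $D$. Since $D$ is contractible, this image admits a lifting in $D$. Hence $F_n : C_n \to D$ is a functor under $\Thz$ along which the image of every pair of $A_n$ admits a lifting, so the universal property of $\CHAd{C_n}{A_n} = C_{n+1}$ (stated in the paragraph ``Adding liftings to globular extensions'') yields a unique globular functor $F_{n+1} : C_{n+1} \to D$ restricting to $F_n$ along $C_n \to C_{n+1}$. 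The functors $F_n$ are then automatically compatible with the transition maps $C_n \to C_{n+1}$, so they assemble into a functor $F : C = \varinjlim C_n \to D$. It remains to check that $F$ is a morphism of globular extensions, i.e.\ respects globular sums: but $C$ is a globular extension under $\Thz$ (its structure map $\Thz \to C$ being the colimit of the $\Thz \to C_n$), and any functor under $\Thz$ between globular extensions under $\Thz$ automatically respects globular sums, as noted in the paragraph ``Globular extensions under $\Thz$''. This gives the desired globular functor $C \to D$.

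I expect the only genuinely delicate point to be the verification that the image of an admissible pair of $C_n$ under a \emph{globular} functor $F_n$ is again admissible in $D$ — specifically that the dimensional condition is preserved. This is where it matters that $F_n$ respects globular sums: globular parallelism is preserved by any functor under $\G$ (it is expressed purely in terms of the globes $\Dn{k}$ and the cosource/cotarget maps, which $F_n$ preserves), and a globular sum of dimension $\le n+1$ in $C_n$ is sent to the globular sum in $D$ associated to the same table of dimensions, which has the same dimension. So the three clauses in the definition of admissible pair transfer along any globular functor, and contractibility of $D$ does the rest. Everything else is a routine induction plus a colimit argument; the freeness of $C$ is used precisely to provide the tower along which this induction runs.
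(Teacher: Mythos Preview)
Your proof is correct and follows essentially the same route as the paper: choose a defining tower, use the universal property of $\Thz$ for the base case, then inductively extend via the universal property of $\CHAd{C_n}{A_n}$ using contractibility of $D$, and pass to the colimit. One small notational slip: in your inductive step you write that the target is ``a globular sum of dimension $\le n+1$,'' but here $n$ is the tower index, whereas the dimensional bound in the definition of admissibility refers to the dimension of the source globe of the pair (which is unrelated to the tower index); your final paragraph makes clear you understand the point, so this is only a clash of notation, not of content.
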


\begin{proof}
Let $(C_\ast, A_\ast)$ be a cellular tower defining $C$. By the universal
property of $\Thz$, the functor $\G \to D$ lifts to a globular functor $F_0 : C_0 =
\Thz \to D$. Suppose now by induction that we have a globular functor $F_n : C_n \to
D$. Every admissible pair of $A_n$ is sent to an admissible pair of $D$.
Since $D$ is contractible, every such pair admits a lifting, and, by the
universal property of $C_{n+1} = \CHAd{C_n}{A_n}$, we can lift $F_n$ to a
globular functor $F_{n+1} : C_{n+1} \to D$. We hence get a functor $F =
\varinjlim F_n : C \to D$ which is obviously globular.
\end{proof}

\begin{rems}\label{rem:incl_strict}
\ 
\begin{myenumerate}
\item 
The globular functor $C \to D$ of the previous proposition is not unique: it
depends on a cellular tower defining $C$ and on a choice of liftings. 
Nevertheless, if such a choice is made, the globular functor $C \to D$
becomes unique (up to a unique isomorphism).

\item
In particular, if $C$ is a coherator endowed with a defining tower and $D =
\Thtld$, by Example \ref{exs:contr}.\ref{item:Thtld_contr}, there exists a
unique functor $F : C \to \Thtld$ under $\Thz$. This functor induces a
functor from strict \oo-groupoids to \oo-groupoids of type $C$.  This is the
canonical inclusion functor of strict \oo-groupoids into \oo-groupoids of
type $C$.
\end{myenumerate}
\end{rems}

\section{Some structural maps of Grothendieck $\infty$-groupoids}
\label{sec:struct_maps}

\begin{paragr}
In this section, we fix a contractible globular extension $C$ and an
\oo-groupoid $G$ of type $C$. The purpose of the section is to convince the
reader that $G$ deserves to be called an \oo-groupoid. For this purpose, we
will explain how to construct structural maps (i.e., operations and
coherences) for $G$ out of $C$. More precisely, we will show that $G$ can be
endowed with compositions, units and inverses, and that these operations
satisfy the axioms of strict \oo-groupoids up to coherences. We will also
give examples of higher coherences between those coherences.
\end{paragr}

\begin{tparagr}{First example: codimension $1$
compositions}\label{paragr:first_ex}
Let $i \ge 1$. We will explain how to endow $G$ with a
composition of $i$-arrows in codimension $1$, i.e., with a map
\[ \comp_{i-1}^i : G_i \times^{}_{G_{i-1}} G_i \to G_i \]
sending $i$-arrows 
\[ x \xrightarrow{u} y \xrightarrow{v} z \]
to an $i$-arrow
\[ v \comp^i_{i-1} u : x \to z. \]

Let $p_1, p_2 : G_i \times^{}_{G_{i-1}} G_i \to G_i$ denote the canonical
projections. The conditions on the source and the target of $\comp_{i-1}^i$
can be rewritten in the following way:
\[
\Gls{i}\comp^i_{i-1} = \Gls{i}p_2
\quad\text{and}\quad
\Glt{i}\comp^i_{i-1} = \Glt{i}p_1.
\]
The important fact that will allow us to construct our map $\comp^i_{i+1}$
is that the maps $\Gls{i}p_2$ and $\Glt{i}p_1$ are induced by morphisms of
$C$. Indeed, denote by $\ceps{1}, \ceps{2} : \Dn{i} \to \Dn{i} \amalgd{i-1}
\Dn{i}$ the canonical morphisms. Since $G$ is a globular presheaf, the
canonical morphism 
\[ j : G(\Dn{i} \amalgd{i-1} \Dn{i}) \to G_i \times_{G_{i-1}} G_i \]
is a bijection and we have
\[
 G(\ceps{i}) = p_ij, \quad i = 1,2.
\]
It follows that
\[
G(\ceps{2}\Ths{i})j^{-1} = \Gls{i}p_2
\quad\text{and}\quad
G(\ceps{1}\Tht{i})j^{-1} = \Glt{i}p_1.
\]
Consider now the pair
\[
(\ceps{2}\Ths{i}, \ceps{1}\Tht{i}) : \Dn{i-1} \to \Dn{i} \amalgd{i-1}
\Dn{i}
\]
of morphisms of $C$. We claim that this pair is admissible. For $i = 1$,
there is nothing to check. For $i \ge 2$,
we have
\[
\ceps{2}\Ths{i}\Ths{i-1} = \ceps{2}\Tht{i}\Ths{i-1} =
\ceps{1}\Ths{i}\Ths{i-1} = \ceps{1}\Tht{i}\Ths{i-1}
\]
and
\[
\ceps{2}\Ths{i}\Tht{i-1} = \ceps{2}\Tht{i}\Tht{i-1} =
\ceps{1}\Ths{i}\Tht{i-1} = \ceps{1}\Tht{i}\Tht{i-1}.
\]
Since $C$ is contractible, this pair admits a lifting in $C$.
It follows that there exists a morphism
\[ \Thn[i-1]{i} : \Dn{i} \to \Dn{i} \amalgd{i-1} \Dn{i} \]
in $C$ such that
\[
\Thn[i-1]{i}\Ths{i} = \ceps{2}\Ths{i}
\quad\text{and}\quad
\Thn[i-1]{i}\Tht{i} = \ceps{1}\Tht{i}.
\]
This morphism will also be denoted by $\Thn{i}$. It induces a map
\[ \comp_{i-1}^i : G_i \times^{}_{G_{i-1}} G_i \xrightarrow{j^{-1}}
G(\Dn{i} \amalgd{i-1} \Dn{i}) \xrightarrow{G(\Thn{i})} G_i \]
which has the desired source and target. Indeed, we have
\[
\Gls{i}\comp^i_{i-1} = G(\Ths{i})G(\Thn{i})j^{-1} = G(\Thn{i}\Ths{i})j^{-1}
= G(\ceps{2}\Ths{i})j^{-1} = \Gls{i}p_2
\]
and
\[
\Glt{i}\comp^i_{i-1} = G(\Tht{i})G(\Thn{i})j^{-1} = G(\Thn{i}\Tht{i})j^{-1}
= G(\ceps{1}\Tht{i})j^{-1} = \Glt{i}p_1.
\]
Note that the composition $\comp^i_{i-1}$ depends on the choice of the lifting $\Thn{i}$.
Nevertheless, it is easy to show that this composition is unique up to
$(i+1)$-arrows. See Proposition \ref{prop:PI_indep} for details.
\end{tparagr}

\begin{tparagr}{The general pattern}
Let $i \ge 1$ and let \[ \tabdim  \] be a table of dimensions. Suppose we
want to construct a structural map
\[
m : G_{i_1} \times_{G_{i'_1}} \dots \times_{G_{i'_{n-1}}} G_{i_n} \to G_i
\]
such that 
\[
\Gls{i}m = f 
\quad\text{and}\quad
\Glt{i}m = g,
\]
where
\[  
f, g : G_{i_1} \times_{G_{i'_1}} \dots \times_{G_{i'_{n-1}}} G_{i_n}
\to G_{i-1}
\] 
are two fixed maps. To do so, we first have to find morphisms
\[
\varphi, \gamma :
\Dn{i-1} \to \Dn{i_1} \amalgd{i'_1} \dots \amalgd{i'_{n-1}} \Dn{i_n}
\]
in $C$ such that
\[
G(\varphi)j^{-1} = f
\quad\text{and}\quad 
G(\gamma)j^{-1} = g,
\] 
where
\[
j : G(\Dn{i_1} \amalgd{i'_1} \dots \amalgd{i'_{n-1}} \Dn{i_n}) \to
G_{i_1} \times_{G_{i'_1}} \dots \times_{G_{i'_{n-1}}} G_{i_n}
\]
denote the canonical morphism. We must then check that the pair $(\varphi,
\gamma)$ is admissible (this will be the case if the structural
map $m$ is ``reasonable''). Then, any lifting
\[
\mu : \Dn{i} \to\Dn{i_1} \amalgd{i'_1} \dots \amalgd{i'_{n-1}}
\Dn{i_n}
\]
in $C$ of the pair $(\phi, \gamma)$ will induce a map
\[
m : G_{i_1} \times_{G_{i'_1}} \dots \times_{G_{i'_{n-1}}} G_{i_n}
\xrightarrow{j^{-1}}
G(\Dn{i_1} \amalgd{i'_1} \dots \amalgd{i'_{n-1}} \Dn{i_n})
\xrightarrow{G(\mu)} G_i \]
with the desired source and target.
\end{tparagr}

\begin{paragr}
In the rest of this section, we will assume that our contractible globular
extension~$C$ is the canonical coherator. The canonical cellular tower
defining $C$ will be denoted by $(C_\ast)$. It should be clear to the reader
that all the structural morphisms that we will define in $C$ exist in every
contractible globular extension. Our exposition only uses the canonical
coherator to highlight the natural hierarchy between structural maps.
For instance, since the pair
\[
(\ceps{2}\Ths{i}, \ceps{1}\Tht{i}) : \Dn{i-1} \to \Dn{i} \amalgd{i-1}
\Dn{i}
\]
considered in paragraph \ref{paragr:first_ex} actually comes from $C_0 = \Thz$, it
admits a lifting in $C_1$. The composition $\comp^i_{i-1}$ is hence in some
sense a primary operation.
\end{paragr}

\begin{tparagr}{Examples of structural maps appearing in $C_1$}
\begin{myitemize}
\item \emph{Codimension $1$ compositions}\\
See paragraph \ref{paragr:first_ex}.

\item \emph{Units} \\
Let $i \ge 0$. The pair
\[ (\id{\Dn{i}}, \id{\Dn{i}}) : \Dn{i} \to \Dn{i} \]
of morphisms of $C_0$ is obviously admissible. Hence there exists a lifting
\[ \Thk{i} : \Dn{i+1} \to \Dn{i} \]
in $C_1$ such that
\[ \Thk{i}\Ths{i+1} = \id{\Dn{i}}
\quad\text{and}\quad
\Thk{i}\Tht{i+1} = \id{\Dn{i}}.
\]
This morphism induces a structural map
\[ \Glk{i} : G_i \to G_{i+1} \]
of $G$ for units. This map sends an $i$-arrow $u$ to an $(i+1)$-arrow
\[ \Glk{i}(u) : u \to u. \]
We will see that $\Glk{i}(u)$ is a unit for the
composition $\comp^{i+1}_i$ up to an $(i+2)$-arrow.

\item \emph{Codimension $1$ inverses} \\
Let $i \ge 1$. It follows from the coglobular identities that the pair
\[ (\Tht{i}, \Ths{i}) : \Dn{i-1} \to \Dn{i} \]
of morphisms of $C_0$ is admissible. Hence there exists a lifting
\[ \Thw[i-1]{i} : \Dn{i} \to \Dn{i} \]
in $C_1$ such that
\[
\Thw[i-1]{i}\Ths{i} = \Tht{i}
\quad\text{and}\quad
\Thw[i-1]{i}\Tht{i} = \Ths{i}.
\]
This morphism will also be denoted by $\Thw{i}$. It induces a structural map
\[ \Glw[i-1]{i} : G_i \to G_i \]
of $G$ for codimension $1$ inverses. This maps sends
an $i$-arrow $u : x \to y$
to an $i$-arrow 
\[ \Glw[i-1]{i}(u) : y \to x. \]
We will see that $\Glw[i-1]{i}(u)$ is an inverse of $u$ for the composition
$\comp^{i}_{i-1}$ up to an $(i+1)$-arrow.

\end{myitemize}
\end{tparagr}

\begin{rem}
The list of structural maps appearing in $C_1$ we have given above is not
exhaustive: $n$-ary compositions or operations mixing compositions, units
and inverses are other examples.
\end{rem}

\begin{tparagr}{Examples of structural maps appearing in $C_2$}
\label{paragr:coh_cat}
\begin{myitemize}
\item \emph{Codimension $2$ compositions} \\
Let $i \ge 2$. The pair
\[
((\ceps{1}\Ths{i}, \ceps{2}\Ths{i})\Thn{i-1},
(\ceps{1}\Tht{i}, \ceps{2}\Tht{i})\Thn{i-1}) :
\Dn{i-1} \to \Dn{i} \amalgd{i-2} \Dn{i}
\]
of morphisms of $C_1$,
where $\ceps{1}, \ceps{2} : \Dn{i} \to \Dn{i} \amalgd{i-2} \Dn{i}$
denote the canonical morphisms, is admissible. Indeed, we have
\[
\begin{split}
(\ceps{1}\Ths{i}, \ceps{2}\Ths{i})\Thn{i-1}\Ths{i-1}
& =
(\ceps{1}\Ths{i}, \ceps{2}\Ths{i})\ceps{2}\Ths{i-1} \\
& =
\ceps{2}\Ths{i}\Ths{i-1}
= \ceps{2}\Tht{i}\Ths{i-1} \\
& = 
(\ceps{1}\Tht{i}, \ceps{2}\Tht{i})\ceps{2}\Ths{i-1} \\
& =
(\ceps{1}\Tht{i}, \ceps{2}\Tht{i})\Thn{i-1}\Ths{i-1}.
\end{split}
\]
In the same way, we get
\[
(\ceps{1}\Ths{i}, \ceps{2}\Ths{i})\Thn{i-1}\Tht{i-1} =
\ceps{1}\Tht{i}\Tht{i-1} =
(\ceps{1}\Tht{i}, \ceps{2}\Tht{i})\Thn{i-1}\Tht{i-1}.
\]
Hence there exists a lifting
\[ \Thn[i-2]{i} : \Dn{i} \to \Dn{i} \amalgd{i-2} \Dn{i} \]
in $C_2$ such that
\[
\Thn[i-2]{i}\Ths{i} =
(\ceps{1}\Ths{i}, \ceps{2}\Ths{i})\Thn{i-1}
\quad\text{and}\quad
\Thn[i-2]{i}\Tht{i} = 
(\ceps{1}\Tht{i}, \ceps{2}\Tht{i})\Thn{i-1}.
\]
This morphism induces a structural map
\[ \comp_{i-2}^i : G_i \times^{}_{G_{i-2}} G_i \to G_i \]
of $G$ for codimension $2$ composition. This map sends
$i$-arrows
\[
\UseAllTwocells
\xymatrix@C=3pc{
x \rtwocell^u_v{\,\alpha}
&
y \rtwocell^{u'}_{v'}{\,\,\alpha'}
&
z
}
\]
to an $i$-arrow 
\[
\UseAllTwocells
\alpha' \comp^i_{i-2} \alpha :
\xymatrix@C=3pc{
y \rtwocell^{<1.5>u' \comp^{i-1}_{i-2} u}_{<1.5>v' \comp^{i-1}_{i-2} v}
&
x \pbox{.}
}
\]

\item \emph{Codimension $2$ inverses} \\
Let $i \ge 2$. The pair 
\[ (\Ths{i}\Thw{i-1}, \Tht{i}\Thw{i-1}) : \Dn{i-1} \to \Dn{i} \]
of morphisms of $C_1$ is admissible. Indeed, we have
\[ \Ths{i}\Thw{i-1}\Ths{i-1} = \Ths{i}\Tht{i-1} = \Tht{i}\Tht{i-1} =
\Tht{i}\Thw{i-1}\Ths{i-1}. \]
In the same way, we get
\[ \Ths{i}\Thw{i-1}\Tht{i-1} = \Ths{i}\Ths{i-1} = \Tht{i}\Thw{i-1}\Tht{i-1}. \]
Hence there exists a lifting
\[
\Thw[i-2]{i} : \Dn{i} \to \Dn{i}
\]
in $C_2$ such that
\[
\Thw[i-2]{i}\Ths{i} = \Ths{i}\Thw{i-1}
\quad\text{and}\quad
\Thw[i-2]{i}\Tht{i} = \Tht{i}\Thw{i-1}.
\]
This morphism induces a structural map
\[ \Glw[i-2]{i} : G_i \to G_i \]
of $G$ for codimension $2$ inverses. This map sends an
$i$-arrow
\[
\UseAllTwocells
\xymatrix@C=3pc{
x \rtwocell^u_v{\,\alpha}
&
y
}
\]
to an $i$-arrow
\[
\Glw[i-2]{i}(\alpha) :
\UseAllTwocells
\xymatrix@C=3pc{
y \rtwocell^{<1.5>\Glw[i-2]{i-1}(u)}_{<1.5>\Glw[i-2]{i-1}(v)}
&
x \pbox{.}
}
\]
We will see that $\Glw[i-2]{i}(\alpha)$ is an inverse of $\alpha$ for the
composition $\comp^i_{i-2}$ up to an $(i+1)$-arrow.

\item \emph{Codimension $1$ associativity constraints} \\
Let $i \ge 1$. The pair 
\[ \Big(\big(\Thn{i}\amalgd{i-1} \id{\Dn{i}}\big)\Thn{i}, 
\big(\id{\Dn{i}} \amalgd{i-1} \Thn{i}\big)\Thn{i}\Big) :
\Dn{i} \to \Dn{i}\amalgd{i-1}\Dn{i}\amalgd{i-1}\Dn{i} \]
of morphism of $C_1$ is admissible. Indeed, if
\[
\ceps{1}, \ceps{2}, \ceps{3}, : \Dn{i} \to \Dn{i} \amalgd{i-1} \Dn{i} \amalgd{i-1} \Dn{i}
\quad\text{and}\quad
\cepsp{1}, \cepsp{2} : \Dn{i} \to \Dn{i} \amalgd{i-1} \Dn{i}
\]
denote the canonical morphisms, then we have
\[
\begin{split}
\big(\Thn{i} \amalgd{i-1} \Dn{i}\big)\Thn{i}\Ths{i}
& = \big(\Thn{i} \amalgd{i-1} \Dn{i}\big)\cepsp{2}\Ths{i} 
= \ceps{3}\Ths{i} \\
& = (\ceps{2}, \ceps{3})\cepsp{2}\Ths{i}
= (\ceps{2}, \ceps{3})\Thn{i}\Ths{i} \\
& = \big(\Dn{i} \amalgd{i} \Thn{i}\big)\cepsp{2}\Ths{i} \\
& = \big(\Dn{i} \amalgd{i} \Thn{i}\big)\Thn{i}\Ths{i}.
\end{split}
\]
In the same way, we get
\[
\big(\Thn{i} \amalgd{i-1} \Dn{i}\big)\Thn{i}\Tht{i}
= \ceps{1}\Tht{i} \\
= \big(\Dn{i} \amalgd{i} \Thn{i}\big)\Thn{i}\Tht{i}.
\]
Hence there exists a lifting
\[
 \alpha^{}_i : \Dn{i+1} \to \Dn{i}\amalgd{i-1}\Dn{i}\amalgd{i-1}\Dn{i} \]
in $C_2$ such that
\[
\alpha^{}_i\Ths{i+1} = 
\big(\Thn{i}\amalgd{i-1} \Dn{i}\big)\Thn{i}
\quad\text{and}\quad
\alpha^{}_i\Tht{i+1} = 
\big(\Dn{i} \amalgd{i-1} \Thn{i}\big)\Thn{i}.
\]
This morphism induces a structural map 
\[
a_i : G_i \times_{G_{i-1}} G_i \times_{G_{i-1}} G_i \to G_{i+1}
\]
of $G$ for associativity constraints for the composition $\comp^i_{i-1}$.
This map sends $i$-arrows
\[
\xymatrix{
\ar[r]^u &
\ar[r]^v &
\ar[r]^w &
}
\]
to an $(i+1)$-arrow
\[
a^{}_{w,v,u} : \big(w \comp^i_{i-1} v\big) \comp^i_{i-1} u \to
w \comp^i_{i-1} \big(v \comp^i_{i-1} u\big).
\]
This shows that the composition $\comp^i_{i-1}$ is associative up to
$(i+1)$-arrows.

\item \emph{Codimension $1$ unit constraints} \\
Let $i \ge 1$. Consider the pair 
\[
\big(\big(\id{\Dn{i}}, \Ths{i}\Thk{i-1}\big)\Thn{i}, \id{\Dn{i}}\big) :
\Dn{i} \to \Dn{i}
\]
of morphisms of $C_1$. (Note that the fact that the morphism
\[ \big(\id{\Dn{i}}, \Ths{i}\Thk{i-1}\big) : \Dn{i} \amalgd{i-1} \Dn{i} \to
\Dn{i} \]
is well-defined requires a calculation. We will skip these calculations in
this section.) We claim that this pair is admissible.  Indeed, if $\ceps{1},
\ceps{2} :
\Dn{i} \to \Dn{i} \amalgd{i-1} \Dn{i}$ denote the canonical morphisms, then
we have
\[
\big(\id{\Dn{i}}, \Ths{i}\Thk{i-1}\big)\Thn{i}\Ths{i}
=
\big(\id{\Dn{i}}, \Ths{i}\Thk{i-1}\big)\ceps{2}\Ths{i}
=
\Ths{i}\Thk{i-1}\Ths{i}
= 
\Ths{i}
\]
and
\[
\big(\id{\Dn{i}}, \Ths{i}\Thk{i-1}\big)\Thn{i}\Tht{i}
=
\big(\id{\Dn{i}}, \Ths{i}\Thk{i-1}\big)\ceps{1}\Tht{i}
=
\Tht{i}.
\]
Hence there exists a lifting
\[ \rho^{}_i : \Dn{i+1} \to \Dn{i} \]
in $C_2$ such that
\[
\rho^{}_i\Ths{i+1} = \big(\id{\Dn{i}},\Ths{i}\Thk{i-1}\big)\Thn{i}
\quad\text{and}\quad
\rho^{}_i\Tht{i+1} = \id{\Dn{i}}.
\]
This morphism induces a structural map
\[
r_i : G_i \to G_{i+1}
\]
of $G$ for right unit constraints for $\Glk{i-1}$. This map sends
an $i$-arrow $u : x \to y$ to an $(i+1)$-arrow
 \[
r^{}_u : u \comp^i_{i-1} \Glk{i-1}(x) \to u.
\]
This shows that $\Glk{i-1}(x)$ is a right unit for the composition
$\comp^i_{i-1}$ up to an $(i+1)$-arrow.

We get in a similar way a morphism
\[ \lambda^{}_i : \Dn{i+1} \to \Dn{i} \]
of $C_2$ inducing a codimension $1$ left unit constraint
\[
l_i : G_i \to G_{i+1}.
\]
We thus get an $(i+1)$-arrow
\[
l^{}_u : \Glk{i-1}(y) \comp^i_{i-1} u \to u.
\]
showing that $\Glk{i-1}(y)$ is a left unit for the composition
$\comp^i_{i-1}$ up to an $(i+1)$-arrow.

\item \emph{Codimension $1$ inverse constraints} \\
Let $i \ge 1$. The pair 
\[ \big(\big(\id{\Dn{i}}, \Thw{i}\big)\Thn{i}, \Tht{i}\Thk{i-1}\big) :
\Dn{i} \to \Dn{i} \]
of morphism of $C_1$ is admissible. Indeed,
if $\ceps{1}, \ceps{2} : \Dn{i} \to \Dn{i} \amalgd{i-1} \Dn{i}$
  denote the canonical morphisms, then we have
\[
\big(\id{\Dn{i}}, \Thw{i}\big)\Thn{i}\Ths{i}
=
\big(\id{\Dn{i}}, \Thw{i}\big)\ceps{2}\Ths{i}
=
\Thw{i}\Ths{i}
=
\Tht{i}
=
\Tht{i}\Thk{i-1}\Ths{i}
\]
and
\[
\big(\id{\Dn{i}}, \Thw{i}\big)\Thn{i}\Tht{i}
=
\big(\id{\Dn{i}}, \Thw{i}\big)\ceps{1}\Tht{i}
=
\Tht{i}
=
\Tht{i}\Thk{i-1}\Tht{i}.
\]
Hence there exists a lifting
\[
\delta^{}_i : \Dn{i+1} \to \Dn{i}
\]
in $C_2$ such that
\[
\delta^{}_i\Ths{i+1} = \big(\id{\Dn{i}}, \Thw{i}\big)\Thn{i}
\quad\text{and}\quad
\delta^{}_i\Tht{i+1} = \Tht{i}\Thk{i-1}.
\]
This morphism induces a structural map
\[
d_i : G_i \to G_{i+1}
\]
of $G$ for right inverse constraints for $\Glw[i-1]{i}$. This
map sends an $i$-arrow $u : x \to y$ to an $(i+1)$-arrow
\[
d^{}_u : u \comp^i_{i-1} \Glw[i-1]{i}(u) \to \Glk{i-1}(y).
\]
This shows that $\Glw[i-1]{i}(u)$ is a right inverse of $u$ for the composition
$\comp^i_{i-1}$ up to an \hbox{$(i+1)$}\nbd-arrow.

We get in a similar way a morphism
\[ \gamma^{}_i : \Dn{i+1} \to \Dn{i} \]
of $C_2$ inducing a left inverse constraint. We thus get an $(i+1)$-arrow
\[
g^{}_u : \Glw[i-1]{i}(u) \comp^i_{i-1} u \to \Glk{i-1}(x)
\]
showing that $\Glw[i-1]{i}(u)$ is a left inverse of $u$ for the composition
$\comp^i_{i-1}$ up to an \hbox{$(i+1)$}\nbd-arrow.
\end{myitemize}
\end{tparagr}

\begin{tparagr}{Examples of morphisms appearing in $C_3$}
\label{paragr:coh_bicat}
\begin{myitemize}
\item \emph{Codimension $1$ Mac Lane's pentagon constraints} \\
Let $i \ge 1$. Denote by
\[
 \ceps{1}, \dots, \ceps{4} :
 \Dn{i} \to \Dn{i} \amalgd{i-1} \Dn{i} \amalgd{i-1} \Dn{i} \amalgd{i-1} \Dn{i}
\]
the canonical morphisms. Let
$c_2 : \Dn{i+1} \to \Dn{i} \amalgd{i-1}
\Dn{i} \amalgd{i-1} \Dn{i} \amalgd{i-1} \Dn{i}$ be the morphism
\[
\Big(
\big(\Dn{i} \amalgd{i-1} \Dn{i} \amalgd{i-1} \Thn{i}\big)\alpha^{}_i,
\big(\Thn{i} \amalgd{i-1} \Dn{i} \amalgd{i-1} \Dn{i}\big)\alpha^{}_i
\Big)
\Thn{i+1}
\]
and let
$c_3 : \Dn{i+1} \to \Dn{i} \amalgd{i-1}
\Dn{i} \amalgd{i-1} \Dn{i} \amalgd{i-1} \Dn{i}$
be the morphism
\[
\begin{split}
&
\Big(
\big(\ceps{1}\Thk{i}, (\ceps{2}, \ceps{3},
\ceps{4})\alpha^{}_i\big)\Thn[i-1]{i+1},
\big(\Dn{i} \amalgd{i-1} \Thn{i} \amalgd{i-1} \Dn{i}\big)\alpha^{}_i,
\\
& \quad \big((\ceps{1}, \ceps{2},
\ceps{3})\alpha^{}_i,
\ceps{4}\Thk{i}\big)\Thn[i-1]{i+1}
\Big)
\Big(\Thn{i+1} \amalgd{i} \Dn{i+1}\Big)
\Thn{i+1}.
\end{split}
\]
The pair
\[
(c_3, c_2) : \Dn{i+1} \to \Dn{i} \amalgd{i-1}
\Dn{i} \amalgd{i-1} \Dn{i} \amalgd{i-1} \Dn{i}
\]
of morphisms of $C_2$ is admissible. Indeed,
if $\cepsp{1}, \cepsp{2} : \Dn{i+1} \to \Dn{i+1} \amalgd{i-1} \Dn{i+1}$
denote the canonical morphisms, then we have
\[
\begin{split}
c_3\Ths{i+1} & =
\big((\ceps{1}, \ceps{2}, \ceps{3})\alpha^{}_i,
\ceps{4}\Thk{i}\big)
\Thn[i-1]{i+1}\Ths{i+1} \\
& = \big((\ceps{1}, \ceps{2},
\ceps{3})\alpha^{}_i,
\ceps{4}\Thk{i}\big)
\big(\cepsp{1}\Ths{i+1}, \cepsp{2}\Ths{i+1}\big)\Thn{i} \\
& = \big((\ceps{1}, \ceps{2},
\ceps{3})\alpha^{}_i\Ths{i+1},
\ceps{4}\Thk{i}\Ths{i+1}\big)
\Thn{i} \\
& = \big((\ceps{1}, \ceps{2},
\ceps{3})
(\Thn{i} \amalgd{i-1} \Dn{i}) \Thn{i},
\ceps{4}\big)
\Thn{i} \\
& =
\big(\Thn{i} \amalgd{i-1} \Dn{i} \amalgd{i-1}
\Dn{i}\big)
\big(\Thn{i} \amalgd{i-1} \Dn{i}\big)
\Thn{i}\\
\end{split}
\]
and
\[
\begin{split}
c_2\Ths{i+1}
& =
\big(\Thn{i} \amalgd{i-1} \Dn{i} \amalgd{i-1}
\Dn{i}\big)\alpha^{}_i\Ths{i+1} \\
& =
\big(\Thn{i} \amalgd{i-1} \Dn{i} \amalgd{i-1}
\Dn{i}\big)
\big(\Thn{i} \amalgd{i-1} \Dn{i}\big)
\Thn{i}.\\
\end{split}
\]
A similar calculation shows that
\[ c_3\Tht{i+1} = 
\big(\Dn{i} \amalgd{i-1} \Dn{i} \amalgd{i-1}
\Thn{i}\big)
\big(\Dn{i} \amalgd{i-1} \Thn{i}\big)
\Thn{i} = c_2\Tht{i+1}.
\]
Hence there exists a lifting
\[ \pi_i : \Dn{i+2} \to \Dn{i} \amalgd{i-1} \Dn{i} \amalgd{i-1} \Dn{i}
\amalgd{i-1} \Dn{i} \]
in $C_3$ such that
\[
\pi_i\Ths{i+2} = c_3
\quad\text{and}\quad
\pi_i\Tht{i+2} = c_2.
\]
This morphism induces a structural map of $G$ for Mac Lane's pentagon
constraints for compositions $\comp^i_{i-1}$, $\comp^{i+1}_i$ and
$\comp^{i+1}_{i-1}$. This maps sends $i$-arrows
\[
\xymatrix{
\ar[r]^u &
\ar[r]^v &
\ar[r]^w &
\ar[r]^x &
}
\]
to an $(i+2)$-arrow
\renewcommand{\objectstyle}{\scriptstyle}
\newcommand\mynodes{\ifcase\xypolynode\or
      {\big(x \comp^i_{i-1} w\big) \comp^i_{i-1} \big(v \comp^i_{i-1}
      u\big)}
    \or
      \big(\big(x \comp^i_{i-1} w\big) \comp^i_{i-1}  v\big) \comp^i_{i-1}  u
    \or
      \big(x \comp^i_{i-1} \big(w \comp^i_{i-1}  v\big)\big) \comp^i_{i-1}  u
    \or
      x \comp^i_{i-1} \big(\big(w \comp^i_{i-1}  v\big) \comp^i_{i-1}
      u\big)
    \or
      x \comp^i_{i-1} \big(w \comp^i_{i-1} \big(v \comp^i_{i-1}
      u\big)\big)\pbox{.}
    \fi
  }%
\[
\begin{xy}/r7pc/:
\xypolygon5{~>{}~:{(1.9,0):(0,.5)::}
\txt{\ \ \strut\ensuremath{\mynodes}}}
\ar "2";"3"_{a_{x,w,v}\comp^{i+1}_{i-1}\Glk{i}(u)\;\quad}
\ar "3";"4"_{a_{x,w\comp^i_{i-1}v,u}} 
\ar "4";"5"_{\Glk{i}(x)\comp^{i+1}_{i-1}a_{w,v,u}}
\ar "2";"1"^{a_{x\comp^i_{i-1}w,v,u}}
\ar "1";"5"^{a_{x,w,v\comp^i_{i-1}u}}
\ar@{} "3";"1"|{\displaystyle\Rtarrow\limits^{ML_{x,w,v,u}}}
\end{xy}
\]

\item \emph{Codimension $1$ exchange constraints} \\
Let $i \ge 2$. Consider the pair
\[
\Big(
\big(\Thn[i-2]{i} \amalg^{}_{\Thn{i-1}} \Thn[i-2]{i}\big)\Thn{i},
\big(\Thn{i} \amalgd{i-2} \Thn{i}\big)\Thn[i-2]{i}
\Big) :
\Dn{i} \to \Dn{i} \amalgd{i-1} \Dn{i} \amalgd{i-2} \Dn{i} \amalgd{i-1} \Dn{i}
\]
of morphisms of $C_2$. (Note that the left morphism is not a
globular sum.  Nevertheless, it is not hard to prove that this sum exists
(its existence also follows from the fact that it is a generalized globular
sum in the sense of Section 2.5 of \cite{AraThesis}).
We claim that this pair is admissible. Indeed,
if $\ceps{1}, \dots, \ceps{4} :
\Dn{i} \to \Dn{i} \amalgd{i-1} \Dn{i} \amalgd{i-2} \Dn{i} \amalgd{i-1} \Dn{i}$
and $\cepsp{1}, \cepsp{2} : \Dn{i} \to \Dn{i} \amalgd{i-2} \Dn{i}$
denote the canonical morphisms, then we have
\[
\begin{split}
\big(\Thn[i-2]{i} \amalg^{}_{\Thn{i-1}} \Thn[i-2]{i}\big)\Thn{i}\Ths{i}
& =
\big((\ceps{1}, \ceps{3})\Thn[i-2]{i}, (\ceps{2},
\ceps{4})\Thn[i-2]{i}\big)\Thn{i}\Ths{i} \\
& =
(\ceps{2}, \ceps{4})\Thn[i-2]{i}\Ths{i} \\
& =
(\ceps{2}, \ceps{4})(\cepsp{1}\Ths{i}, \cepsp{2}\Ths{i})\Thn{i-1} \\
& = 
\big(\ceps{2}\Ths{i}, \ceps{4}\Ths{i}\big)\Thn{i-1} \\
\end{split}
\]
and
\[
\begin{split}
\big(\Thn{i} \amalgd{i-2} \Thn{i}\big)\Thn[i-2]{i}\Ths{i} 
& =
\big(\Thn{i} \amalgd{i-2} \Thn{i}\big)\big(\cepsp{1}\Ths{i}, \cepsp{2}\Ths{i}\big)\Thn{i-1}\\
& =
\big( (\ceps{1}, \ceps{2})\Thn{i}\Ths{i}, (\ceps{3}, \ceps{4})\Thn{i}\Ths{i}\big)\Thn{i-1} \\
& = 
\big(\ceps{2}\Ths{i}, \ceps{4}\Ths{i}\big)\Thn{i-1}.
\end{split}
\]
A similar calculation shows that
\[
\big(\Thn[i-2]{i} \amalg^{}_{\Thn{i-1}} \Thn[i-2]{i}\big)\Thn{i}\Tht{i}
=
\big(\ceps{1}\Tht{i}, \ceps{3}\Tht{i}\big)\Thn{i-1}
=
\big(\Thn{i} \amalgd{i-2} \Thn{i}\big)\Thn[i-2]{i}\Tht{i}.
\]
Hence there exists a lifting
\[ \nvepsilon_i : \Dn{i+1} \to 
\Dn{i} \amalgd{i-1} \Dn{i} \amalgd{i-2} \Dn{i} \amalgd{i-1} \Dn{i} \]
in $C_3$ such that
\[
 \nvepsilon_i\Ths{i+1} =
\big(\Thn[i-2]{i} \amalg^{}_{\Thn{i-1}} \Thn[i-2]{i}\big)\Thn{i}
 \quad\text{and}\quad
 \nvepsilon_i\Tht{i+1} =
\big(\Thn{i} \amalgd{i-2} \Thn{i}\big)\Thn[i-2]{i}.
\]
This morphism induces a structural map of $G$ for exchange constraints
for compositions $\comp^{i-1}_{i-2}$, $\comp^i_{i-1}$ and
$\comp^i_{i-2}$. This maps sends $i$-arrows
\[
\UseAllTwocells
\xymatrix@C=3pc{
\ruppertwocell{\,\alpha}
\rlowertwocell{\,\beta}
\ar[r]
&
\ruppertwocell{\,\gamma}
\rlowertwocell{\,\delta}
\ar[r]
&
}
\]
to an $(i+1)$-arrow
\[
  e_{\gamma,\delta,\beta.\alpha} :
\big(\delta \comp^i_{i-2} \beta\big)\comp^i_{i-1} \big(\gamma \comp^i_{i-2}
\alpha\big)
\to
\big(\delta \comp^i_{i-1} \gamma\big)\comp^i_{i-2} \big(\beta \comp^i_{i-1}
\alpha\big).
\]
This shows that the pair of compositions $(\comp^i_{i-2}, \comp^i_{i-1})$
satisfies the exchange law up to $(i+1)$-arrows.

\item \emph{Codimension $1$ triangle constraints} \\
Let $i \ge 1$. Denote by $\ceps{1}, \ceps{2} : \Dn{i} \to \Dn{i}
\amalgd{i-1} \Dn{i}$ the canonical morphisms. Let
$d_2 : \Dn{i+1} \to \Dn{i} \amalgd{i-1} \Dn{i}$
be the morphism
\[
\big(
(\ceps{1}\Thk{i}, \ceps{2}\lambda^{}_i)\Thn[i-1]{i+1},
(\ceps{1}, \ceps{1}\Ths{i}\Thk{i-1}, \ceps{2})\alpha^{}_i
\big)\Thn{i+1}
\]
and let
$d_1 : \Dn{i+1} \to \Dn{i} \amalgd{i-1} \Dn{i}$
be the morphism
\[
(\ceps{1}\rho^{}_i, \ceps{2}\Thk{i})\Thn[i-1]{i+1}.
\]
The pair
\[
 (d_2, d_1) : \Dn{i+1} \to \Dn{i} \amalgd{i-1} \Dn{i}
\]
of morphisms of $C_2$ is admissible. Indeed, we have
\[
\begin{split}
d_2\Ths{i+1} & =
(\ceps{1}, \ceps{1}\Ths{i}\Thk{i-1},
\ceps{2})\alpha^{}_i\Ths{i+1} \\
& =
(\ceps{1}, \ceps{1}\Ths{i}\Thk{i-1},
\ceps{2})(\Thn{i} \amalgd{i-1} \Dn{i})\Thn{i} \\
& =
\big((\ceps{1}, \ceps{1}\Ths{i}\Thk{i-1})\Thn{i},
\ceps{2}\big)\Thn{i} \\
& =
\big(\ceps{1}(\id{\Dn{i}}, \Ths{i}\Thk{i-1})\Thn{i}),
\ceps{2}\big)\Thn{i} \\
\end{split}
\]
and
\[
\begin{split}
d_1\Ths{i+1} & =
(\ceps{1}\rho^{}_i\Ths{i+1}, \ceps{2}\Thk{i}\Ths{i+1})\Thn{i} \\
& =
\big(\ceps{1}(\id{\Dn{i}}, \Ths{i}\Thk{i-1})\Thn{i}, \ceps{2}\big)\Thn{i}.
\end{split}
\]
Similarly, we have
\[
\begin{split}
d_2\Tht{i+1} & =
(\ceps{1}\Thk{i}, \ceps{2}\lambda^{}_i)\Thn[i-1]{i+1}\Tht{i+1} \\
& =
(\ceps{1}\Thk{i}\Tht{i+1},
\ceps{2}\lambda^{}_i\Tht{i+1})\Thn{i} \\
& =
(\ceps{1}, \ceps{2})\Thn{i} \\
& = \Thn{i}
\end{split}
\]
and
\[
\begin{split}
d_1\Tht{i+1} & =
(\ceps{1}\rho^{}_i\Tht{i+1}, \ceps{2}\Thk{i}\Tht{i+1})\Thn{i} \\
& =
(\ceps{1}, \ceps{2})\Thn{i} \\
& = \Thn{i}.
\end{split}
\]
Hence there exists a lifting
\[ \nu^{}_i : \Dn{i+2} \to \Dn{i} \amalgd{i-1} \Dn{i} \]
in $C_3$ such that
\[ \nu^{}_i\Ths{i+2} = d_2
\quad\text{and}\quad
\nu^{}_i\Tht{i+2} = d_1.
\]
This morphism induces a structural map of $G$ for triangle constraints for
compositions $\comp^i_{i-1}$, $\comp^{i+1}_i$ and $\comp^{i+1}_{i-1}$.
This map sends $i$-arrows 
\[ x \xrightarrow{u} y \xrightarrow{v} z \]
to an $(i+2)$-arrow
\[
  \xymatrix{
  \big(v \comp^i_{i-1} \Glk{i-1}(y) \big) \comp^i_{i-1} u
  \ar[dr]|{}="a"_{r^{}_v \comp^{i+1}_{i-1} \Glk{i}(u)\quad}
  \ar[rr]^{a_{v,\Glk{i-1}(y), u}} & &
  v \comp^i_{i-1} \big(\Glk{i-1}(y) \comp^i_{i-1} u\big)
   \ar[dl]|{}="b"^{\quad\Glk{i}(v) \comp^{i+1}_{i-1} l^{}_u}
  \ar@{}"a";"b"|\Rtarrow^{T_{v,u}} \\
      & v \comp^i_{i-1} u \pbox{.}
  }
\]
\end{myitemize}

One can show in a similar way that there exists in $C_3$ morphisms
corresponding to the following structural maps:
\begin{itemize}
\item codimension $3$ compositions;
\item codimension $3$ inverses;
\item codimension $2$ associativity constraints;
\item codimension $2$ unit constraints;
\item codimension $2$ inverse constraints.
\end{itemize}
\end{tparagr}

\begin{rem}
The structural maps we have defined can be used to truncate $G$ to a
bicategory in which every arrow is weakly invertible (see Remark
\ref{rem:trunc} for more details).
\end{rem}

\begin{tparagr}{Examples of morphisms appearing in higher $C_n$'s}
One can show that there exists in $C_4$ morphisms corresponding to the
following structural maps:
\begin{itemize}
\item codimension $4$ compositions;
\item codimension $4$ inverses;
\item codimension $3$ associativity constraints;
\item codimension $3$ unit constraints;
\item codimension $3$ inverse constraints;
\item codimension $2$ Mac Lane's pentagon constraints;
\item codimension $2$ exchange constraints;
\item codimension $2$ triangle constraints;
\item codimension $1$ constraints on constraints appearing in $C_3$ (i.e.,
axioms for tricategories).
\end{itemize}

In general, in $C_n$ we have
\begin{itemize}
\item codimension $n$ compositions;
\item codimension $n$ inverses;
\item for every $k$ such that $1 \le k < n$, codimension $n - k$ constraints
on constraints appearing in $C_k$.
\end{itemize}
\end{tparagr}

\begin{tparagr}{Existence of pregroupoidal structures}
\label{paragr:pregroupoid}
Let $C$ be a contractible globular extension. From our previous analysis,
one easily obtains that $C$ can be endowed (in a non-canonical way) with the
structure of a \ndef{pregroupoidal globular extension} in the sense of
\cite{AraThtld}, that is, with morphisms
\[
\begin{split}
\Thn[j]{i} & : \Dn{i} \to \Dn{i} \amalgd{j} \Dn{i},\qquad i > j \ge 0, \\
\Thk{i} & : \Dn{i+1} \to \Dn{i},\qquad i \ge 0,\\
\Thw[j]{i} & : \Dn{i} \to \Dn{i}, \qquad i > j \ge 0,
\end{split}
\]
such that
\begin{enumerate}
\item\label{item:sbg_n} for every $i, j$ such that $i > j \ge 0$, we have
\[ 
\Thn[j]{i}\Ths{i} =
\begin{cases}
    \ceps{2}\Ths{i}, & j = i - 1, \\
    \big(\Ths{i} \amalgd{j} \Ths{i}\big)\Thn[j]{i-1}
    & j < i - 1,
\end{cases}
\]
and
\[
\Thn[j]{i}\Tht{i} =
\begin{cases}
    \ceps{1}\Tht{i}, & j = i - 1, \\
    \big(\Tht{i} \amalgd{j} \Tht{i}\big)\Thn[j]{i-1}
    & j < i - 1,
\end{cases}
\]
where $\ceps{1},\ceps{2} : \Dn{i} \to \Dn{i}\amalgd{i-1} \Dn{i}$
denote the canonical morphisms;
\item\label{item:sbg_k} for every $i \ge 0$, we have
\[ \Thk{i}\Ths{i+1} = \id{\Dn{i}} \quad\text{and}\quad \Thk{i}\Tht{i+1} =
\id{\Dn{i}}\text{;} \]
\item 
for every $i, j$ such that $i > j \ge 0$, we have
\[
\Thw[j]{i}\Ths{i} =
\begin{cases}
\Tht{i} & j = i - 1,\\
\Ths{i}\Thw[j]{i-1} & j < i - 1,
\end{cases}
\]
and
\[
\Thw[j]{i}\Tht{i} =
\begin{cases}
\Ths{i} & j = i - 1,\\
\Tht{i}\Thw[j]{i-1} & j < i - 1.
\end{cases}
\]
\end{enumerate}

Given a pregroupoidal globular extension structure on $C$, any \oo-groupoid
$G$ of type~$C$ is endowed with the structure of an \ndef{\oo-pregroupoids}
in the sense of \cite{AraThtld}, that is, with maps
\[
  \begin{split}
  \comp_j^i & : G_i \times_{G_j} G_i \to 
      G_i,\quad i > j \ge 0,
      \\
  \Glk{i} & : G_i \to G_{i+1}, \quad i \ge 0,\\
  \Glw[j]{i} & : G_i \to G_i, \quad i > j \ge 0,
  \end{split}
\]
such that
\begin{enumerate}
    \item 
      for every 
      $(v, u)$ in $G_i \times_{G_j} G_i$ with
      $i > j \ge 0$, we have
  \[
  \Gls{i}\big(v \comp_j^i u\big) = 
  \begin{cases}
    \Gls{i}(u), & j = i - 1, \\
    \Gls{i}(v) \comp_j^{i-1} \Gls{i}(u), & j < i - 1,
  \end{cases}
  \]
  and
\[
  \Glt{i}(v \comp_j^i u) = 
  \begin{cases}
    \Glt{i}(v), & j = i - 1, \\
    \Glt{i}(v) \comp_j^{i-1} \Glt{i}(u), & j < i - 1;
  \end{cases}
  \]
  \item for every $u$ in $G_i$ with $i \ge 0$, we have
  \[
  \Gls{i+1}\Glk{i}(u) = u = \Glt{i+1}\Glk{i}(u)\text{;}
  \] 
\item
for every $u$ in $G_i$ for $ i \ge 1$ and $j$ such that $i > j \ge
0$, we have
\[
\Gls{i}(\Glw[j]{i}(u)) = 
\begin{cases}
\Glt{i}(u), & j = i - 1,\\
\Glw[j]{i-1}(\Gls{i}(u)), & j < i - 1,
\end{cases}
\]
and
\[
\Glt{i}(\Glw[j]{i}(u)) = 
\begin{cases}
\Gls{i}(u), & j = i - 1,\\
\Glw[j]{i-1}(\Glt{i}(u)), & j < i - 1.
\end{cases}
\]
\end{enumerate}
For $i \ge j \ge 0$, we will denote by $\Glk[i]{j}$ the map from $X_j \to
X_i$ defined by
  \[ \Glk[i]{j} = \Glk{i-1}\cdots\Glk{j+1}\Glk{j}. \]
\end{tparagr}

\section{Weak equivalences of $\infty$-groupoids}

\begin{paragr}
In this section, we fix a contractible globular extension $C$ and an
\oo-groupoid $G$ of type $C$. Moreover, we \emph{choose} once and for all a
pregroupoidal globular extension structure on $C$. The \oo-groupoid $G$ is
thus endowed with the structure of an \oo-pregroupoid. We will use the same
notation for the pregroupoidal structure on $C$ and the structure of
\oo-pregroupoid on $G$ as in paragraph \ref{paragr:pregroupoid}.
\end{paragr}

\begin{tparagr}{Homotopy relation between $n$-arrows}
Let $u$ and $v$ be two $n$-arrows, $n \ge 0$, of $G$. A \ndef{homotopy} from
$u$ to $v$ is an $(n+1)$\nobreakdash-arrow from $u$ to $v$. If such a homotopy
exists, we will say that $u$ is homotopic to $v$ and we will write $u
\tildeh[n] v$.  Note that if $u$ is homotopic to $v$, then $u$ and $v$ are
parallel.
\end{tparagr}

\begin{lemma}
For every $n \ge 0$, the relation $\tildeh[n]$ is an equivalence relation.
Moreover, if $n \ge 1$, this relation is compatible with the composition
$\comp^n_{n-1}$.
\end{lemma}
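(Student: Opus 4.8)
The plan is to derive all four assertions directly from the structural maps carried by the \oo-pregroupoid structure fixed on $G$ (see paragraph~\ref{paragr:pregroupoid}), computing sources and targets each time by means of the axioms listed there. \emph{Reflexivity}: for an $n$-arrow $u$, I would take the unit $\Glk{n}(u)$, an $(n+1)$-arrow; the unit axiom gives $\Gls{n+1}\Glk{n}(u) = u = \Glt{n+1}\Glk{n}(u)$, so $\Glk{n}(u)$ is a homotopy $u \to u$. \emph{Symmetry}: given a homotopy $\alpha : u \to v$, I would apply the codimension~$1$ inverse $\Glw[n]{n+1}$; by the axiom for the source and target of an inverse in the case $j = i - 1$ (with $i = n+1$), the $(n+1)$-arrow $\Glw[n]{n+1}(\alpha)$ goes from $v$ to $u$. \emph{Transitivity}: given homotopies $\alpha : u \to v$ and $\beta : v \to w$, the equality $\Gls{n+1}(\beta) = v = \Glt{n+1}(\alpha)$ makes $(\beta, \alpha)$ a point of $G_{n+1} \times_{G_n} G_{n+1}$, and the axiom for the source and target of $\comp^i_j$ with $j = i - 1$ (here $i = n+1$) shows that $\beta \comp^{n+1}_n \alpha$ is a homotopy $u \to w$. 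This settles that $\tildeh[n]$ is an equivalence relation.

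For compatibility with $\comp^n_{n-1}$, I would assume $n \ge 1$, fix two composable pairs $(v, u)$ and $(v', u')$ in $G_n \times_{G_{n-1}} G_n$ together with homotopies $\alpha : u \to u'$ and $\beta : v \to v'$, and first check that $(\beta, \alpha)$ lies in $G_{n+1} \times_{G_{n-1}} G_{n+1}$: indeed $\Gls{n}\Gls{n+1}(\beta) = \Gls{n}(v) = \Glt{n}(u) = \Glt{n}(u') = \Glt{n}\Glt{n+1}(\alpha)$, using composability of $(v, u)$ and the fact that $u$ and $u'$ are parallel. As $n \ge 1$, the codimension~$2$ composition $\comp^{n+1}_{n-1}$ is among the available structural maps, so I may form the $(n+1)$-arrow $\beta \comp^{n+1}_{n-1} \alpha$; the axiom for the source and target of $\comp^i_j$ in the case $j < i - 1$ (here $i = n+1$, $j = n - 1$) then gives $\Gls{n+1}(\beta \comp^{n+1}_{n-1} \alpha) = v \comp^n_{n-1} u$ and $\Glt{n+1}(\beta \comp^{n+1}_{n-1} \alpha) = v' \comp^n_{n-1} u'$, so this arrow is the required homotopy from $v \comp^n_{n-1} u$ to $v' \comp^n_{n-1} u'$.

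I do not expect a genuine obstacle: the whole argument is bookkeeping with sources and targets. The only point needing care is to line up codimensions and composability conditions so that the structural map invoked at each step --- a codimension~$1$ inverse, a codimension~$1$ composite, or a codimension~$2$ composite --- really is one that a contractible globular extension provides; and it is precisely the existence of $\comp^{n+1}_{n-1}$, equivalently the inequality $n - 1 \ge 0$, that forces the restriction $n \ge 1$ in the last assertion.
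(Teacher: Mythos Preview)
Your proof is correct and follows essentially the same approach as the paper: reflexivity via $\Glk{n}$, symmetry via $\Glw[n]{n+1}$, transitivity via $\comp^{n+1}_n$, and compatibility via $\comp^{n+1}_{n-1}$. You spell out the composability checks and source/target computations more explicitly than the paper does, but the argument is the same.
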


\begin{proof}
Let $u$ be an $n$-arrow of $G$. The $(n+1)$-arrow $\Glk{n}(u)$ is a homotopy
from $u$ to $u$. The relation $\tildeh[n]$ is hence reflexive.

Let now $v$ be a second $n$-arrow of $G$ and let $h : u \to v$ be a
homotopy. The $(n+1)$-arrow $\Glw{n+1}(h)$ is a homotopy from $v$ to $u$. 
The relation $\tildeh[n]$ is hence symmetric.

Suppose now $w$ is a third $n$-arrow of $G$ and $k : v \to w$ is a second homotopy.
Then the $(n+1)$-arrow $k \comp^{n+1}_n h$ is a homotopy from $u$ to $w$.
The relation $\tildeh[n]$ is hence transitive.

Finally, suppose we have a diagram
\[
\UseAllTwocells
\xymatrix@C=3pc{
\rtwocell^u_{u'}{\,h}
&
\rtwocell^v_{v'}{\,k}
&
}
\]
in $G$, where single arrows are $n$-arrows with $n \ge 1$, and double arrows
are $(n+1)$-arrows. The $(n+1)$-arrow $k \comp^{n+1}_{n-1} h$ is a homotopy
from $v \comp^{n+1}_n u$ to $v' \comp^{n+1}_n u'$. The relation $\tildeh[n]$
is hence compatible with the composition $\comp^n_{n-1}$.
\end{proof}

\begin{tparagr}{The groupoid $\PI_n(G)$}
For $n \ge 0$, we will denote by $\overline{G_n}$ the quotient of $G_n$ by
the equivalence relation $\tildeh[n]$.

Let us now fix $n \ge 1$. The maps
\[
 \Gls{n}, \Glt{n} : G_n \to G_{n-1},
 \quad
 \Glk{n-1} : G_{n-1} \to G_n,
\]
induce maps
\[
 \Gls{n}, \Glt{n} : \overline{G_n} \to G_{n-1},
 \quad
 \Glk{n-1} : G_{n-1} \to \overline{G_n}.
\]
Moreover, by the previous lemma, the map
\[ \comp^n_{n-1} : G_n \times_{G_{n-1}} G_n \to G_n \]
induces a map
\[
 \comp^n_{n-1} : \overline{G_n} \times_{G_{n-1}} \overline{G_n} \to
 \overline{G_n}.
\]
We will denote by $\PI_n(G)$ the graph
\[
\xymatrix{
\overline{G_n} \ar@<.6ex>[r]^-{\Gls{n}} \ar@<-.6ex>[r]_-{\Glt{n}} &
G_{n-1},
}
\]
endowed with the maps
\[
 \comp^n_{n-1} : \overline{G_n} \times_{G_{n-1}} \overline{G_n} \to
\overline{G_n}
 \quad\text{and}\quad
 \Glk{n-1} : G_{n-1} \to \overline{G_n}.
\]
\end{tparagr}

\begin{prop}
For every $n \ge 1$, $\PI_n(G)$ is a groupoid.
\end{prop}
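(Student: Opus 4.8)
The plan is to verify directly that $\PI_n(G)$ satisfies the axioms of a small groupoid. The composition $\comp^n_{n-1}$ is already known to descend to a map $\overline{G_n}\times_{G_{n-1}}\overline{G_n}\to\overline{G_n}$ by the preceding lemma, and its compatibility with source and target is the case $j=n-1$ of the first axiom of paragraph \ref{paragr:pregroupoid}, which gives $\Gls{n}(v\comp^n_{n-1}u)=\Gls{n}(u)$ and $\Glt{n}(v\comp^n_{n-1}u)=\Glt{n}(v)$; likewise $\Gls{n}\Glk{n-1}(x)=x=\Glt{n}\Glk{n-1}(x)$ by the second axiom, so $\Glk{n-1}(x)$ is an endoarrow at the object $x$. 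Hence $\PI_n(G)$ is a graph equipped with a composition law and a distinguished endoarrow at each object, of the correct shape to be a category, and it remains only to check associativity, the unit laws, and invertibility of every class in $\overline{G_n}$.

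Associativity and the unit laws will follow from the associativity and unit constraints built in paragraph \ref{paragr:coh_cat}. Those constraints were written down there for the canonical coherator, but the admissible pairs defining them involve nothing beyond the pregroupoidal structure morphisms $\Thn{i}$, $\Thk{i}$, $\Thw{i}$ already fixed on $C$; since $C$ is contractible, each such pair admits a lifting in $C$ and the corresponding operation on $G$ exists. Concretely, the associativity constraint provides, for composable $n$-arrows $u,v,w$ of $G$, an $(n+1)$-arrow $(w\comp^n_{n-1}v)\comp^n_{n-1}u\to w\comp^n_{n-1}(v\comp^n_{n-1}u)$, so these two $n$-arrows are $\tildeh[n]$-equivalent; since $\comp^n_{n-1}$ is well-defined on $\overline{G_n}$, this yields associativity of $\PI_n(G)$. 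Similarly the right and left unit constraints give, for an $n$-arrow $u:x\to y$, $(n+1)$-arrows $u\comp^n_{n-1}\Glk{n-1}(x)\to u$ and $\Glk{n-1}(y)\comp^n_{n-1}u\to u$, so that $[\Glk{n-1}(x)]$ and $[\Glk{n-1}(y)]$ act as identities in $\overline{G_n}$. At this point $\PI_n(G)$ is a category.

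Finally, for invertibility I would use the codimension $1$ inverse $\Glw[n-1]{n}:G_n\to G_n$, which is part of the fixed pregroupoidal structure: by the third axiom of paragraph \ref{paragr:pregroupoid}, if $u:x\to y$ is an $n$-arrow then $\Glw[n-1]{n}(u):y\to x$, so $[\Glw[n-1]{n}(u)]$ is a morphism of $\PI_n(G)$ from $y$ to $x$. The right and left inverse constraints of paragraph \ref{paragr:coh_cat} (again available in $C$ by contractibility) supply $(n+1)$-arrows $u\comp^n_{n-1}\Glw[n-1]{n}(u)\to\Glk{n-1}(y)$ and $\Glw[n-1]{n}(u)\comp^n_{n-1}u\to\Glk{n-1}(x)$, whence $[u]\comp^n_{n-1}[\Glw[n-1]{n}(u)]=[\Glk{n-1}(y)]$ and $[\Glw[n-1]{n}(u)]\comp^n_{n-1}[u]=[\Glk{n-1}(x)]$ in $\overline{G_n}$; thus $[u]$ is invertible with inverse $[\Glw[n-1]{n}(u)]$, and $\PI_n(G)$ is a groupoid.

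The step requiring the most care is the one in the second paragraph: making sure each constraint operation (associativity, units, inverses) is genuinely available in an \emph{arbitrary} contractible globular extension endowed with a pregroupoidal structure, not just in the canonical coherator. This is settled by rereading the admissibility verifications in Section \ref{sec:struct_maps} and observing that they use only the defining relations of $\Thn{i}$, $\Thk{i}$ and $\Thw{i}$; everything else is the routine translation of ``there exists an $(n+1)$-arrow between two parallel $n$-arrows'' into ``equality in $\overline{G_n}$''.
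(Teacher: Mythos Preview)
Your proposal is correct and follows essentially the same approach as the paper: both proofs verify associativity, units, and inverses by invoking the constraint morphisms $\alpha_n$, $\lambda_n$, $\rho_n$, $\gamma_n$, $\delta_n$ built from the pregroupoidal structure, observing that the corresponding admissible pairs admit liftings in any contractible $C$. The paper simply restates the defining conditions on these liftings inline rather than pointing back to paragraph~\ref{paragr:coh_cat}, and it omits your preliminary check of source/target compatibility (which is indeed immediate from the pregroupoid axioms).
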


\begin{proof}
Let $u$, $v$ and $w$ be three $(n+1)$-arrows of $G$, composable in
codimension~$1$. Choose, as in paragraph \ref{paragr:coh_cat}, a morphism
\[
 \alpha^{}_n : \Dn{n+1} \to \Dn{n}\amalgd{n-1}\Dn{n}\amalgd{n-1}\Dn{n}
\]
of $C$ such that
\[
\alpha^{}_n\Ths{n+1} = 
\big(\Thn{n}\amalgd{n-1} \Dn{n}\big)\Thn{n}
\quad\text{and}\quad
\alpha^{}_n\Tht{n+1} = 
\big(\Dn{n} \amalgd{n-1} \Thn{n}\big)\Thn{n}.
\]
This morphism induces an $(n+1)$-arrow
\[
a^{}_{w,v,u} : \big(w \comp^n_{n-1} v\big) \comp^n_{n-1} u \to
w \comp^n_{n-1} \big(v \comp^n_{n-1} u\big),
\]
thereby proving the associativity of the composition $\comp^n_{n-1}$ up to
homotopy.

Let now $u : x \to y$ be an $n$-arrow.
Choose, as in paragraph \ref{paragr:coh_cat}, morphisms
\[
\lambda^{}_n : \Dn{n+1} \to \Dn{n}
\quad\text{and}\quad
\rho^{}_n : \Dn{n+1} \to \Dn{n},
\]
of $C$ such that
\[
\begin{split}
\lambda^{}_n\Ths{n+1} = \big(\Tht{n}\Thk{n-1}, \id{\Dn{n}}\big)\Thn{n}
\quad\text{and}\quad &
\lambda^{}_n\Tht{n+1} = \id{\Dn{n}}, \\
\rho^{}_n\Ths{n+1} = \big(\id{\Dn{n}},\Ths{n}\Thk{n-1}\big)\Thn{n}
\quad\text{and}\quad &
\rho^{}_n\Tht{n+1} = \id{\Dn{n}}. \\
\end{split}
\]
These morphisms induce $(n+1)$-arrows
\[
l^{}_u : \Glk{n-1}(y) \comp^n_{n-1} u \to u
\quad\text{and}\quad
r^{}_u : u \comp^n_{n-1} \Glk{n-1}(x) \to u,
\]
thereby proving that $\Glk{n-1}(x)$ is a unit up to homotopy.

Let us now prove that $\Glw{n}(u) : y \to x$ is an inverse of $u$ up to
homotopy. Choose, as in paragraph \ref{paragr:coh_cat}, morphisms
\[
 \gamma^{}_n : \Dn{n+1} \to \Dn{n}
 \quad\text{and}\quad
 \delta^{}_n : \Dn{n+1} \to \Dn{n}
\]
of $C$ such that
\[
\begin{split}
\gamma^{}_n\Ths{n+1} = \big(\Thw{n}, \id{\Dn{n}}\big)\Thn{n}
\quad\text{and}\quad &
\gamma^{}_n\Tht{n+1} = \Ths{n}\Thk{n-1}, \\
\delta^{}_n\Ths{n+1} = \big(\id{\Dn{n}}, \Thw{n}\big)\Thn{n}
\quad\text{and}\quad &
\delta^{}_n\Tht{n+1} = \Tht{n}\Thk{n-1}. \\
\end{split}
\]
These morphisms induce $(n+1)$-arrows
\[
g^{}_u : \Glw{n}(u) \comp^n_{n-1} u \to x
\quad\text{and}\quad
d^{}_u : f \comp^n_{n-1} \Glw{n}(u) \to y,
\]
thus ending the proof.
\end{proof}

\begin{prop}\label{prop:PI_indep}
The groupoid $\PI_n(G)$ does not depend on the choice of a pregroupoidal
globular structure on $C$.
\end{prop}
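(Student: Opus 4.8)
The plan is as follows. First I would note that the underlying reflexive graph of $\PI_n(G)$---with objects the set $G_{n-1}$, arrows the set $\overline{G_n}$, and source and target the maps $\Gls{n}, \Glt{n} : \overline{G_n} \to G_{n-1}$---is manifestly independent of the pregroupoidal structure, since these maps are induced by morphisms of $\G$. The pregroupoidal structure enters the data of $\PI_n(G)$ only through the composition $\comp^n_{n-1}$, which is induced by the chosen morphism $\Thn[n-1]{n} = \Thn{n}$ of $C$, and through the unit map $\Glk{n-1}$, which is induced by the chosen morphism $\Thk{n-1}$ of $C$. As inverses in a groupoid are uniquely determined by the composition and the units, it is enough to show that the maps induced on $\overline{G_n}$ do not depend on these two morphisms of $C$.

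For the unit map, I would let $\Thk{n-1}$ and $\Thkp{n-1}$ be two liftings in $C$ of the admissible pair $(\id{\Dn{n-1}}, \id{\Dn{n-1}})$, so that both are morphisms $\Dn{n} \to \Dn{n-1}$ with $\Thk{n-1}\Ths{n} = \id{\Dn{n-1}} = \Thk{n-1}\Tht{n}$, and likewise for $\Thkp{n-1}$. Then the pair $(\Thk{n-1}, \Thkp{n-1}) : \Dn{n} \to \Dn{n-1}$ is admissible: its components are globularly parallel (both composites with $\Ths{n}$ and with $\Tht{n}$ equal $\id{\Dn{n-1}}$), $\Dn{n-1}$ is a globular sum, and its dimension $n - 1$ is at most $n + 1$. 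Since $C$ is contractible, this pair admits a lifting $h : \Dn{n+1} \to \Dn{n-1}$. For every $(n-1)$-arrow $u$ of $G$, the $(n+1)$-arrow $G(h)(u)$ is then a homotopy from $\Glk{n-1}(u)$ to $\Glkp{n-1}(u)$, so $\Glk{n-1}(u)$ and $\Glkp{n-1}(u)$ have the same class in $\overline{G_n}$.

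For the composition the argument is the same. I would let $\Thn{n}$ and $\Thnp{n}$ be two liftings in $C$ of the admissible pair $(\ceps{2}\Ths{n}, \ceps{1}\Tht{n}) : \Dn{n-1} \to \Dn{n} \amalgd{n-1} \Dn{n}$, where $\ceps{1}, \ceps{2}$ are the canonical morphisms. The pair $(\Thn{n}, \Thnp{n}) : \Dn{n} \to \Dn{n} \amalgd{n-1} \Dn{n}$ is admissible: its components are globularly parallel by the defining relations of $\Thn{n}$ and $\Thnp{n}$, its target is a globular sum, and its dimension $n$ is at most $n + 1$. Let $H : \Dn{n+1} \to \Dn{n} \amalgd{n-1} \Dn{n}$ be a lifting and let $j : G(\Dn{n} \amalgd{n-1} \Dn{n}) \to G_n \times_{G_{n-1}} G_n$ be the canonical bijection. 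For $(v, u)$ in $G_n \times_{G_{n-1}} G_n$, the $(n+1)$-arrow $G(H)(j^{-1}(v, u))$ has for source the composite of $v$ and $u$ computed via $\Thn{n}$ and for target the composite computed via $\Thnp{n}$; it is thus a homotopy between them, and the two composites coincide in $\overline{G_n}$.

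Putting these together shows that the composition and unit maps of $\PI_n(G)$, and hence the groupoid $\PI_n(G)$ itself, are independent of the choice of a pregroupoidal globular structure on $C$. I do not expect any serious obstacle here: the only point needing a little care is checking that the two auxiliary pairs are admissible, and in particular that they satisfy the dimensional condition in the definition of an admissible pair, which is immediate in both cases.
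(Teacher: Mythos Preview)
Your proof is correct and follows essentially the same approach as the paper: both identify that only the choices of $\Thn{n}$ and $\Thk{n-1}$ enter into the definition of $\PI_n(G)$, and both show that any two such choices yield homotopic results by exhibiting the pair $(\Thn{n}, \Thnp{n})$ (respectively $(\Thk{n-1}, \Thkp{n-1})$) as admissible and taking a lifting. Your version is slightly more detailed in that you explicitly verify the dimensional condition in the definition of admissibility, which the paper leaves implicit.
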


\begin{proof}
The groupoid $\PI_n(G)$ depends a priori of the choice of
\[ 
\Thn{n} : \Dn{n} \to \Dn{n} \amalgd{n-1} \Dn{n}
\quad\text{and}\quad
\Thk{n-1} : \Dn{n} \to \Dn{n-1}.
\]
Let us show it does not.

Let
\[ 
\Thnp{n} : \Dn{n} \to \Dn{n} \amalgd{n-1} \Dn{n}
\]
be a morphism of $C$ with same globular source and target as $\Thn{n}$. Denote by
\[
\comp^{'n}_{n-1} : G_n \times_{G_{n-1}} G_n \to G_n
\]
the induced composition. Since the pair $(\Thn{n}, \Thnp{n}) : \Dn{n} \to
\Dn{n} \amalgd{n-1} \Dn{n}$ is admissible, there exists a lifting
\[
\mu : \Dn{n+1} \to \Dn{n} \amalgd{n-1} \Dn{n}
\]
in $C$ such that
\[
\mu\Ths{n+1} = \Thn{n}
\quad\text{and}\quad
\mu\Tht{n+1} = \Thnp{n}.
\]
If $u$ and $v$ are two $n$-arrows of $G$, composable in codimension
$1$, then $\mu$ induces an $(n+1)$-arrow
\[ m^{}_{v,u} :  v \comp^n_{n-1} u \to v \comp^{'n}_{n-1} u. \]
Hence the independence from $\Thn{n}$.

In the same way, if
$\Thkp{n-1} : \Dn{n} \to \Dn{n-1}$
is a morphism of $C$ with same globular source and target as $\Thk{n-1}$,
then the admissible pair $(\Thk{n-1}, \Thkp{n-1}) : \Dn{n} \to \Dn{n-1}$
admits a lifting from which we immediately get the independence from
$\Thk{n-1}$.
\end{proof}

\begin{rem}\label{rem:trunc}
For any $n \ge 1$, $\PI_n(G)$ is actually the truncation in dimension $1$ of
a bigroupoid $\PI^2_n(G)$, i.e., of a bicategory in which every $1$-arrow
is invertible up to a $2$\nobreakdash-arrow and every $2$-arrow is
invertible.  Let us briefly explain how to define this bigroupoid. The
underlying $2$-graph of $\PI^2_n(G)$ is
\[
\xymatrix{
\overline{G_{n+1}} \ar@<.6ex>[r]^-{\Gls{n+1}}
\ar@<-.6ex>[r]_-{\Glt{n+1}} & G_n \ar@<.6ex>[r]^-{\Gls{n}}
\ar@<-.6ex>[r]_-{\Glt{n}} & G_{n-1},
}
\]
and its structure maps are induced by a choice of maps $\Glk{n-1}$,
$\Glk{n}$, $\comp^n_{n-1}$, $\comp^{n+1}_{n-1}$, $\comp^{n+1}_n$, $a_n$,
$l_n$ and $r_n$ as in the previous section.
The axioms of bigroupoids follow from the existence in $C$ of
\begin{itemize}
\item codimension $1$ and $2$ associativity constraints;
\item codimension $1$ and $2$ unit constraints;
\item codimension $1$ pentagon constraints;
\item codimension $1$ triangle constraints;
\item codimension $1$ and $2$ inverse constraints.
\end{itemize}
One can easily show that $\PI^2_n(G)$ does not depend on the choice of the
above maps (up to biequivalence).

The bigroupoid $\PI^2_1(G)$ (resp.~the groupoid $\PI_1(G)$) can be thought
of as a truncation of $G$ in dimension $2$ (resp.~in dimension $1$).

The author has no doubt that a reader more patient then him could check that
a similar construction gives rise to a tricategory in the sense of
\cite{GPSTricat} in which every arrow is weakly invertible.
\end{rem}

\begin{tparagr}{The functor $\PI_n$}
Let $f : G \to H$ be a morphism of \oo-groupoids of type $C$. Such a
morphism induces a morphism of globular sets between the underlying globular
sets and in particular respects the notion of homotopy between $n$-arrows.
It follows that for any $n \ge 1$, $f$ induces a morphism of graphs
\[ \PI_n(f) : \PI_n(G) \to \PI_n(H). \]
The naturality of $f$ implies that $\PI_n(f)$ is actually a functor. We thus
get a functor
\[ \PI_n : \wgpdC{C} \to \Gpd,\]
where $\Gpd$ denotes the category of groupoids.
\end{tparagr}

\begin{tparagr}{Inverse image of globular functors}
If $F : D \to D'$ is a morphism of globular extensions, then the
precomposition by~$F$ defines a functor $\Mod{D'} \to \Mod{D}$ that we will
denote by $F^*$. In particular, if $D$ and~$D'$ are contractible, we get a
functor $F^* : \wgpdC{D'} \to \wgpdC{D}$. Since $F$ is a functor under $\G$,
the underlying globular sets of an \oo-groupoid $G$ of type $D'$ and
of~$F^\ast(G)$ coincide.
\end{tparagr}

\begin{prop}
Let $F: D \to D'$ be a morphism of contractible globular extensions. Then for
any $n \ge 1$, the triangle
\[
\xymatrix@C=1pc@R=1pc{
\wgpdC{D'} \ar[rr]^{F^*} \ar[dr]_{\PI_n} & & \wgpdC{D} \ar[dl]^{\PI_n} \\
& \Gpd \\
}
\]
commutes.
\end{prop}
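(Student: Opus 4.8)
The plan is to exploit that $F^\ast$ does not alter underlying globular sets, and to reduce to the case of $F$-compatible pregroupoidal structures by means of Proposition \ref{prop:PI_indep}. Let $G$ be an \oo-groupoid of type $D'$. As recalled in the paragraph on inverse images of globular functors, the underlying globular set of $F^\ast(G)$ coincides with that of $G$; in particular $(F^\ast G)_i = G_i$ for every $i$, with the same globular source and target maps. Since the homotopy relation $\tildeh[n]$ on $n$-arrows is defined purely in terms of the underlying globular set (it asks for an $(n+1)$-arrow with prescribed source and target), we get $\overline{(F^\ast G)_n} = \overline{G_n}$, and the maps $\Gls{n}, \Glt{n} \colon \overline{G_n} \to G_{n-1}$ induced by the globe morphisms agree for $F^\ast(G)$ and for $G$. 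Thus the underlying graphs of $\PI_n(F^\ast(G))$ and $\PI_n(G)$ are literally the same; what remains is to compare the two composition maps $\comp^n_{n-1}$ and the two unit maps $\Glk{n-1}$, which a priori depend on the pregroupoidal structures chosen on $D$ and on $D'$ respectively.

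Here is where I would invoke Proposition \ref{prop:PI_indep}: since neither $\PI_n(F^\ast(G))$ nor $\PI_n(G)$ depends on the chosen pregroupoidal structure, it suffices to compute them with a single pair of compatible choices. Start with any pregroupoidal globular extension structure on $D$, given by morphisms $\Thn{n}$, $\Thk{n-1}$ (and the higher $\Thn[j]{i}$, $\Thw[j]{i}$), and take its image under $F$ as the structure on $D'$. This is legitimate: $F$ is a globular functor under $\G$, so it preserves identities, the globe morphisms, composites, and carries the canonical injections of a globular sum of $D$ to those of the corresponding globular sum of $D'$; hence all the defining identities of paragraph \ref{paragr:pregroupoid} are preserved, and $F(\Thn{n})$, $F(\Thk{n-1})$, $\dots$ do form a pregroupoidal structure on $D'$.

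With these compatible choices the comparison is immediate. For $G$ over $D'$, the map $\comp^n_{n-1}$ on $\overline{G_n}$ is the one induced, via the recipe of paragraph \ref{paragr:first_ex}, by the morphism $F(\Thn{n})$ of $D'$; for $F^\ast(G)$ over $D$ it is induced by the morphism $\Thn{n}$ of $D$. But $(F^\ast G)(\Thn{n}) = G\bigl(F(\Thn{n})\bigr)$ by definition of $F^\ast$, and the canonical bijections $G(\Dn{n} \amalgd{n-1} \Dn{n}) \to G_n \times_{G_{n-1}} G_n$ used on each side coincide, because $F$ carries the globular sum $\Dn{n} \amalgd{n-1} \Dn{n}$ of $D$ to the globular sum $\Dn{n} \amalgd{n-1} \Dn{n}$ of $D'$ while $F^\ast$ leaves values at objects and morphisms of $\G$ unchanged. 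Hence the two maps $\comp^n_{n-1}$ on $\overline{G_n}$ are literally equal, and the same holds for $\Glk{n-1}$, induced by $F(\Thk{n-1})$ on the $D'$ side and by $\Thk{n-1}$ on the $D$ side, since again $(F^\ast G)(\Thk{n-1}) = G\bigl(F(\Thk{n-1})\bigr)$. Therefore $\PI_n(F^\ast(G)) = \PI_n(G)$ as groupoids. For a morphism $f$ of $\wgpdC{D'}$, the functor $\PI_n(f)$ depends only on the underlying globular-set morphism of $f$, which is also that of $F^\ast(f)$; hence $\PI_n(F^\ast(f)) = \PI_n(f)$ and the triangle commutes. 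The only point requiring genuine care is the bookkeeping of the two a priori unrelated pregroupoidal structures, and that is precisely what Proposition \ref{prop:PI_indep} lets us sidestep.
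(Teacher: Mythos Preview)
Your proof is correct and follows essentially the same route as the paper: both arguments observe that $F^\ast$ preserves underlying globular sets, then transport a choice of $\Thn{n}$ and $\Thk{n-1}$ from $D$ to $D'$ via $F$ and invoke Proposition~\ref{prop:PI_indep} to conclude that the resulting groupoid structures agree. The only cosmetic differences are that you push forward a full pregroupoidal structure rather than just the two morphisms needed for $\PI_n$, and you make the appeal to Proposition~\ref{prop:PI_indep} explicit; the paper also quietly notes (and you gloss over) that $F$ preserves globular sums only up to a harmless canonical isomorphism.
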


\begin{proof}
Let $G$ be an \oo-groupoid of type $D'$. Since $G$ and $F^*(G)$ have the
same underlying globular set, the underlying graphs of $\PI_n(G)$ and
$\PI_n(F^*(G))$ coincide. The groupoid structure of $\PI_n(F^*(G))$
is induced by a choice of morphisms
\[
\begin{split}
\Thn{n} & : \Dn{n} \to \Dn{n} \amalgd{n-1} \Dn{n}, \\
\Thk{n-1} & : \Dn{n} \to \Dn{n-1} \\
\end{split}
\]
of $D$ such that
\[
\begin{split}
\Thn{n}\Ths{n} = \ceps{2}\Ths{n}
\quad\text{and}\quad &
\Thn{n}\Tht{n} = \ceps{1}\Tht{n}, \\
\Thk{n-1}\Ths{n} = \id{\Dn{n-1}}
\quad\text{and}\quad &
\Thk{n-1}\Tht{n} = \id{\Dn{n-1}},
\end{split}
 \]
where $\ceps{1},\ceps{2} : \Dn{n} \to \Dn{n}\amalgd{n-1} \Dn{n}$
denote the canonical morphisms.
By applying $F$ to these morphisms, we get morphisms 
\[
\begin{split}
F(\Thn{n}) & : \Dn{n} \to \Dn{n} \amalgd{n-1} \Dn{n}, \\
F(\Thk{n-1}) & : \Dn{n} \to \Dn{n-1} \\
\end{split}
\]
of $D'$ such that
\[
\begin{split}
F(\Thn{n})\Ths{n} = \ceps{2}\Ths{n}
\quad\text{and}\quad &
F(\Thn{n})\Tht{n} = \ceps{1}\Tht{n}, \\
F(\Thk{n-1})\Ths{n} = \id{\Dn{n-1}}
\quad\text{and}\quad &
F(\Thk{n-1})\Tht{n} = \id{\Dn{n-1}}.
\end{split}
 \]
These morphisms can be used to define the groupoid structure of $\PI_n(G)$.
(Note that we have neglected some inoffensive canonical isomorphisms when
describing the target of $F(\Thn{n})$ and $F(\Thk{n-1})$.) But it is clear
that these two choices lead to the same groupoid structure.
\end{proof}

\begin{tparagr}{Homotopy groups of $\infty$-groupoids}
We define the set $\pi_0(G)$ of \ndef{connected components} of $G$ by
\[ \pi_0(G) = \pi_0(\PI_1(G)) = \overline{G_0}. \]

Let $n \ge 1$ and let $u, v$ be two parallel $(n-1)$-arrows of $G$. We will denote by
$\Hom_G(u, v)$ the set of $n$-arrows of $G$ from $u$ to $v$. We set
\[
\pi_n(G, u, v) = \Hom_{\PI_n(G)}(u, v)
\quad\text{and}\quad
\pi_n(G, u) = \pi_n(G, u, u).
\]
The set $\pi_n(G, u, v)$ is nothing but the quotient of $\Hom_G(u, v)$ by
the equivalence relation~$\sim_n$. Note that $\pi_n(G, u)$ is canonically
endowed with a group structure.

If $n \ge 1$ and $x$ is an object of $G$, we define the \ndef{$n$-th
homotopy group} of $(G, x)$ as
\[ \pi_n(G, x) = \pi_n(\Glk[n-1]{0}(x)). \]
The Eckmann-Hilton argument shows that for $n \ge 2$, the group
$\pi_n(G, x)$ is abelian.

From the fact that for every $n \ge 1$, $\PI_n$ is a functor from the
category of \oo-groupoids of type $C$ to groupoids, we get that
\begin{itemize}
\item $\pi_0$ is a functor from the category of \oo-groupoids of type $C$ to
the category of sets;
\item for all $n \ge 1$, $\pi_n$ is a functor from the category of
\oo-groupoids of type $C$ endowed with an $(n-1)$-arrow (or with an object)
to the category of groups.
\end{itemize}
\end{tparagr}

\begin{lemma}[Division lemma]\label{lemma:div}
Let $n \ge 2$ and let $i$ be an integer such that $0 \le i < n - 1$.
Let $u, v$ be a pair of parallel $(n-1)$-arrows of $G$ and let $\gamma : u'
\to v'$ be an $n$-arrow such that
\[
 \Gls[i]{n}(\gamma) = \Glt[i]{n-1}(u) = \Glt[i]{n-1}(v).
\]
Then the map
\[
\begin{split}
\Hom_G(u,v) & \to \Hom_G(u' \comp^{n-1}_i u, v' \comp^{n-1}_i v) \\
\alpha & \mapsto \gamma \comp^n_i \alpha
\end{split}
\]
induces a natural bijection
\[
\pi_n(G, u, v) \to \pi_n(G, u' \comp^{n-1}_i u, v' \comp^{n-1}_i v).
\]
\end{lemma}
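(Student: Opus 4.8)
The plan is to exhibit the map $[\alpha]\mapsto[\gamma\comp^n_i\alpha]$ as a bijection by producing an explicit two-sided homotopy inverse, exploiting that $\gamma$, being an $n$-arrow of an \oo-groupoid of type $C$, is weakly invertible for the composition $\comp^n_i$. I write $\bar\gamma=\Glw[i]{n}(\gamma)$ for a codimension $n-i$ inverse of $\gamma$ and $e=\Glk[n]{i}(\Gls[i]{n}(\gamma))$ for the relevant iterated unit. By the analysis of Section \ref{sec:struct_maps}, every contractible globular extension carries codimension $n-i$ associativity, unit, inverse and exchange constraints; these (in the guise of structural maps on $G$) are all that the argument uses, beyond the pregroupoidal structure of paragraph \ref{paragr:pregroupoid}.

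First I would check that the map descends to $\pi_n$ and is natural. If $\eta\colon\alpha\to\beta$ is an $(n+1)$-arrow between $n$-arrows $u\to v$, then, since $i<n-1<n$ and $\comp^{n+1}_i$ is compatible with globular source and target, the $(n+1)$-arrow $\Glk{n}(\gamma)\comp^{n+1}_i\eta$ is a homotopy from $\gamma\comp^n_i\alpha$ to $\gamma\comp^n_i\beta$; hence $[\gamma\comp^n_i\alpha]$ depends only on $[\alpha]$. Naturality in $G$ is then immediate, the map being assembled from the operation $\comp^n_i$ and the unit $\Glk{n}$, both preserved by morphisms of \oo-groupoids of type $C$.

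For injectivity, given an $(n+1)$-arrow $\eta\colon\gamma\comp^n_i\alpha\to\gamma\comp^n_i\alpha'$, I would form $\Glk{n}(\bar\gamma)\comp^{n+1}_i\eta$, conjugate it by the codimension $n-i$ associativity constraints to get an $(n+1)$-arrow $(\bar\gamma\comp^n_i\gamma)\comp^n_i\alpha\to(\bar\gamma\comp^n_i\gamma)\comp^n_i\alpha'$, then transport along the inverse constraint $\bar\gamma\comp^n_i\gamma\to e$ (composed in codimension $i$ with $\Glk{n}(\alpha)$, resp.~$\Glk{n}(\alpha')$) and the unit constraint $e\comp^n_i\alpha\to\alpha$ to obtain an $(n+1)$-arrow $\alpha\to\alpha'$; transitivity of $\tildeh[n]$ gives $[\alpha]=[\alpha']$.

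For surjectivity, given an $n$-arrow $\beta\colon u'\comp^{n-1}_i u\to v'\comp^{n-1}_i v$, the natural candidate preimage is $\bar\gamma\comp^n_i\beta$, but its globular source $\Glw[i]{n-1}(u')\comp^{n-1}_i(u'\comp^{n-1}_i u)$ and target are only $\tildeh[n-1]$-equivalent to $u$ and $v$, via $n$-arrows $p,q$ built from the codimension $n-i$ inverse, unit and associativity constraints. I would set $\alpha=q\comp^n_{n-1}(\bar\gamma\comp^n_i\beta)\comp^n_{n-1}p$ and verify $[\gamma\comp^n_i\alpha]=[\beta]$ by distributing $\gamma\comp^n_i(-)$ over the $\comp^n_{n-1}$-composite via the exchange constraint and then cancelling the resulting copies of $\gamma\comp^n_i p$ and $\gamma\comp^n_i q$ against the inverse and unit constraints; equivalently, since $\PI_n(G)$ is a groupoid, $p$ and $q$ become invertible there, so conjugation by them identifies $\pi_n$ of the adjusted endpoints with $\pi_n(G,u,v)$ and one transports $[\bar\gamma\comp^n_i\beta]$ back. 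The hard part will be precisely this surjectivity bookkeeping — tracking globular sources and targets through the constraint cells and checking that the adjustment cells cancel correctly once $\gamma\comp^n_i(-)$ is applied; the injectivity argument is essentially forced and dictates the shape of the whole proof, while well-definedness and naturality are routine.
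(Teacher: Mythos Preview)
There is a genuine gap in both your injectivity and surjectivity arguments, and it is precisely the point that makes the lemma non-trivial. You assume the existence of ``codimension $n-i$'' associativity, unit and inverse constraints as single $(n+1)$-arrows (respectively $n$-arrows) relating the naive expressions, but for codimension $>1$ these do not exist, because the arrows they would relate are not globularly parallel. Concretely, in your injectivity step the $n$-arrows $(\bar\gamma\comp^n_i\gamma)\comp^n_i\alpha$ and $\bar\gamma\comp^n_i(\gamma\comp^n_i\alpha)$ have $(n-1)$-sources $(\Glw[i]{n-1}(u')\comp^{n-1}_i u')\comp^{n-1}_i u$ and $\Glw[i]{n-1}(u')\comp^{n-1}_i(u'\comp^{n-1}_i u)$ respectively; since $i<n-1$, these are distinct $(n-1)$-arrows, so no $(n+1)$-arrow can go between your two $n$-arrows. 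Likewise, in your surjectivity step, the $(n-1)$-arrow $\Glw[i]{n-1}(u')\comp^{n-1}_i(u'\comp^{n-1}_i u)$ is not parallel to $u$ once $i<n-2$ (compute its $(n-2)$-source), so no $n$-arrow $p$ of the sort you posit exists. The phrase ``only $\tildeh[n-1]$-equivalent'' already presupposes parallelism, which fails here.

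The paper addresses this by correcting the source and target of $\bar\gamma\comp^n_i\beta$ \emph{dimension by dimension}: one builds, by induction on $j$ from $i+2$ to $n$, correction cells $c_j,d_j$ of dimension $j$ (not $n$) so that successive conjugation by $\Glk[n]{j}(c_j),\Glk[n]{j}(d_j)$ via $\comp^n_{j-1}$ forces agreement with $u,v$ up to level $j-1$. The crucial design choice is that the functions $C_j,D_j$ producing these cells are themselves induced by morphisms of $C$. This is what makes the rest of the argument short: once $L$ is defined this way, both $\alpha\mapsto\alpha$ and $\alpha\mapsto LK(\alpha)$ are induced by parallel morphisms $\Dn{n}\to\Dn{n}\amalgd{i}\Dn{n}$ of $C$, so contractibility of $C$ gives a single lifting witnessing $\overline{L}\,\overline{K}=\id$ with no coherence bookkeeping whatsoever. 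One then concludes by observing that $L$ is a composite of operations $\delta\comp^n_j(-)$ and $(-)\comp^n_j\delta$, each of which reflects homotopy by the injectivity of $\overline{K}$ just established; hence $\overline{L}$ is injective and therefore bijective. Your plan to verify $\overline{K}\,\overline{L}=\id$ directly via exchange constraints would, even after fixing the parallelism issue, require exactly the kind of cell-by-cell tracking that the paper's ``everything comes from $C$'' trick is designed to avoid.
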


\begin{proof}
We will denote by $K$ the map
\[
\begin{split}
\Hom_G(u,v) & \to \Hom_G(u' \comp^{n-1}_i u, v' \comp^{n-1}_i v) \\
\alpha & \mapsto \gamma \comp^n_i \alpha \pbox{.}
\end{split}
\]
If $\Lambda : \alpha \to \alpha'$ is a homotopy between
two $n$-arrows $\alpha, \alpha' : u \to v$, then
the $(n+1)$-arrow
\[ \Glk{n}(\gamma) \comp^{n+1}_i \Lambda :
 \gamma \comp^n_i \alpha \to \gamma \comp^n_i \alpha' \]
is a homotopy from $K(\alpha)$ to $K(\alpha')$. Hence the map $K$ induces a
map
\[
\overline{K} : \pi_n(G, u, v) \to \pi_n(G, u' \comp^{n-1}_i u, 
v' \comp^{n-1}_i v).
\]

We will construct a map
\[
L : \Hom_G(G, u' \comp^{n-1}_i u, v' \comp^{n-1}_i v) \to \Hom_G(u, v)
\]
inducing an inverse
\[
\overline{L} : \pi_n(G, u' \comp^{n-1}_i u, v' \comp^{n-1}_i v) \to \pi_n(G,
u, v)
\]
of $\overline{K}$. 

Our proof will be quite technical. For this reason, we start by
giving an idea of it. Let $\beta : u' \comp^{n-1}_i u \to v'
\comp^{n-1}_i v$. One could naively think that
$L'(\beta) = \Glw[i]{n}(\gamma) \comp^{n}_i \beta$ would induce an inverse
(that would be true if $G$ were a strict \oo-groupoid).
But the source (resp.~the target) of $L'(\beta)$ is
\[  \Glw[i]{n-1}(u') \comp^{n-1}_i \big(u' \comp^{n-1}_i u)\qquad
\text{(resp.~ $\Glw[i]{n-1}(v') \comp^{n-1}_i \big(v' \comp^{n-1}_i v)$).}
\]
In particular, $L'(\beta)$ does not belong to $\Hom_G(u, v)$.  Nevertheless,
the source (resp.~the target) of $L'(\beta)$ and $u$ (resp.~of $L'(\beta)$
and $v$) coincide in dimension~$i$, that is, we have
\[
\Gls[i]{n}(L'(\beta)) = \Gls[i]{n-1}(u)
\quad\text{and}\quad
\Glt[i]{n}(L'(\beta)) = \Glt[i]{n-1}(v).
\]
We will ``correct'' the source and target of $L'(\beta)$ dimension
by dimension. We will first construct $(i+2)$-arrows $c_{i+2}$ and $d_{i+2}$
such that the $n$-arrow
\[
\alpha_{i+2} =
\Glk[n]{i+2}(d_{i+2}) \comp^{n}_{i+1}
\big(
L'(\beta)
\comp^{n}_{i+1}
\Glk[n]{i+2}(c_{i+2})
\big)
\]
has the same source (resp.~the same target) as $u$
(resp.~as $v$) in dimension~$i+1$. 
By induction,
we will define $\alpha_j$ for $i < j \le n$ such that the source (resp.~the
target) of $\alpha_j$ and $u$ (resp.~and $v$) coincide in dimension $j-1$
($\alpha_{i+1}$ being $L'(\beta)$).
We will end up by defining $L(\beta)$ as the $n$-arrow
\[
\alpha_n =
d_n \comp^{n}_{n-1} 
\Big[
\Big[
\cdots
\Big(
\Glk[n]{i+2}(d_{i+2}) \comp^{n}_{i+1}
\Big(
L'(\beta)
\comp^{n}_{i+1}
\Glk[n]{i+2}(c_{i+2})
\Big)
\Big)
\cdots
\Big]
\comp^n_{n-1} c_n
\Big]
,
\]
where the $c_j$'s (resp.~the $d_j$'s) are the $j$-arrows ``correcting the
source (resp.~the target) of $L'(\beta)$ in dimension $j-1$.''

Here is how our proof is organized. First, we define the $c_j$'s (resp.~the
$d_j$'s) as functions~$C_j$ (resp.~$D_j$) of $u$ and $u'$ (resp.~of $v$ and
$v'$). To define these functions, we will define by mutual induction (on
$j$) the $C_j$'s (resp.~the $D_j$'s), their sources $C^-_j$ (resp.~$D^-_j$),
their targets $C^+_j$ (resp.~$D^+_j$) and functions $S_j$ (resp.~$T_j$) which
(as will be proved in the fourth step) are the sources (resp.~the targets)
of the $\alpha_j$'s. An important point is that all these functions come
from morphisms of $C$. This will allow us to get liftings from~$C$. Second,
we show that the pairs of morphisms of $C$ inducing the pairs
$(C^-_j, C^+_j)$ and $(D^-_j, D^+_j)$ are admissible. This is actually
needed by the induction step of the first point. Third, we define the
$\alpha_j$'s and we prove that their sources (resp.~their targets) are given by
the~$S_j$'s (resp.~the $T_j$'s). Fourth, we define $L$ and $\overline{L}$.
Fifth, we show that $\overline{L}$ is an inverse of~$\overline{K}$.

\medskip
\noindent1. \emph{Definition of the $C_j$'s and the $D_j$'s}
\nopagebreak
\smallskip

We define by induction on $j$ such that $i + 2 \le j \le n$ maps
\[
\begin{split}
S_j, T_j & : G_{n-1} \times_{G_i} G_{n-1} \to G_{n-1}, \\
C^-_j, C^+_j, D^-_j, D^+_j & : G_{n-1} \times_{G_i} G_{n-1} \to G_{j-1}, \\
C_j, D_j : & G_{n-1} \times_{G_i} G_{n-1} \to G_j,
\end{split}
\]
induced by morphisms of $C$.

We set
\[
\begin{split}
S_{i+2}(u', u) = & \Glw[i]{n-1}(u') \comp^{n-1}_i (u' \comp^{n-1}_i u), \\
T_{i+2}(v', v) = & \Glw[i]{n-1}(v') \comp^{n-1}_i (v' \comp^{n-1}_i v); \\
\end{split}
\]
for $j$ such that $i + 2 < j \le n$, we set
\[
\begin{split}
S_j(u', u) = & \Glk[n-1]{j-1}D_{j-1}(u', u) \comp^{n-1}_{j-2} \big(S_{j-1}(u',
u) \comp^{n-1}_{j-2} \Glk[n-1]{j-1}C_{j-1}(u', u)\big),\\
T_j(v', v) = & \Glk[n-1]{j-1}D_{j-1}(v', v) \comp^{n-1}_{j-2} \big(T_{j-1}(v',
v) \comp^{n-1}_{j-2} \Glk[n-1]{j-1}C_{j-1}(v', v)\big);
\end{split}
\]
for $j$ such that $i + 2 \le j \le n$, we set
\[
\begin{split}
C^-_j(u', u) = & \Gls[j-1]{n-1}(u), \\
C^+_j(u', u) = & \Gls[j-1]{n-1}S_j(u' ,u),\\
D^-_j(v', v) = & \Gls[j-1]{n-1}T_j(v' ,v),\\
D^+_j(v', v) = & \Glt[j-1]{n-1}(v). \\
\end{split}
\]
For our definition to be complete, we need to define the $C_j$'s and the
$D_j$'s. Let $j$ be such that $i + 2 \le j \le n$. By induction, the maps
\[ C^-_j, C^+_j : G_{n-1} \times_{G_i} G_{n-1} \to G_{j-1} \]
are induced by a pair of morphisms of $C$. We will prove in the second step
of the proof that this pair is admissible. Admitting this fact, we get a
lifting of the pair and so a map
\[
C_j : G_{n-1} \times_{G_i} G_{n-1} \to G_j
\]
such that
\[ 
\Gls{j}C_j = C^-_j
\quad\text{and}\quad
\Glt{j}C_j = C^+_j.
\]
In the same way, we get a map
\[
D_j : G_{n-1} \times_{G_i} G_{n-1} \to G_j
\]
induced by $C$, such that
\[ 
\Gls{j}D_j = D^-_j
\quad\text{and}\quad
\Glt{j}D_j = D^+_j.
\]

\medskip
\noindent2. \emph{The pairs $(C^-_j, C^+_j)$ and $(D^-_j, D^+_j)$ are
admissible}
\smallskip

Let us check that the pair of morphisms of $C$ inducing the pair $(C^-_j,
C^+_j)$ is admissible. It suffices to check that the $(j-1)$-arrows
$C^-_j(u', u)$ and $C^+_j(u', u)$ are parallel for every~$(u', u)$ in
$G_{n-1} \times_{G_i} G_{n-1}$.
(The reader not convinced by this assertion can either extract a direct
proof by dualizing our calculations, or read paragraph 5.3 of
\cite{AraThtld}.) Let~$(u', u)$ be in $G_{n-1} \times_{G_i} G_{n-1}$. For $j
= i + 2$, we have
\[
\begin{split}
 \Gls{i+1}C^+_{i+2}(u', u) & = 
\Gls{i+1}\Gls[i+1]{n-1}S_{i+2}(u', u) \\
 & =
  \Gls[i]{n-1}(\Glw[i]{n-1}(u') \comp^{n-1}_i (u' \comp^{n-1}_i u)) \\
 & = \Gls[i]{n-1}(u) = \Gls{i+1}\Gls[i+1]{n-1}(u) \\
 & = \Gls{i+1}C^-_{i+2}(u', u)
\end{split}
\]
and
\[
\begin{split}
 \Glt{i+1}C^+_{i+2}(u', u) & = 
 \Glt{i+1}\Gls[i+1]{n-1}S_j(u', u) \\
 & = \Glt[i]{n-1}(\Glw[i]{n-1}(u')
 \comp^{n-1}_i (u' \comp^{n-1}_i u)) \\
 & = \Glt[i]{n-1}\Glw[i]{n-1}(u') \\
 & = \Gls[i]{n-1}(u') = \Glt[i]{n-1}(u) = \Glt{i+1}\Gls[i+1]{n-1}(u)\\
 & = \Glt{i+1}C^-_{i+2}(u', u);
\end{split}
\]
and for $i + 2 < j \le n$, we have
\[
\begin{split}
\Gls{j-1}C^+_j(u', u) & = \Gls{j-1}\Gls[j-1]{n-1}S_j(u', u) \\
& =
\Gls[j-2]{n-1}
\big(\Glk[n-1]{j-1}D_{j-1}(u', u) \comp^{n-1}_{j-2} \big(S_{j-1}(u',
u) \comp^{n-1}_{j-2} \Glk[n-1]{j-1}C_{j-1}(u', u)\big)\big) \\
& = 
\Gls[j-2]{n-1}\Glk[n-1]{j-1}C_{j-1}(u', u) = \Gls{j-1}C_{j-1}(u', u) \\
& = C^-_{j-1}(u', u) = \Gls[j-2]{n-1}(u) = \Gls{j-1}\Gls[j-1]{n-1}(u) \\
& = \Gls{j-1}C^-_j(u', u)
\end{split}
\]
and
\[
\begin{split}
\Glt{j-1}C^+_j(u', u) & = \Glt{j-1}\Gls[j-1]{n-1}S_j(u', u) \\
& =
\Glt[j-2]{n-1}
\big(\Glk[n-1]{j-1}D_{j-1}(u', u) \comp^{n-1}_{j-2} \big(S_{j-1}(u',
u) \comp^{n-1}_{j-2} \Glk[n-1]{j-1}C_{j-1}(u', u)\big)\big) \\
& = 
\Glt[j-2]{n-1}\Glk[n-1]{j-1}D_{j-1}(u', u) = \Glt{j-1}D_{j-1}(u', u) \\
& = D^+_{j-1}(u', u) = \Glt[j-2]{n-1}(u) = \Glt{j-1}\Gls[j-1]{n-1}(u) \\
& = \Glt{j-1}C^-_j(u', u).
\end{split}
\]

Very similar calculations show that the $(j-1)$-arrows $D^-_j(v', v)$ and $D^+_j(v',
v)$ are parallel for every $(v', v)$ in $G_{n-1} \times_{G_i} G_{n-1}$.

\medskip
\noindent3. \emph{Definition of the $\alpha_j$'s and calculation of their
sources and targets}
\smallskip

Let $\beta : u' \comp^{n-1}_i u \to v' \comp^{n-1}_i v$. We define by
induction on $j$ such that $i <  j \le n$, an $n$-arrow~$\alpha_j$.  For $j
= i+1$, we set
\[ \alpha_{i+1} = \Glw[i]{n}(\gamma) \comp^n_i \beta. \]
For $j > i + 1$, we set
\[ \alpha_j = \Glk[n]{j}(d_j) \comp^n_{j-1} \big(\alpha_{j-1}
\comp^n_{j-1} \Glk[n]{j}(c_j)\big), \]
where
\[
c_j = C_j(u', u)
\quad\text{and}\quad
d_j = D_j(v', v).
\]
To show that our $\alpha_j$'s are well-defined, we have to show that
\[ 
c_j : \Gls[j-1]{n-1}(u) \to \Gls[j-1]{n}(\alpha_{j-1})
\quad\text{and}\quad
d_j : \Glt[j-1]{n}(\alpha_{j-1}) \to \Glt[j-1]{n-1}(v).
\]
We first show by induction on $j$ such that $i + 1 < j \le n$ that we have
\[ \Gls{n}(\alpha_{j-1}) = S_j(u, u'). \]
For $j = i + 2$, we have
\[
\begin{split}
 \Gls{n}(\alpha_{i+1}) & = 
 \Gls{n}(\Glw[i]{n}(\gamma) \comp^n_i \beta) \\
 & = \Gls{n}\Glw[i]{n}(\gamma) \comp^{n-1}_i \Gls{n}(\beta) \\
 & = \Glw[i]{n-1}\Gls{n}(\gamma) \comp^{n-1}_i \Gls{n}(\beta) \\
 & = \Glw[i]{n-1}(u') \comp^{n-1}_i \big(u' \comp^{n-1}_i u\big) \\
 & = S_{i+2}(u', u);
\end{split}
\]
and for $j > i + 1$, we have
\[
\begin{split}
\Gls{n}(\alpha_{j-1}) & =
\Gls{n}\big(\Glk[n]{j-1}(d_{j-1}) \comp^n_{j-2} \big(\alpha_{j-2}
\comp^n_{j-2} \Glk[n]{j-1}(c_{j-1})\big)\big)\\
& =
\big(\Glk[n-1]{j-1}(d_{j-1}) \comp^{n-1}_{j-2} \big(\Gls{n}\alpha_{j-2}
\comp^n_{j-2} \Glk[n-1]{j-1}(c_{j-1})\big)\big)\\
& =
\Glk[n-1]{j-1}D_{j-1}(u', u) \comp^{n-1}_{j-2} \big(S_{j-1}(u',
u) \comp^{n-1}_{j-2} \Glk[n-1]{j-1}C_{j-1}(u', u)\big)\\
& =
S_j(u', u).
\end{split}
\]
We hence have
\[
\begin{split}
\Gls{j}(c_j) & = \Gls{j}C_j(u', u) = C^-_j(u', u) \\
& = \Gls[j-1]{n-1}(u)
\end{split}
\]
and
\[
\begin{split}
\Glt{j}(c_j) & = \Glt{j}C_j(u', u) = C^+_j(u', u) \\
& = \Gls[j-1]{n-1}S_j(u', u) = \Gls[j-1]{n-1}\Gls{n}(\alpha_{j-1})\\
& = \Gls[j-1]{n}(\alpha_{j-1}),
\end{split}
\]
i.e.,
\[
c_j : \Gls[j-1]{n-1}(u) \to \Gls[j-1]{n}(\alpha_{j-1}).
\]
Very similar calculations give
\[
d_j : \Glt[j-1]{n}(\alpha_{j-1}) \to \Glt[j-1]{n-1}(v).
\]

\medskip
\noindent4. \emph{Definition of $L$ and $\overline{L}$}
\smallskip

Let $\beta : u' \comp^{n-1}_i u \to v' \comp^{n-1}_i v$ and let $\alpha =
\alpha_n$ be the $n$-arrow defined in the previous step. Explicitly, we have
\[
\alpha
= 
d_n \comp^{n}_{n-1} 
\Big[
\Big[
\cdots
\Big(
\Glk[n]{i+2}(d_{i+2}) \comp^{n}_{i+1}
\Big(
\Big(
\Glw[i]{n}(\gamma) \comp^{n}_i \beta
\Big)
\comp^{n}_{i+1}
\Glk[n]{i+2}(c_{i+2})
\Big)
\Big)
\cdots
\Big]
\comp^n_{n-1} c_n
\Big]
.
\]
Note that 
\[
\begin{split}
\Gls{n}(\alpha) & = \Gls{n}(c_n) = u, \\
\Glt{n}(\alpha) & = \Glt{n}(d_n) = v.
\end{split}
\]
We can thus define the map
\[
L : \Hom_G(G, u' \comp^{n-1}_i u, v' \comp^{n-1}_i v) \to \Hom_G(u, v)
\]
by sending $\beta$ to $\alpha$. The formula defining $\alpha$ is
clearly functorial in $\beta$ and the map $L$ thus induces a map
\[
\overline{L} : \pi_n(G, u' \comp^{n-1}_i u, v' \comp^{n-1}_i v)
\to \pi_n(G, u, v).
\]

\medskip
\noindent5. \emph{The map $\overline{L}$ is an inverse of $\overline{K}$}
\smallskip

We first prove that $\overline{L}$ is a left inverse of $\overline{K}$. 
Consider the maps
\[
\begin{split}
 G_n \times_{G_i} G_n & \to G_n \\
 (\alpha, \gamma) & \mapsto \alpha, \\
 (\alpha, \gamma) & \mapsto LK(\alpha).
\end{split}
\]
They are both induced by a morphism of $C$. Moreover, we already know that
$\alpha$ and~$LK(\alpha)$ are parallel. We thus get from $C$ an
$(n+1)$-arrow from $\alpha$ to $LK(\alpha)$, thereby proving that we have
\[ \overline{L}\,\overline{K} = \id{\pi_n(G, u, v)}. \]

Let us now show that $\overline{L}$ is injective. We have just shown
that $\overline{K}$ is injective. This means that for
every $n$-arrow $\delta$ and all $j$ such that $0
\le j < n - 1$, the right composition by $\delta$ in codimension $n - j$ (i.e., the operation
$\alpha \mapsto \alpha \comp^n_j \delta$) reflects the
property of being homotopic. Dually, the left composition by $\delta$
in codimension $n - j$ (i.e., the operation $\alpha \mapsto \delta
\comp^n_j \alpha$) reflects the property of being homotopic. But the map $L$
is obtained as a composition of such operations. Hence the map $L$ also
reflects the property of being homotopic. This exactly means that
$\overline{L}$ is injective.

It follows that $\overline{L}$ is bijective. Its left inverse $\overline{K}$
is hence an inverse.
\end{proof}

\begin{thm}\label{thm:pi_n_u_x}
Let $n \ge 1$ and let $u$ be an $(n-1)$-arrow of $G$.  Set $x =
\Gls[0]{n-1}(u)$.
There exists an isomorphism 
\[ \pi_n(G, u) \to \pi_n(G, x), \]
natural in $X$.
\end{thm}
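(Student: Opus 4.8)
The plan is to reduce $u$, one dimension at a time, to the iterated unit of $x$, using the division lemma (Lemma~\ref{lemma:div}) as the essential tool. First, for $n=1$ there is nothing to do: then $u$ is an object, $x = \Gls[0]{0}(u) = u$ and $\Glk[0]{0}(x) = x$. So assume $n \ge 2$. I would prove, by descending induction on $k$ from $n-1$ to $0$, the statement
\[
(\ast_k)\qquad \pi_n(G, u) \cong \pi_n\bigl(G, \Glk[n-1]{k}(\Gls[k]{n-1}(u))\bigr),\quad\text{naturally in }G.
\]
For $k = n-1$ this is the identity, since $\Gls[n-1]{n-1}$ and $\Glk[n-1]{n-1}$ are empty composites; and $(\ast_0)$ is exactly the asserted isomorphism, because $\Glk[n-1]{0}(\Gls[0]{n-1}(u)) = \Glk[n-1]{0}(x)$ and $\pi_n(G, x) = \pi_n(\Glk[n-1]{0}(x))$ by definition.

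For the inductive step, set $w = \Gls[k]{n-1}(u)$, a $k$-arrow, and consider the endo-$n$-arrow $\gamma = \Glk{n-1}\bigl(\Glk[n-1]{k}(\Glw{k}(w))\bigr)$, whose source and target both equal $z := \Glk[n-1]{k}(\Glw{k}(w))$. A routine computation with the pregroupoidal axioms of paragraph~\ref{paragr:pregroupoid} gives
\[
\Gls[k-1]{n}(\gamma) = \Glt{k}(w) = \Glt[k-1]{n-1}\bigl(\Glk[n-1]{k}(w)\bigr),
\]
so the division lemma applies with $i = k-1$ (and $0 \le k-1 < n-1$), yielding a natural isomorphism $\pi_n\bigl(G, \Glk[n-1]{k}(w)\bigr) \cong \pi_n\bigl(G, z \comp^{n-1}_{k-1} \Glk[n-1]{k}(w)\bigr)$. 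It then remains to identify this last group with $\pi_n\bigl(G, \Glk[n-1]{k-1}(\Gls{k}(w))\bigr)$, which, as $\Gls{k}(w) = \Gls[k-1]{n-1}(u)$, produces $(\ast_{k-1})$ from $(\ast_k)$. For this identification I would use two auxiliary facts. The first is an \emph{invariance} statement: if $a \tildeh[n-1] a'$ are $(n-1)$-arrows and $h : a \to a'$ is an $n$-arrow, then conjugation $\alpha \mapsto h \comp^n_{n-1} \bigl(\alpha \comp^n_{n-1} \Glw{n}(h)\bigr)$ induces a group isomorphism $\pi_n(G, a) \to \pi_n(G, a')$, by the same up-to-homotopy associativity, unit and inverse laws (paragraph~\ref{paragr:coh_cat}) used to prove that $\PI_n(G)$ is a groupoid. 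The second is the existence of a structural $n$-arrow of $G$ from $z \comp^{n-1}_{k-1} \Glk[n-1]{k}(w)$ to $\Glk[n-1]{k-1}(\Gls{k}(w))$: both sides are built from $w$ by the operations $\Glk$, $\Glw$, $\comp$, and are easily checked to be parallel (for instance by passing through $\Glk[n-1]{k}\bigl(\Glw{k}(w) \comp^k_{k-1} w\bigr)$, using compatibility of iterated units with composition and the codimension-$1$ inverse constraint $g_w$), so contractibility of $C$ provides such an arrow, exactly as in Section~\ref{sec:struct_maps}. Then the invariance statement, applied with $h$ this structural arrow, gives the desired isomorphism.

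Naturality in $G$ will be automatic: every isomorphism entering the telescope --- the division-lemma isomorphisms and the conjugation isomorphisms --- is induced by morphisms of $C$ together with the operations of the $\infty$-pregroupoid structure, all of which are respected by morphisms of $\infty$-groupoids of type $C$. I expect the overall strategy --- ``translate by the inverse'', familiar from the strict case --- to be unproblematic; the real work lies in the inductive step: the bookkeeping needed to verify the source/target hypotheses of the division lemma and the parallelism needed to extract the structural $n$-arrow at each stage, and, more seriously, organizing the homotopy-invariance isomorphism for $\pi_n$ so that the choices made (the auxiliary $n$-arrow $h$, which is fixed once and for all via the structural maps of $C$) are canonical enough for the whole composite to be natural. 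Care must also be taken with the degenerate cases ($n=1$, and the stages where an iterated composite is empty) so that the induction starts and ends correctly.
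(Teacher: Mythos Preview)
Your argument is correct, but it is considerably more elaborate than what the paper does. The paper does not descend through the dimensions one step at a time; instead it applies the division lemma (and its dual) \emph{once}, directly at $i=0$. Concretely, for $n\ge 2$ it observes the zig-zag of natural bijections
\[
\pi_n(G,u)\;\overset{\sim}{\longrightarrow}\;\pi_n\bigl(G,\,u\comp^{n-1}_0\Glk[n-1]{0}(x)\bigr)\;\overset{\sim}{\longleftarrow}\;\pi_n\bigl(G,\Glk[n-1]{0}(x)\bigr)=\pi_n(G,x),
\]
where the left arrow is the dual of Lemma~\ref{lemma:div} with $\gamma=\Glk[n]{0}(x)$ and the right arrow is Lemma~\ref{lemma:div} itself with $\gamma=\Glk{n-1}(u)$. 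That is the whole proof. In your version each inductive step needs both a division-lemma bijection and a separate conjugation-by-a-structural-arrow isomorphism to simplify $z\comp^{n-1}_{k-1}\Glk[n-1]{k}(w)$ down to an iterated unit; the paper avoids this entirely by composing with iterated \emph{units} rather than with inverses, so that no simplification step is needed. Your approach does buy something in principle---it makes explicit the ``shift by an inverse'' intuition from the strict case and would adapt if one only had the division lemma in codimension~$1$---but here the division lemma is already available in all codimensions, and the paper exploits that to compress the argument to a single zig-zag.
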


\begin{proof}
If $n = 1$, the result is tautological. For $n \ge 2$, by the previous lemma
and its dual, we have the following zig-zag of natural
isomorphisms:
\[
\pi_n(G, u) \overset\sim\to \pi_n(G, u \comp^{n-1}_0 \Glk[n-1]{0}(x)) 
\overset\sim\gets \pi_n(G, \Glk[n-1]{0}(x)) = \pi_n(G, x).
\]
\end{proof}

\begin{coro}
Let $n \ge 1$ and let $u : x \to y$ be a $1$-arrow. Then $u$ induces
an isomorphism
\[ \pi_n(G, x) \to \pi_n(G, y). \]
\end{coro}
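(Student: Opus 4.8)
The plan is to deduce the statement from Theorem~\ref{thm:pi_n_u_x} (together with its evident ``dual'' version), after disposing of the case $n=1$ by hand. For $n=1$ there is nothing new: by definition $\pi_1(G,x)=\Hom_{\PI_1(G)}(x,x)$ (since $\Glk[0]{0}(x)=x$), and since $\PI_1(G)$ is a groupoid the class of $u$ is an isomorphism $x\to y$ in $\PI_1(G)$; conjugation by it gives an isomorphism $\pi_1(G,x)\to\pi_1(G,y)$. This is exactly the classical statement for fundamental groupoids.

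For $n\ge 2$, the idea is to produce a single $(n-1)$-arrow whose $n$-th homotopy group is identified with $\pi_n(G,x)$ on one side and with $\pi_n(G,y)$ on the other. I would take $w=\Glk[n-1]{1}(u)$, the iterated unit of the $1$-arrow $u$ in dimension $n-1$ (for $n=2$ this is just $u$ itself). The first step is to check, using only the pregroupoid identities $\Gls{i+1}\Glk{i}(v)=v=\Glt{i+1}\Glk{i}(v)$ recalled in~\ref{paragr:pregroupoid}, that $\Gls[0]{n-1}(w)=\Gls{1}(u)=x$ and $\Glt[0]{n-1}(w)=\Glt{1}(u)=y$. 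Then Theorem~\ref{thm:pi_n_u_x} applied to the $(n-1)$-arrow $w$ gives an isomorphism $\pi_n(G,w)\to\pi_n(G,x)$, and the dual of Theorem~\ref{thm:pi_n_u_x} applied to the same $w$ gives an isomorphism $\pi_n(G,w)\to\pi_n(G,y)$. Composing the first with the inverse of the second yields the desired isomorphism $\pi_n(G,x)\to\pi_n(G,y)$, which is visibly determined by $u$; a routine check, using the functoriality of $\PI_n$ and the naturality assertions already present in Lemma~\ref{lemma:div} and Theorem~\ref{thm:pi_n_u_x}, shows that it is compatible with composition of $1$-arrows and with units.

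The only point that really requires attention — the closest thing to an obstacle — is the appeal to the ``dual'' of Theorem~\ref{thm:pi_n_u_x}: the theorem is phrased in terms of the iterated $0$-source $\Gls[0]{n-1}(u)$, whereas I also need the version with the iterated $0$-target. This costs essentially nothing: the proof of Theorem~\ref{thm:pi_n_u_x} applies verbatim with the roles of source and target (equivalently, of left and right composition) exchanged, via the zig-zag $\pi_n(G,w)\overset\sim\to\pi_n\big(G,\Glk[n-1]{0}(y)\comp^{n-1}_0 w\big)\overset\sim\gets\pi_n\big(G,\Glk[n-1]{0}(y)\big)=\pi_n(G,y)$, which is legitimate because Lemma~\ref{lemma:div} and its proof are symmetric in this respect (the proof explicitly records that both left and right composition by a fixed $n$-arrow reflect the homotopy relation). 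Once this dual statement is on the table, the corollary is immediate, so the bulk of the work is bookkeeping rather than genuine difficulty.
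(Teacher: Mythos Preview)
Your approach is correct and essentially the same as the paper's: the paper's one-line proof reads ``This follows from the previous theorem and its dual applied to $\Glk[n-1]{1}(u)$,'' which is exactly your $w$. Your separate treatment of $n=1$ via conjugation in the groupoid $\PI_1(G)$ is a small clarification (the paper's formula $\Glk[n-1]{1}(u)$ is not literally defined for $n=1$), and your remarks on the dual version and on compatibility with composition go slightly beyond what the paper spells out, but none of this deviates from the intended argument.
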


\begin{proof}
This follows from the previous theorem and its dual applied to
$\Glk[n-1]{1}(u)$.
\end{proof}

\begin{coro}\label{coro:pi_n_indep}
Let $n \ge 1$ and let $x$ be an object of $G$. The group $\pi_n(G, x)$ does
not depend on the choice of the pregroupoidal globular structure on $C$.
\end{coro}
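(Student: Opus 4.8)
The plan is to deduce the statement from Proposition~\ref{prop:PI_indep} and Theorem~\ref{thm:pi_n_u_x}. Fix two pregroupoidal globular extension structures on $C$ and decorate with a prime everything attached to the second one; the two groups to be compared are then $\pi_n(G, x) = \pi_n(G, w)$, where $w = \Glk[n-1]{0}(x)$, and $\pi'_n(G, x) = \pi'_n(G, w')$, where $w'$ is the iterated unit of $x$ computed with the primed structure. When $n = 1$ there is nothing to prove: $\Glk[0]{0}(x) = x$ involves no choice, and $\pi_1(G, x) = \Hom_{\PI_1(G)}(x, x)$ depends only on the groupoid $\PI_1(G)$, which is choice-independent by Proposition~\ref{prop:PI_indep}. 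So assume $n \ge 2$ from now on.

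The first step is to observe that, for any pair $(a, b)$ of parallel $(n-1)$-arrows of $G$, the set $\pi_n(G, a, b) = \Hom_{\PI_n(G)}(a, b)$ --- with its group structure in the case $a = b$ --- does not depend on the chosen pregroupoidal structure. Indeed, the relation $\tildeh[n]$, hence the quotient $\overline{G_n}$, is defined with no reference to that structure; and the argument proving Proposition~\ref{prop:PI_indep} shows that the remaining ingredients of the groupoid $\PI_n(G)$ --- the maps $\Gls{n}, \Glt{n}\colon \overline{G_n} \to G_{n-1}$, the unit $\Glk{n-1}\colon G_{n-1} \to \overline{G_n}$, and the composition $\comp^n_{n-1}$ on $\overline{G_n}$ --- are likewise insensitive to it. Hence $\PI_n(G) = \PI'_n(G)$ on the nose, and in particular $\pi'_n(G, a) = \pi_n(G, a)$ for every $(n-1)$-arrow $a$ of $G$.

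The second step is to check that $\Gls[0]{n-1}(w') = x$. This follows at once by iterating the identity $\Gls{i+1}\Glk{i} = \id{G_i}$ (which expresses that a unit $\Glk{i}(u)$ has source $u$), applied to the primed structure. The point worth stressing is that $\Gls[0]{n-1}$ belongs to the globular structure of $G$, not to any pregroupoidal structure, so this identity makes sense even though $w'$ was built from the primed data. We may therefore apply Theorem~\ref{thm:pi_n_u_x}, formulated with respect to the \emph{unprimed} pregroupoidal structure, to the $(n-1)$-arrow $u := w'$: since $\Gls[0]{n-1}(w') = x$, it furnishes an isomorphism
\[
\pi_n(G, w') \overset{\sim}{\longrightarrow} \pi_n(G, x) = \pi_n(G, w).
\]
Combined with the first step this gives
\[
\pi'_n(G, x) = \pi'_n(G, w') = \pi_n(G, w') \;\cong\; \pi_n(G, w) = \pi_n(G, x),
\]
which is the asserted independence.

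The only points requiring care are the identification $\PI_n(G) = \PI'_n(G)$ --- that is, extracting from the proof of Proposition~\ref{prop:PI_indep} that the groupoid itself, and not merely its isomorphism class, is choice-independent, so that $\pi_n(G, a, b)$ for \emph{fixed} parallel $(n-1)$-arrows $a, b$ is an absolute invariant --- together with the bookkeeping remark that the basepoint $x = \Gls[0]{n-1}(u)$ occurring in Theorem~\ref{thm:pi_n_u_x} is read off from the globular structure alone. Everything else is purely formal.
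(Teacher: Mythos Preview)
Your proof is correct and follows essentially the same approach as the paper's: the paper's one-line proof invokes Proposition~\ref{prop:PI_indep} (independence of $\PI_n(G)$, hence of $\pi_n(G,u)$ for any $(n-1)$-arrow $u$) together with Theorem~\ref{thm:pi_n_u_x}, and you have simply unpacked how these two ingredients combine. Your explicit bookkeeping --- identifying $\PI_n(G) = \PI'_n(G)$ literally rather than up to isomorphism, and applying Theorem~\ref{thm:pi_n_u_x} to $u = w'$ using that $\Gls[0]{n-1}(w') = x$ --- makes precise what the paper leaves implicit.
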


\begin{proof}
This follows from the previous theorem and Proposition \ref{prop:PI_indep}
(i.e., the analogous result for $\pi_n(G, u)$).
\end{proof}

\begin{rem}
We have only proved that $\pi_n(G, x)$ is unique up to a non-canonical
isomorphism. Indeed, the zig-zag appearing in the proof of Theorem
\ref{thm:pi_n_u_x} depends on~$\comp^{n-1}_0$, and hence on the choice of
$\Thn[0]{n-1}$. Nevertheless, one can show that the isomorphism of the
theorem does not depend on this choice (the proof of this fact is very
similar to the one of Lemma \ref{lemma:div}). We hence obtain that
$\pi_n(G, x)$ is unique up to a canonical isomorphism.
\end{rem}

\begin{tparagr}{Weak equivalences of $\infty$-groupoids}
We will say that a morphism $f : G \to H$ of \oo-groupoids of type $C$ is a
\ndef{weak equivalence} if the following conditions are satisfied:
\begin{itemize}
\item the map $\pi_0(f) : \pi_0(G) \to \pi_0(H)$ is a bijection;
\item for all $n \ge 1$ and every object $x$ of $G$, the morphism
$\pi_n(G, x) \to \pi_n(H, f(x))$, induced by $f$, is an isomorphism.
\end{itemize}
By the previous corollary, this definition does not depend on the choice of
a pregroupoidal globular structure on $C$.
\end{tparagr}

\begin{thm}\label{thm:equiv_we}
Let $f : G \to H$ be a morphism of \oo-groupoids of type $C$. The following
conditions are equivalent:
\begin{enumerate}
\item $f$ is a weak equivalence;
\item the map 
\[ \pi_0(f) : \pi_0(G) \to \pi_0(H) \]
 is a bijection, and for all
$n \ge 1$ and every $(n-1)$-arrow $u$ of $G$, the morphism $f$ induces an
isomorphism of groups
\[ \pi_n(G, u) \to \pi_n(G, f(u)); \]
\item the functor 
\[ \PI_1(f) : \PI_1(G) \to \PI_1(H) \]
is an equivalence of categories, and for every $n \ge 2$ and every pair $u, 
v$ of parallel $(n-1)$-arrows of $G$, the morphism $f$ induces a bijection
\[ \pi_n(G, u, v) \to \pi_n(G, f(u), f(v)); \]
\item the functor 
\[ \PI_1(f) : \PI_1(G) \to \PI_1(H) \]
is full and essentially surjective, and for every $n \ge 2$ and every pair $u, 
v$ of parallel $(n-1)$-arrows of $G$, the morphism $f$ induces a surjection
\[ \pi_n(G, u, v) \to \pi_n(G, f(u), f(v)). \]
\end{enumerate}
\end{thm}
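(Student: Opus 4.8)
The plan is to prove the four conditions equivalent through the chain $(1)\Leftrightarrow(2)$, $(2)\Rightarrow(3)$, $(3)\Rightarrow(4)$, $(4)\Rightarrow(2)$. The implication $(3)\Rightarrow(4)$ is immediate (an equivalence of categories is full and essentially surjective, and a bijection is a surjection), and $(1)\Leftrightarrow(2)$ is a formal consequence of Theorem~\ref{thm:pi_n_u_x}. The real work is in $(2)\Rightarrow(3)$ and in $(4)\Rightarrow(2)$, the latter being the ``surjectivity forces bijectivity'' phenomenon that generalises Simpson's theorem.

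For $(1)\Leftrightarrow(2)$, one direction is trivial, since $\pi_n(G,x)=\pi_n(G,\Glk[n-1]{0}(x))$ with $\Glk[n-1]{0}(x)$ an $(n-1)$-arrow, so $(2)$ specialises to $(1)$. Conversely, given an $(n-1)$-arrow $u$, put $x=\Gls[0]{n-1}(u)$; since $f$ is a morphism of globular sets, $f(x)=\Gls[0]{n-1}(f(u))$, and the naturality with respect to $f$ of the isomorphism of Theorem~\ref{thm:pi_n_u_x} gives a commutative square whose horizontal maps $\pi_n(G,u)\to\pi_n(G,x)$ and $\pi_n(H,f(u))\to\pi_n(H,f(x))$ are isomorphisms. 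As the bottom map is an isomorphism by $(1)$, so is $\pi_n(G,u)\to\pi_n(H,f(u))$; the case $n=1$ is tautological.

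For $(2)\Rightarrow(3)$ I would induct on $n\ge1$, the statement at level $n$ being that $\pi_n(G,u,v)\to\pi_n(H,f(u),f(v))$ is a bijection for all parallel $(n-1)$-arrows $u,v$; at level $1$ this, together with the surjectivity of $\pi_0(f)$ from $(2)$, is precisely the assertion that $\PI_1(f)$ is an equivalence of categories. The reduction from these ``path'' sets to the ``loop'' groups $\pi_n(G,u)=\pi_n(G,u,u)$ handled by $(2)$ is the standard groupoid fact: if $u\sim_{n-1}v$, a choice of $n$-arrow $\alpha_0:u\to v$ identifies $\Hom_{\PI_n(G)}(u,v)$ with $\Hom_{\PI_n(G)}(u,u)$ by composition, compatibly with $f$ through the functor $\PI_n(f)$, so $\pi_n(G,u,v)\to\pi_n(H,f(u),f(v))$ is bijective because $\pi_n(G,u)\to\pi_n(H,f(u))$ is. It then only remains to see that $u\sim_{n-1}v\iff f(u)\sim_{n-1}f(v)$, so that the case of empty $\Hom$-sets is treated consistently: the forward direction is formal, and the converse follows, for $n-1=0$, from the injectivity of $\pi_0(f)$, and for $n-1\ge1$ from the injectivity of $\pi_{n-1}(G,s,t)\to\pi_{n-1}(H,f(s),f(t))$ applied to the common globular source and target $s,t$ of $u$ and $v$, which is the induction hypothesis at level $n-1$.

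Finally, $(4)\Rightarrow(2)$ is the heart of the matter. The observation that unlocks it is that two parallel $n$-arrows $\alpha,\alpha'$ satisfy $[\alpha]=[\alpha']$ in $\pi_n$ precisely when $\pi_{n+1}(G,\alpha,\alpha')\neq\emptyset$, both conditions being the existence of an $(n+1)$-arrow $\alpha\to\alpha'$. Hence, if $f(\alpha)\sim_n f(\alpha')$ then $\pi_{n+1}(H,f(\alpha),f(\alpha'))\neq\emptyset$, and the surjectivity hypothesis of $(4)$ (valid since $n+1\ge2$) forces $\pi_{n+1}(G,\alpha,\alpha')\neq\emptyset$, i.e.\ $\alpha\sim_n\alpha'$. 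Applied with $n\ge2$ this gives injectivity of $\pi_n(G,u,v)\to\pi_n(H,f(u),f(v))$, hence bijectivity together with the assumed surjectivity; applied with $n=1$ it gives faithfulness of $\PI_1(f)$, so that $\PI_1(f)$, being also full and essentially surjective, is an equivalence. Reading off $(2)$ from the equivalence $\PI_1(f)$ (the case $n=1$) and from the bijections $\pi_n(G,u)\to\pi_n(H,f(u))$ obtained by taking $v=u$ (the case $n\ge2$) then closes the cycle. I expect the main obstacle to be the bookkeeping in $(2)\Rightarrow(3)$ --- correctly interleaving the induction on dimension, the loop/path translation, and the non-emptiness comparison --- whereas $(4)\Rightarrow(2)$ is short once the reformulation ``$[\alpha]=[\alpha']\iff\pi_{n+1}(G,\alpha,\alpha')\neq\emptyset$'' is isolated.
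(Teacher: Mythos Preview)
Your proof is correct and follows essentially the same route as the paper: the equivalence $(1)\Leftrightarrow(2)$ via Theorem~\ref{thm:pi_n_u_x}, the groupoid ``loop/path'' translation for $(2)\Rightarrow(3)$, and the ``surjectivity at level $n+1$ forces injectivity at level $n$'' argument for closing the cycle from $(4)$. The one organizational difference is in handling the empty-Hom case of $(2)\Rightarrow(3)$: you run an induction on $n$ and invoke the induction hypothesis $\pi_{n-1}(G,s,t)\to\pi_{n-1}(H,f(s),f(t))$ bijective directly, whereas the paper avoids the induction by composing with $\Glk{n-1}(\Glw{n-1}(f(u)))$ to land in the automorphism group $\pi_{n-1}(H,f(x))$ and then appeals to $(2)$ itself; both arguments are equally short, and your version has the mild advantage of not invoking the inverse operation $\Glw{n-1}$.
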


\begin{proof}
The equivalence of $(1)$ and $(2)$ is an immediate consequence of the previous
theorem.

The implication $(3) \Rightarrow (2)$ is obvious. Let us show the reciprocal.
Let $n \ge 1$ and let $u, v$ be two parallel $(n-1)$-arrows of $G$. Suppose
there exists an $n$-arrow $\alpha : u \to v$ in $G$ and consider the map
\[ \pi_n(G, u) \to \pi_n(G, u, v), \]
which sends the $n$-arrow $\beta$ to the $n$-arrow
$\alpha \comp^n_{n-1} \beta : u \to v$.
Since $\PI_n(G)$ is a groupoid, this map is a bijection. The
morphism $f$ obviously commutes with this isomorphism, that is, the square
\[
\xymatrix{
\pi_n(G, u) \ar[d] \ar[r] & \pi_n(G, u, v) \ar[d] \\
\pi_n(H, f(u)))  \ar[r] & \pi_n(H, f(u), f(v)) \\
}
\]
is commutative. By hypothesis, the bottom horizontal arrow is a bijection
and it follows that the top horizontal arrow is also a bijection. Thus, it
suffices to show that if there exists an $n$-arrow $\beta : f(u) \to f(v)$
in $H$, then there exists an $n$-arrow $\alpha : u \to v$ in $G$. This is
obvious when $n = 1$ by injectivity of $\pi_0(f)$. So let $n \ge 2$ and let
$\beta : f(u) \to f(v)$ be an $n$-arrow of $H$. Set $x = \Gls{n-1}(u)$ and
$y = \Glt{n-1}(v)$. The arrow
$\Glk{n-1}(\Glw{n-1}(f(u))) \comp^n_{n-2} \beta$ is an $n$-arrow of $H$ from
$\Glw{n-1}(f(u)) \comp^{n-1}_{n-2} f(u) : f(x) \to f(x)$ to $\Glw{n-1}(f(u))
\comp^{n-1}_{n-2} f(v) : f(x) \to f(x)$. By injectivity of the map
\[ \pi_{n-1}(G, x) \to \pi_{n-1}(H, f(x)), \]
the $(n-1)$-arrows $\Glw{n-1}(u) \comp^{n-1}_{n-2} u$
and $\Glw{n-1}(u) \comp^{n-1}_{n-2} v$ are equal in $\pi_{n-1}(G,x)$. Since
$\PI_{n-1}(G)$ is a groupoid, this implies that $u = v$ in
$\pi_{n-1}(G,x,y)$ and so that there exists an $n$-arrow $\alpha : u \to v$.

The implication $(3) \Rightarrow (4)$ is obvious. Let us show the reciprocal.
Let $n \ge 1$. Let $u,v$ be two parallel $(n-1)$-arrows of $G$ and 
let $\alpha, \beta$ be two $n$-arrows from $u$ to $v$.
Suppose we have $f(\alpha) = f(\beta)$ in $\pi_n(H, f(u), f(v))$. 
By definition, there exists an $(n+1)$-arrow of $H$ from $f(\alpha)$ to
$f(\beta)$. By surjectivity of the map
\[ \pi_{n+1}(G, \alpha, \beta) \to \pi_{n+1}(H, f(\alpha), f(\beta)), \]
there exists an $(n+1)$-arrow of $G$ from $\alpha$ to $\beta$, and we thus
have $\alpha = \beta$ in $\pi_n(G, u, v)$.
\end{proof}

\begin{rem}
In \cite{Simpson}, Simpson proves an analogous result for strict
$n$-categories with weak inverses (see Theorem 2.1.C of op.~cit.). See also
Proposition 1.7 of \cite{AraMetWGpd} for the case of strict \oo-groupoids.
\end{rem}

\section{Fundamental $\infty$-groupoid functors}

\begin{tparagr}{Formal coherators}
A \ndef{formal coherator} is a weakly initial object of the category of
contractible globular extensions whose underlying category is small.
Recall that an object $X$ of a category $\C$ is said to be \ndef{weakly
initial} if for any object $Y$ of $\C$ there exists at least one arrow
from~$X$ to $Y$. Proposition \ref{prop:coh_weak_init} shows that every
coherator is a formal coherator.
\end{tparagr}

In the rest of this section, we fix a formal coherator $C$.

\begin{tparagr}{The fundamental $\infty$-groupoid functor of a
contractible globular \hbox{extension}}\label{par:constr_Pi}
Let $\M$ be a contractible globular extension whose underlying category is
cocomplete. By definition of $C$, there exists a globular functor $F : C \to
\M$. Since $\M$ is cocomplete, this functor induces an
adjunction
\[
R_{C,F} :  \pref{C}  \to  \M, \qquad 
\Pi_{C,F} : \M \to \pref{C},
\]
where $R_{C,F}$ is the unique extension of $F$ to $\pref{C}$ preserving
colimits and $\Pi_{C,F}$ is given by the following formula: for every
object $X$ of $\M$, $\Pi_{C,F}(X) = \big(S \mapsto \Hom_{\M}(F(S), X)\big)$.
Since the functor $F$ is globular, the presheaf $\Pi_{C,F}(X)$ is
globular and the previous adjunction induces an adjunction 
\[
R_{C,F} :  \wgpdC{C} \to  \M, \qquad 
\Pi_{C,F} : \M \to \wgpdC{C},
\]
which we will denote the same way. If $X$ is an object of $\M$, we will
call $\Pi_{C, F}(X)$ the \ndef{fundamental
\oo-groupoid} of $X$. Note that it depends a priori on $F$.
However, the underlying globular set of $\Pi_{C, F}$ depends neither on $F$
nor on $C$: its set of $n$-arrows is given by
\[ \Pi_{C, F}(X)_n = \Hom_{\M}(\Dn{n}, X). \]
\end{tparagr}

\begin{exs}
Let $\M = \Top$. We have seen in Example \ref{exs:contr}.\ref{item:Top_contr}
that the globular extension $\Top$ is contractible. By the previous
paragraph we get an adjunction
\[
R_{C,F} :  \wgpdC{C} \to  \Top, \qquad 
\Pi_{C,F} : \Top \to \wgpdC{C},
\]
which depends on a globular functor $F : C \to \Top$. If $X$ is a
topological space, we have
\[ \Pi_{C, F}(X)_n = \Hom_{\Top}(\Dtop{n}, X). \]
In particular, 
\begin{itemize}
\item 
$\Pi_{C, F}(X)_0$ is the set of points of $X$;
\item 
$\Pi_{C, F}(X)_1$ is the set of paths of $X$;
\item 
$\Pi_{C, F}(X)_2$ is the set of (relative) homotopies between paths of $X$;
\item 
$\Pi_{C, F}(X)_3$ is the set of (relative) homotopies between (relative)
homotopies between paths of $X$;
\item etc.
\end{itemize}
\end{exs}

\begin{paragr}
If $D$ is a globular extension, then the Yoneda functor $D \to \pref{D}$
factors through the category $\Mod{D}$ and we get a functor $D \to \Mod{D}$.
In particular, if $D$ is a contractible globular extension, we get a functor
\[ y^{}_D : D \to \wgpdC{D}. \]
\end{paragr}

\begin{prop}\label{prop:PI_f}
Let $\M$ be a cocomplete contractible globular extension and let $F : C \to
\M$ be a globular functor. Then for any object $X$ of $\M$ and any $n \ge
1$, the groupoid $\PI_n(\Pi_{C, F}(X))$ is equal to $\PI_n(y^{}_{\M}(X))$.
In particular, $\PI_n(\Pi_{C, F}(X))$ depends neither on~$F$ nor on $C$.
\end{prop}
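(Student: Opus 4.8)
The plan is to identify the globular presheaf $\Pi_{C,F}(X)$ on $C$ with the inverse image along $F$ of the representable \oo-groupoid $y^{}_{\M}(X)$ of type $\M$, and then to invoke the compatibility of $\PI_n$ with such inverse images established earlier in this section.

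First I would unwind the definition of $\Pi_{C,F}$ given in paragraph~\ref{par:constr_Pi}: for every object $S$ of $C$ one has $\Pi_{C,F}(X)(S) = \Hom_{\M}(F(S),X)$, and this set is precisely $y^{}_{\M}(X)(F(S)) = F^*(y^{}_{\M}(X))(S)$; moreover both presheaves send a morphism $\varphi$ of $C$ to precomposition with $F(\varphi)$. Hence $\Pi_{C,F}(X) = F^*(y^{}_{\M}(X))$ as globular presheaves on $C$, and not merely up to a canonical isomorphism. (Here $y^{}_{\M}(X)$ is a genuine globular presheaf on $\M$, since a representable presheaf on a globular extension is globular by the very universal property defining globular sums.)

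Second, I would note that $F : C \to \M$ is a morphism of contractible globular extensions: $C$ is a formal coherator, hence contractible; $\M$ is contractible by hypothesis; and $F$ is a globular functor. Therefore the earlier proposition stating that $\PI_n$ commutes with inverse image along a morphism of contractible globular extensions (the one immediately preceding the paragraph on homotopy groups of \oo-groupoids) applies and gives $\PI_n \circ F^* = \PI_n$ as functors $\wgpdC{\M} \to \Gpd$. Evaluating at $y^{}_{\M}(X)$ then yields $\PI_n(\Pi_{C,F}(X)) = \PI_n\big(F^*(y^{}_{\M}(X))\big) = \PI_n(y^{}_{\M}(X))$, which is the asserted equality; the final clause is immediate, since $\PI_n(y^{}_{\M}(X))$ refers neither to $C$ nor to $F$.

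I do not expect a genuine obstacle here. The only point that calls for care is checking that $\Pi_{C,F}(X)$ and $F^*(y^{}_{\M}(X))$ agree on the nose --- on objects \emph{and} on morphisms of $C$ --- rather than only up to isomorphism, which is why I would spell out this identification explicitly instead of appealing to an abstract argument; everything else is a direct application of results already available in the excerpt.
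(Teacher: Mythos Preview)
Your proof is correct and follows essentially the same approach as the paper: identify $\Pi_{C,F}(X)$ with $F^*(y^{}_{\M}(X))$, then apply the earlier proposition that $\PI_n$ commutes with $F^*$ for a morphism of contractible globular extensions. The paper presents this as the commutativity of two triangles in a single diagram, writing only ``by definition of $\Pi_{C,F}$'' for the identification you spell out in detail; your explicit verification that the two presheaves agree on objects and morphisms is a welcome elaboration rather than a different argument.
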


\begin{proof}
Consider the following diagram:
\[
\xymatrix@C=1pc@R=1pc{
& \M \ar[dl]_{y^{}_{\M}} \ar[dr]^{\Pi_{C,F}} \\
\wgpdC{\M} \ar[dr]_{\PI_n} \ar[rr]^{F^*} & & \wgpdC{C} \ar[dl]^{\PI_n} \pbox{.}\\
& \Gpd
}
\]
The upper triangle is commutative by definition of $\Pi_{C,F}$ and the lower
triangle is commutative by Proposition \ref{prop:PI_f}. Hence the result.
\end{proof}

\begin{rem}\label{rem:weak_funct}
Let  $F, F' : C \to \M$ be two globular functors. Since $\Pi_{C,F}(X)$
and~$\Pi_{C,F'}(X)$ have the same underlying globular set, we can consider
the
identity morphism (of globular sets) from $\Pi_{C,F}(X)$ to $\Pi_{C,F'}(X)$.
This morphism is \emph{not} a morphism of \oo-groupoids. Our feeling is
that it should be part of the data defining a \emph{weak} morphism of
\oo-groupoids (whatever that might mean). The previous proposition would then show
that this weak morphism is a weak equivalence and so that $\Pi_{C, F}$ does
not depend on $F$ in some homotopy category.
\end{rem}

\begin{paragr}
In the rest of this section, we will explain how to construct a
fundamental \oo-group\-oid functor $\M \to \wgpdC{C}$ when $\M$ is a
model category in which every object is fibrant.

We will denote by $\varnothing$ (resp.~$\ast$) the initial (resp.~terminal)
object of a category. If $X$ is an object of a model category $\M$, we will
say that $X$ is \ndef{weakly contractible} if the unique morphism $X \to
\ast$ is a weak equivalence.
\end{paragr}

\begin{tparagr}{Cofibrant and weakly contractible functors (definition)}
Let $\M$ be a model category endowed with a functor $F : \G \to \M$.
For $n \ge 0$, we define an object $\Sn{n-1}$ endowed with a map $i_n :
\Sn{n-1} \to \Dn{n}$ in the following way. For $n = 0$, we
set
 \[ \Sn{-1} = \varnothing \]
and we define
 \[ i_0 : \Sn{-1} \to \Dn{0} \] 
as the unique morphism from the initial object to $\Dn{0}$. For $n \ge 1$,
we set
\[
\Sn{n-1} = (\Dn{n-1}, i_{n-1}) \amalg_{\Sn{n-2}} (i_{n-1}, \Dn{n-1})
\]
and
\[ i_n = (\Tht{n}, \Ths{n}) : \Sn{n-1} \to \Dn{n}. \]
Let us justify that $i_n$ is well-defined. We have to check that
$\Ths{n}i_{n-1} = \Tht{n}i_{n-1}$. But by looking at the two components of
this equality, one sees that it is equivalent to the coglobular relations.

We will say that the functor $F : \G \to \M$ is \ndef{cofibrant} if for
every $n \ge 0$, the morphism $i_n : \Sn{n-1} \to \Dn{n}$ is a cofibration
in $\M$. We will say that the functor $F : \G \to \M$ is \ndef{weakly
contractible} if for any $n \ge 0$, the object $\Dn{n}$ is weakly
contractible in $\M$.
\end{tparagr}

\begin{rem}
If $\M$ is a model category, the category $\Homi(\G, \M)$ of functors from
$\G$ to $\M$ is endowed with a model category structure coming from the
fact that $\G$ is a direct category: the so-called Reedy model structure.
It is easy to show that a functor $F : \G \to \M$ is cofibrant in this model
category if and only if it is a cofibration in the sense of the previous
paragraph. Indeed, the $n$-th latching object of a functor $F : \G \to \M$
is exactly $\Sn{n-1}$. Moreover, a functor $F : \G \to \M$ is weakly
contractible in this model category structure if and only if it is weakly
contractible in the sense of the previous paragraph. In particular, a
cofibrant and weakly contractible functor $F : \G \to \M$ is nothing but a
cofibrant replacement of the terminal object of $\Homi(\G, \M)$. Such a
functor hence exists and is in some sense unique.

We will make no use of this observation in the rest of the article.  In
particular, we will prove by hand the existence of a cofibrant weakly
contractible functor (see paragraph~\ref{par:ex_cof_wc}). We refer the
reader to Chapter 15 of \cite{HirschMC} for the theory of Reedy model
structures.
\end{rem}

\begin{ex}
Let $R : \G \to \Top$ be the functor defining the globular extension
structure of $\Top$. In this case, the morphism $i_n$ is nothing but the
inclusion of the $(n-1)$\nobreakdash-sphere~$\Stop{n-1}$ as the boundary of the $n$-disk
$\Dtop{n}$ and is hence a cofibration. On the other hand, the disks are of
course contractible. The functor $R$ is thus cofibrant and weakly
contractible.
\end{ex}

\begin{prop}\label{prop:glob_contr}
Let $\M$ be a model category endowed with a cofibrant and weakly
contractible functor $F : \G \to \M$. Then every globular sum in
$\M$ is weakly contractible.
\end{prop}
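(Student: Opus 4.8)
The plan is to exhibit a globular sum as a finite iterated pushout of the $\Dn{i}$'s and to check that at each stage one glues along an acyclic cofibration, so that weak contractibility propagates. Concretely, given a table of dimensions $T = \tabdim$, I would write its globular sum in $\M$ as $X_n$, where $X_1 = \Dn{i_1}$ and, for $1 \le k \le n - 1$,
\[ X_{k+1} = X_k \amalgd{i'_k} \Dn{i_{k+1}}, \]
with left leg $\Dn{i'_k} \to X_k$ given by $\Ths[i'_k]{i_k}$ followed by the structural map $\Dn{i_k} \to X_k$, and right leg $\Tht[i'_k]{i_{k+1}} : \Dn{i'_k} \to \Dn{i_{k+1}}$; then $X_n$ is the globular sum associated to $T$.

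The first step is to check that the cofaces $\Ths{n}, \Tht{n} : \Dn{n-1} \to \Dn{n}$ are cofibrations of $\M$. By construction $\Sn{n-1}$ is the pushout of two copies of $\Dn{n-1}$ along $i_{n-1} : \Sn{n-2} \to \Dn{n-1}$; since $F$ is cofibrant this map is a cofibration, hence so is each of the two resulting maps $\Dn{n-1} \to \Sn{n-1}$, being a pushout of a cofibration. Postcomposing with the cofibration $i_n = (\Tht{n}, \Ths{n}) : \Sn{n-1} \to \Dn{n}$ exhibits $\Ths{n}$ and $\Tht{n}$ as cofibrations. As $\Ths[j]{i}$ and $\Tht[j]{i}$ are by definition composites of such cofaces, they too are cofibrations for all $i \ge j \ge 0$. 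Moreover, since $F$ is weakly contractible, each $\Dn{i} \to \ast$ is a weak equivalence, so the two-out-of-three property applied to $\Dn{j} \to \Dn{i} \to \ast$ shows that every $\Ths[j]{i}$ and $\Tht[j]{i}$ is in fact an acyclic cofibration.

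With this in hand I would prove by induction on $k$ that each $X_k$ is weakly contractible. The case $k = 1$ is precisely the hypothesis that $\Dn{i_1} \to \ast$ is a weak equivalence. For the induction step, $X_{k+1}$ is obtained from $X_k$ as a pushout of the acyclic cofibration $\Tht[i'_k]{i_{k+1}}$ along the map $\Dn{i'_k} \to X_k$; since acyclic cofibrations are stable under pushout in any model category, $X_k \to X_{k+1}$ is an acyclic cofibration, in particular a weak equivalence. Combining this with the weak equivalence $X_k \to \ast$ and the two-out-of-three property shows that $X_{k+1} \to \ast$ is a weak equivalence. Taking $k = n$ gives the result.

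The only delicate point is the first step, namely passing from the cofibrancy of the functor $F$ --- which is phrased in terms of the latching maps $i_n : \Sn{n-1} \to \Dn{n}$ --- to the statement that each individual coface is a cofibration of $\M$. Everything afterwards is formal, relying only on the stability of (acyclic) cofibrations under pushout and on the two-out-of-three property; in particular no properness hypothesis on $\M$ is needed.
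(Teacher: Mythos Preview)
Your proof is correct and follows essentially the same approach as the paper: both first establish that the cofaces $\Ths{n}, \Tht{n}$ are cofibrations by factoring them through $\Sn{n-1}$, then run an induction on the width of the table, using that the gluing map $\Tht[i'_k]{i_{k+1}}$ is an acyclic cofibration so that its pushout preserves weak contractibility. Your write-up is slightly more explicit about why the iterated cofaces are acyclic, but the argument is the same.
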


\begin{proof}
Let us first prove that the $\Ths{n}$'s and the $\Tht{n}$'s are
cofibrations. Let $n \ge 1$. We have $\Ths{n} = i_{n}\ceps{2}$ and
$\Tht{n} = i_{n}\ceps{1}$, where $\ceps{1}, \ceps{2} : \Dn{n-1} \to
\Sn{n-1}$ are the two canonical morphisms. The morphism $i_n$ is a
cofibration by hypothesis. Moreover, the $\ceps{i}$'s are both pushouts of
$i_{n-1}$ and hence are cofibrations. It follows that $\Ths{n}$ and
$\Tht{n}$ are cofibrations.

Let us now prove the assertion. We need to show that for every table of
dimensions, the globular sum associated to the table is weakly contractible.
We prove this by induction on the width $n$ of the table. If $n = 1$, the
globular sum is a $\Dn{n}$ which is weakly contractible by assumption.
Otherwise, let
\[
  S = \left( \begin{matrix} i_1 && \cdots && i_n \cr
  & i'_1 & \cdots & i'_{n-1}
  \end{matrix}
  \right)
\]
be our table of dimensions and $X$ be the globular sum associated to $S$.
Set
\[
   T = \left( \begin{matrix} i_1 && \cdots && i_{n-1} \cr
  & i'_1 & \cdots & i'_{n-2}
  \end{matrix}
  \right)
\]
and denote by $Y$ the globular sum associated to $T$. We have
\[ X = Y \amalg_{\Dn{i'_{n-1}}} \Dn{i_n}. \]
More precisely, the commutative square
\[
\xymatrix{
  \Dn{i'_{n-1}} \ar[r]^{\Tht[i'_{n-1}]{i_n}} \ar[d] & \Dn{i_n} \ar[d] \\
  Y \ar[r] & X \\
}
\]
is cocartesian. But the top horizontal arrow is a cofibration between
weakly contractible objects and is thus a trivial cofibration. Hence the
canonical morphism $Y \to X$ is a trivial cofibration. Since by induction
$Y$ is weakly contractible, $X$ is also weakly contractible. Hence the
result.
\end{proof}

\begin{prop}\label{prop:M_contr}
Let $\M$ be a model category endowed with a cofibrant and weakly
contractible functor $F : \G \to \M$. Assume that every object of $\M$ is fibrant.
Then the globular extension $(\M, F)$ is contractible.
\end{prop}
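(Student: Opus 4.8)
The plan is to recast the existence of a lifting of an admissible pair as an ordinary lifting problem in the model structure on $\M$ and then to invoke the lifting axiom. The starting observation is purely formal: a globularly parallel pair $(f, g) : \Dn{n} \to S$ is the same datum as a morphism $\Sn{n} \to S$. Indeed, by construction $\Sn{n} = (\Dn{n}, i_n) \amalg_{\Sn{n-1}} (i_n, \Dn{n})$, so a morphism out of $\Sn{n}$ amounts to a pair of morphisms $\Dn{n} \to S$ that agree after restriction along $i_n = (\Tht{n}, \Ths{n}) : \Sn{n-1} \to \Dn{n}$; for $n \ge 1$ this agreement condition says exactly that the two morphisms have the same globular source and globular target, and for $n = 0$ there is no condition, matching the convention that all $0$-morphisms are globularly parallel. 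Similarly, since $i_{n+1} = (\Tht{n+1}, \Ths{n+1}) : \Sn{n} \to \Dn{n+1}$, a lifting $h : \Dn{n+1} \to S$ of the pair $(f, g)$ — i.e.\ a morphism with $h\Ths{n+1} = f$ and $h\Tht{n+1} = g$ — is precisely a morphism $h$ with $h i_{n+1} = (g, f) : \Sn{n} \to S$, because $h\Tht{n+1}$ and $h\Ths{n+1}$ are the two components of $h i_{n+1}$.

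With this translation in hand, I would argue as follows. Let $(f, g) : \Dn{n} \to S$ be an admissible pair and let $(g, f) : \Sn{n} \to S$ be the associated morphism. The morphism $i_{n+1} : \Sn{n} \to \Dn{n+1}$ is a cofibration, since $F$ is a cofibrant functor. On the other hand, $S$ is a globular sum, hence weakly contractible by Proposition~\ref{prop:glob_contr}; and since every object of $\M$ is fibrant, the canonical morphism $S \to \ast$ is a fibration, hence a trivial fibration. The lifting axiom of a model category, applied to the cofibration $i_{n+1}$ and the trivial fibration $S \to \ast$, provides a morphism $h : \Dn{n+1} \to S$ with $h i_{n+1} = (g, f)$, i.e.\ a lifting of the pair $(f, g)$. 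As $(f, g)$ was an arbitrary admissible pair, this shows that $(\M, F)$ is contractible.

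I do not anticipate a real obstacle here: the argument is a one-line application of the model category axioms once the dictionary between globularly parallel pairs and maps out of $\Sn{n}$ is set up. The only points requiring care are bookkeeping — keeping straight which copy of $\Dn{n}$ inside $\Sn{n}$ carries $f$ and which carries $g$ so that the filler comes out with the correct globular source and target — and the observation that the dimensional bound appearing in the definition of an admissible pair is not needed for existence of liftings: Proposition~\ref{prop:glob_contr} gives weak contractibility of \emph{every} globular sum, of whatever dimension, so every admissible pair lifts regardless of that bound.
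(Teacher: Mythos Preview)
Your proof is correct and follows essentially the same route as the paper: both translate an admissible pair into a map out of $\Sn{n}$, identify a lifting with a filler of the square against $i_{n+1}$, and then apply the lifting axiom using that $i_{n+1}$ is a cofibration and that the globular sum is weakly contractible (Proposition~\ref{prop:glob_contr}) with fibrant target. Your closing remark that the dimensional bound in the definition of admissibility plays no role in this argument is accurate and worth noting.
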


\begin{proof}
The data of an admissible pair $(f, g) : \Dn{n} \to X$ is equivalent to
the data of a morphism $\Sn{n} \to X$. Indeed, every morphism $k : \Sn{n}
\to X$ can be written $k = (f, g)$, where $f, g : \Dn{n} \to X$ satisfy
$fi_n = gi_n$. But the two components of this equality are precisely the
relations showing that $f$ and $g$ are globularly parallel.

Moreover, a morphism $h : \Dn{n} \to X$ is a lifting of the admissible pair
$(f, g) : \Dn{n} \to X$ if and only if the triangle
\[
\xymatrix{
\Dn{n+1} \ar[rd]^h \\
\Sn{n} \ar[r]_{(f,g)} \ar[u]^{i_{n+1}} & X \\
}
\]
commutes. Indeed, the commutativity of the triangle is equivalent to
the equalities $hi_n\ceps{2} = f$ and $hi_n\ceps{1} = g$. But we have
$i_n\ceps{2} = \Ths{n}$ and $i_n\ceps{1} = \Tht{n}$.

Consider now the commutative square
\[
\xymatrix{
\Sn{n} \ar[d]_{i_{n+1}} \ar[r]^{(f,g)} & X \ar[d] \\
\Dn{n+1} 
\ar[r] & \ast\pbox{.}
}
\]
By hypothesis, the morphism $i_{n+1}$ is a cofibration. Moreover, by the
previous proposition, the globular sum $X$ is weakly contractible. Since by
assumption, every object of $\M$ is fibrant, the morphism $X \to \ast$ is a
trivial fibration. Hence there exists a lifting $\Dn{n+1} \to X$. This shows
that the globular extension $(\M, F)$ is contractible.
\end{proof}

\begin{tparagr}{Cofibrant and weakly contractible functors (construction)}
\label{par:ex_cof_wc}
Let $\M$ be a model category. We will now construct a cofibrant and weakly
contractible functor from~$\G$ to~$\M$.

We define by induction on $n \ge 0$ an object $\Dn{n}$ of $\M$ and a cofibration
$i_n : \Sn{n-1} \to \Dn{n}$, where
\[
\Sn{-1} = \varnothing
\quad\text{and}\quad
\Sn{n-1} = (\Dn{n-1}, i_{n-1}) \amalg_{\Sn{n-2}} (i_{n-1}, \Dn{n-1}),
\quad n \ge 1,
\]
in the following way.
For $n = 0$, we define $\Dn{0}$ as a cofibrant replacement of the terminal
object and $i_0 : \Sn{-1} \to \Dn{0}$ as the unique morphism from the initial
object to $\Dn{0}$. For $n \ge 1$, consider the morphism
\[ (\id{\Dn{n-1}}, \id{\Dn{n-1}}) : \Sn{n-1} \to \Dn{n-1} \]
and factor it as a cofibration $i_n$, followed by a weak equivalence
$p_n$. We define $\Dn{n}$ as the middle object of this factorization
\[ \Sn{n-1} \xrightarrow{i_n} \Dn{n} \xrightarrow{p_n} \Dn{n-1}. \]
Note that since $p_n$ is a weak equivalence and $\Dn{0}$ is weakly
contractible, by induction on $n \ge 0$, the object $\Dn{n}$ is weakly
contractible.

We now set
\[ \Ths{n} = i_n\ceps{2} \quad\text{and}\quad \Tht{n} = i_n\ceps{1},
\quad n \ge 1, \]
where $\ceps{1}, \ceps{2} : \Dn{n-1} \to \Sn{n-1}$ are the canonical
morphisms.  Let us prove the coglobular relations. Let $n \ge 1$. By
definition of $\Sn{n}$, we have
$\cepsp{1}i_n = \cepsp{2}i_n$,
where $\cepsp{1}, \cepsp{2} : \Dn{n} \to \Sn{n}$ are the canonical morphisms.
We thus have
\[
\Ths{n+1}\Ths{n} = i_{n+1}\cepsp{2}i_n\ceps{2} = 
i_{n+1}\cepsp{1}i_n\ceps{2} = \Tht{n+1}\Ths{n}
\]
and
\[
\Ths{n+1}\Tht{n} = i_{n+1}\cepsp{2}i_n\ceps{1} = 
i_{n+1}\cepsp{1}i_n\ceps{1} = \Tht{n+1}\Tht{n}.
\]

We have thus defined a functor $\G \to \M$. This functor is cofibrant and
weakly contractible by construction.
\end{tparagr}

\begin{tparagr}{The fundamental $\infty$-groupoid functor of a
model category}\label{par:def_Pi_cat_mod}
Let now $\M$ be a model category in which every object is fibrant. By the
previous paragraph, there exists a cofibrant and weakly contractible
functor $F : \G \to \M$. By Proposition \ref{prop:M_contr}, the globular
extension $(\M, F)$ is contractible. We can thus apply paragraph
\ref{par:constr_Pi} and we get an
adjunction
\[
R_{C, K} :  \wgpdC{C} \to  \M, \qquad 
\Pi_{C,K} : \M \to \wgpdC{C},
\]
which depends on a globular functor $K : C \to \M$.

If $X$ is an object of $\M$, we will call $\Pi_{C, K}(X)$ the
\ndef{fundamental \oo-groupoid} of $X$. Note that it depends a priori on $F$
and $K$. In particular, its underlying globular set depends on~$F$. However,
this \oo-groupoid should not depend on $F$ and $K$ in some weak sense (see
Remark \ref{rem:weak_funct}). We will show in the next section that the
homotopy groups of this \oo-groupoid are independent of $F$ and $K$.
\end{tparagr}

\begin{rem}
If $\M$ is a combinatorial model category, we can also construct a
fundamental \oo-groupoid functor. Indeed, by a theorem of Dugger
(\cite[Theorem 1.1]{DuggerPres}), such an $\M$ is Quillen equivalent to some
left Bousfield localization $\N$ of a category of simplicial presheaves
endowed with the projective model structure. The model category $\N$ is
combinatorial and every cofibration of $\N$ is a monomorphism. By a theorem
of Nikolaus (\cite[Corollary 2.21]{NikolausAlgMod}), such a model category
is Quillen equivalent to a model category $\P$ in which every object is
fibrant.  Although these two Quillen equivalences do not compose, we obtain
a functor $Q : \M \to \P$ ``inducing'' an equivalence on homotopy
categories. We can thus define a fundamental \oo-groupoid functor as the
composition
\[ \Pi_\infty : \M \to \P \xrightarrow{\Pi_\infty} \wgpdC{C}, \]
where $\Pi_\infty : \P \to \wgpdC{C}$ is a fundamental \oo-groupoid functor
in the sense of the previous paragraph.
\end{rem}

\begin{rem}
We do not need the full strength of a model category structure to
construct the fundamental \oo-groupoid functor. Recall that if $\C$ 
is category, a \ndef{weak factorization system} consists of
two classes of morphisms $L$ and $R$ of $\C$ satisfying the following
properties: 
\begin{enumerate}
\item every morphism $f$ of $\C$ factors as $f = pi$ with $i$ in $L$ and $p$
in $R$;
\item $L$ is the class of morphisms of $\C$ having the left lifting property
with respect to~$R$;
\item $R$ is the class of morphisms of $\C$ having the right lifting property
with respect to~$L$.
\end{enumerate}

Let $\M$ be a cocomplete category admitting a terminal object, endowed
with a weak factorization system $(L, R)$. We will think of the elements of
$L$ as cofibrations and of the elements of $R$ as trivial fibrations (and
in particular as weak equivalences). We can construct as in paragraph
\ref{par:ex_cof_wc} a functor $F : \G \to \M$ by replacing every
factorization as a cofibration followed by a weak equivalence, by a
factorization as a morphism of $L$ followed by a morphism of $R$.
It is not true in general that $(\M, F)$ is a contractible globular
extension: we need to add a hypothesis saying that in some sense
every object of $\M$ is fibrant.

We will say that an object $X$ of $\M$ is \ndef{fibrant and weakly
contractible} if the unique morphism $X \to \ast$ is in $R$. Suppose now
that our weak factorization system satisfies the following additional property:
for every cocartesian square
\[
\xymatrix{
X_1 \ar[r] \ar[d] & X_2 \ar[d] \\
X_3 \ar[r] & X_4 \\
}
\]
in $\M$, if $X_1$, $X_2$, $X_3$ are fibrant and weakly contractible, then
so is $X_4$. Note that this hypothesis is satisfied when we consider the
factorization system of cofibrations and trivial fibrations of a model
category in which every object is fibrant.

Under this hypothesis, the proofs of Propositions \ref{prop:glob_contr} and
\ref{prop:M_contr} can easily be adapted and we obtain that $(\M, F)$ is a
contractible globular extension. We can thus define a fundamental
\oo-groupoid functor $\Pi_\infty : \M \to \wgpdC{C}$.
\end{rem}

\section{Quillen's theory of $\pi_1$ in a model category}

The purpose of this section is to recall to the reader some definitions and
facts about Quillen's theory of $\pi_1$ in a model category which was
introduced in Section I.2 of \cite{Quillen}.

\begin{tparagr}{Notation and terminology about model categories}
If $\M$ is a model category and $X, Y$ are two objects of $\M$, we will
denote by $[X, Y]_\M$ the set of morphisms between $X$ and $Y$ in the
homotopy category $\Ho(\M)$ of $\M$. If $\M$ is understood, we will simply
denote this set by $[X, Y]$. Recall that when $X$ is cofibrant and $Y$ is
fibrant, $[X, Y]$ is in canonical bijection with the set of morphisms $X \to
Y$ up to (left or right) homotopy.

Let $\M$ be a model category. The definition of cylinder objects varies from
author to author. We will use Quillen's original definition: a
\ndef{cylinder object} of an object $A$ of $\M$ is an object $C$ of $\M$
endowed with a factorization
\[
\xymatrix{
A \amalg A \ar[r]^-{(i_1, i_0)} & C \ar[r]^s & A
}
\]
of the codiagonal of $A$ as a cofibration followed by a weak
equivalence. Dually, a \ndef{path object} of an object $B$ of $\M$ is an
object $P$ of $\M$ endowed with a factorization
\[
\xymatrix{
B \ar[r]^r & P \ar[r]^-{(p_1, p_0)} & B \times B
}
\]
of the diagonal of $B$ as a weak equivalence followed by a fibration.  We will
always use the letters $i$, $s$, $r$ and $p$ to denote the structural maps
of cylinder and path objects.

If $f, g : A \to B$ are two morphisms of $\M$, a \ndef{left homotopy} from
$f$ to $g$ is a morphism $h : C \to B$, where $C$ is a cylinder object of $A$,
such that $hi_0 = f$ and $hi_1 = g$. Note that we have inverted Quillen's
original direction for homotopies (because we have exchanged~$i_0$ and
$i_1$) in order to be coherent with our convention for globular sums. We
apply the same treatment to right homotopies.
\end{tparagr}

We will use the following easy fact several times.

\begin{tparagr}{Existence and uniqueness of morphisms of path objects}
\label{paragr:morph_path_obj}
Let $f : A \to A'$ be a morphism in a model category $\M$. Let $P$
(resp.~$P'$) be a path object of $A$ (resp.~of~$A'$). Suppose moreover that $r
: A \to P$ is a cofibration. Then there exists a morphism $g$ such that the
square
\[
\xymatrix@C=3pc{
A \ar[d]_f \ar[r]^r & P \ar[d]_g \ar[r]^-{(p_1, p_0)} & A\times A \ar[d]^{f
\times f} \\
A'\ar[r]_{r'} & P' \ar[r]_-{(p'_1, p'_0)} & A'\times A'  \\
}
\]
is commutative. Indeed, any lifting of the commutative square
\[
\xymatrix@C=3pc{
A \ar[r]^f \ar[d]_r & A' \ar[r]^{r'} &P' \ar[d]^-{(p'_1, p'_0)} \\
P \ar[r]_-{(p_1, p_0)} & A \times A \ar[r]_{f\times f} & A'\times A'\\
}
\]
gives such a morphism $g$. Moreover, such a morphism is a weak equivalence
and is unique up to homotopy in the category of objects of $\M$ under $A$
and over $A' \times A'$ (see Proposition~7.6.14 of \cite{HirschMC}).
\end{tparagr}

From now on, we fix a model category $\M$, a cofibrant object $A$ of $\M$, a
fibrant object $B$ of $\M$ and two morphisms $f, g : A \to B$.

\begin{tparagr}{2-homotopies and correspondences}
Let $C$ and $C'$ be two cylinder objects of~$A$.
A \ndef{$2$-cylinder object} of $C$ and $C'$ is an object $D$ of $\M$ endowed
with a factorization
\[
\xymatrix{
 (C', (i'_1, i'_0)) \amalg_{A \amalg A} (C, (i_1, i_0)) \ar[r]^-{(j_1,j_0)} & D \ar[r]^t & A
}
\]
of the morphism 
\[ (s', s) : (C', (i'_1, i'_0)) \amalg_{A \amalg A} (C, (i_1, i_0)) \to A \]
as a cofibration followed by a weak equivalence.
If $C = C'$, we will simply say that $D$ is a $2$-cylinder object of $C$.

If $h : C \to B$ and $h' : C' \to B$ are two left homotopies from $f$ to
$g$, then a \ndef{left $2$-homotopy} from $h$ to $h'$ is a morphism
$H : D \to B$ such that $Hj_0 = h$ and $Hj_1 = h'$. If such an $H$ exists,
we will say that $h$ and $h'$ are \ndef{left $2$-homotopic}.

Let $C$ be a cylinder object of $A$ and let $P$ be a path object of $B$. If
$h : C \to B$ is a left homotopy from $f$ to $g$ and $k : A \to P$ is a
right homotopy from $f$ to $g$, a \ndef{correspondence} between $h$ and $k$ 
is a map $H : C \to P$ such that
\[ p_0H = h,\quad Hi_0 = k,\quad p_1H = gs\quad\text{and}\quad Hi_1 = rg. \]
If such a correspondence exists, we will say that $h$ and $k$ \ndef{correspond}.
\end{tparagr}

\goodbreak

\begin{lemma}[Quillen]\label{lemma:hom_quillen}
\ 
\begin{enumerate}
\item 
Let $h : C \to B$ be a left homotopy from $f$ to $g$. Then for every path
object $P$ of $B$, there exists a right homotopy $k : A \to P$ corresponding
to $h$.
\item Let $h : C \to B$ and $h' : C' \to B$ be two left homotopies
from $f$ to $g$. If $h$ corresponds to a right homotopy $k$, then $h$ is
left $2$-homotopic to $h'$ if and only if $h'$ corresponds to $k$. More
precisely, if $D$ is a fixed $2$-cylinder object of $C$ and $C'$, then there
exists a left $2$-homotopy $D \to B$ from $h$ to $h'$ if and only if
$h'$ and $k$ correspond.
\end{enumerate}
\end{lemma}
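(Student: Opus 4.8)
The plan is to turn both parts into lifting problems against the model structure of $\M$. Two preliminary observations carry most of the weight. Since $A$ is cofibrant, either inclusion $A \to A \amalg A$ is a cofibration (a cobase change of $\varnothing \to A$), so the cylinder structure maps $i_0, i_1 : A \to C$ are cofibrations; and since $s i_0 = s i_1 = \id{A}$ with $s$ a weak equivalence, they are trivial cofibrations (likewise $i'_0, i'_1 : A \to C'$). In the same way $j_0 : C \to D$ and $j_1 : C' \to D$ are trivial cofibrations: $j_0$ is the composite of the cobase change of the cofibration $(i'_1, i'_0)$ along $(i_1, i_0)$ with the cofibration $(j_1, j_0)$, and $t j_0 = s$ is a weak equivalence, and symmetrically for $j_1$ with $t j_1 = s'$. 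On the other side, since $B$ is fibrant both projections $B \times B \to B$ are fibrations (base changes of $B \to \ast$), so $p_0, p_1 : P \to B$ are fibrations; and since $p_0 r = p_1 r = \id{B}$ they are actually trivial fibrations.

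For part (1), I would form the commutative square with left edge the trivial cofibration $i_1 : A \to C$, right edge the fibration $(p_1, p_0) : P \to B \times B$, top edge $rg : A \to P$ and bottom edge $(gs, h) : C \to B \times B$; it commutes because $(p_1, p_0) rg = (g, g) = (gs, h) i_1$. A lift $H : C \to P$ then satisfies $p_1 H = gs$, $p_0 H = h$ and $H i_1 = rg$, and setting $k := H i_0$ one has $p_0 k = h i_0 = f$, $p_1 k = g s i_0 = g$, so $k$ is a right homotopy from $f$ to $g$ and $H$ exhibits it as corresponding to $h$.

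For part (2), fix a correspondence $H : C \to P$ between $h$ and $k$, and write $\iota_0 := j_0 i_0 = j_1 i'_0$ and $\iota_1 := j_0 i_1 = j_1 i'_1$ for the identifications built into $D$. If $\mathcal{H} : D \to B$ is a left $2$-homotopy from $h$ to $h'$, I would lift the square with left edge the trivial cofibration $j_0$, right edge $(p_1, p_0)$, top edge $H$ and bottom edge $(gt, \mathcal{H})$ (it commutes since $(p_1, p_0) H = (gs, h) = (gt, \mathcal{H}) j_0$), obtaining $\mathcal{K} : D \to P$ with $\mathcal{K} j_0 = H$, $p_1 \mathcal{K} = gt$ and $p_0 \mathcal{K} = \mathcal{H}$; then $H' := \mathcal{K} j_1$ satisfies $p_0 H' = \mathcal{H} j_1 = h'$, $p_1 H' = g t j_1 = g s'$, $H' i'_1 = \mathcal{K} \iota_1 = H i_1 = rg$ and $H' i'_0 = \mathcal{K} \iota_0 = H i_0 = k$, so $h'$ corresponds to $k$. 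Conversely, given a correspondence $H' : C' \to P$ between $h'$ and $k$, the equalities $H \circ (i_1, i_0) = (rg, k) = H' \circ (i'_1, i'_0)$ let $H$ and $H'$ glue to a map $(H', H) : C' \amalg_{A \amalg A} C \to P$; I would then lift the square with left edge the cofibration $(j_1, j_0)$, right edge the trivial fibration $p_1 : P \to B$, top edge $(H', H)$ and bottom edge $gt : D \to B$ (it commutes because $p_1 \circ (H', H) = g \circ (s', s) = gt \circ (j_1, j_0)$), getting $\mathcal{K} : D \to P$ with $\mathcal{K} j_0 = H$ and $\mathcal{K} j_1 = H'$; finally $\mathcal{H} := p_0 \mathcal{K}$ has $\mathcal{H} j_0 = p_0 H = h$ and $\mathcal{H} j_1 = p_0 H' = h'$, so it is a left $2$-homotopy from $h$ to $h'$.

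The only point that is not routine is the last lifting problem, where $(j_1, j_0) : C' \amalg_{A \amalg A} C \to D$ is genuinely only a cofibration, not a trivial one (already for $A = \ast$ it is the inclusion of a circle into a disc), so the argument works precisely because $p_1 : P \to B$ is a \emph{trivial} fibration and not merely a fibration. The main care needed elsewhere is keeping the gluing identities $j_0 i_0 = j_1 i'_0$, $j_0 i_1 = j_1 i'_1$ and the compatibility $H \circ (i_1, i_0) = H' \circ (i'_1, i'_0) = (rg, k)$ straight, so that the maps out of the pushout are well defined and the squares actually commute.
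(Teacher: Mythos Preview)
Your proof is correct and is essentially Quillen's original argument from \cite[Section I.2, Lemmas 1 and 2]{Quillen}, which is exactly what the paper cites in lieu of a proof. The paper does not supply its own argument here, so your write-up actually fills in what the paper leaves to the reference; in particular your observation that the last lifting problem needs $p_1$ to be a \emph{trivial} fibration (not merely a fibration) is the key point that makes the ``only if'' direction of (2) go through for a fixed $2$-cylinder $D$.
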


\begin{proof}
\ 
\begin{enumerate}
\item See Lemma 1 of \cite[Section I.2]{Quillen}.
\item See Lemma 2 of \cite[Section I.2]{Quillen}. The fact that one can fix the
$2$-cylinder $D$ is not stated but appears clearly in the proof.
\end{enumerate}
\end{proof}

\begin{tparagr}{The set $\pi_1(A, B; f, g)$}\label{paragr:desc_pi1}
We will denote by $\pi_1(A, B; f, g)$ the class of left homotopies of
$\M$ from $f$ to $g$, up to left $2$-homotopy. It is not clear a priori that
$\pi_1(A, B; f, g)$ is a set. But by the previous lemma (and its dual),
$\pi_1(A, B; f, g)$ is in canonical bijection with the class of left
homotopies from a fixed cylinder $C$ up to left $2$-homotopy.  This is
obviously a set. Note that we can even ask for the left $2$-homotopies to
use a fixed $2$-cylinder of $C$.

Dually, we can consider right homotopies up to right $2$-homotopies in an
appropriate sense. The previous lemma shows that we get this way a set in
canonical bijection with $\pi_1(A, B; f, g)$.
\end{tparagr}

\begin{tparagr}{The groupoid $\Pi_1(A, B)$}\label{paragr:pi1_comp}
Let $\Pi_1(A, B)$ be the graph defined in the following way. The objects
of~$\Pi_1(A, B)$ are the morphisms from $A$ to $B$ of $\M$. If $f, g : A \to
B$ are two such objects, the set of arrows from $f$ to $g$ in $\Pi_1(A, B)$
is $\pi_1(A, B; f, g)$.

Let $f_1, f_2, f_3 : A \to B$ be three morphisms of $\M$, and let $h : C \to
B$ and $h' : C' \to B$ be two left homotopies respectively from $f_1$ to
$f_2$ and from $f_2$ to $f_3$. Then
\[ (C', i'_0) \amalg_A (i_1, C) \]
is a left cylinder for the factorization
\[
\xymatrix{
A \amalg A \ar[r]^-{i_1\, \amalg\, i_0} & (C', i'_0) \amalg_A (i_1, C)
\ar[r]^-{(s', s)} & A
},
\]
and
\[
(C', i'_0) \amalg_A (i_1, C) \xrightarrow{(h', h)} B
\]
is a left homotopy from $f_1$ to $f_3$.
\end{tparagr}

\begin{prop}[Quillen]
The above construction defines a map
\[
\pi_1(A, B; f_2, f_3) \times \pi_1(A, B; f_1, f_2) \to \pi_1(A, B; f_1, f_3)
\]
for every $f_1, f_2, f_3 : A \to B$ and these maps induce a groupoid structure
on $\Pi_1(A, B)$.
\end{prop}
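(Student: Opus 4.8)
The plan is to check the three groupoid axioms—well-definedness of composition, associativity, and existence of units and inverses—working entirely with a fixed cylinder object where possible and reducing everything to Quillen's Lemma~\ref{lemma:hom_quillen} (on correspondences). First I would verify that the composition is well-defined on $2$-homotopy classes: given left homotopies $h_1 \tildeh h_1'$ from $f_1$ to $f_2$ and $h_2 \tildeh h_2'$ from $f_2$ to $f_3$, I must show $(h_2, h_1) \tildeh (h_2', h_1')$ as left homotopies from $f_1$ to $f_3$ via the concatenated cylinder $(C', i'_0) \amalg_A (i_1, C)$. The cleanest route is the correspondence characterization: pick a path object $P$ of $B$ with $r : B \to P$ a cofibration (such a path object exists by factoring the diagonal and, if necessary, replacing $P$ using \ref{paragr:morph_path_obj}); by Lemma~\ref{lemma:hom_quillen}(i) each $h_i$ corresponds to a right homotopy $k_i : A \to P$, and since $h_i \tildeh h_i'$, part (ii) gives that $h_i'$ also corresponds to $k_i$. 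One then observes that a correspondence $H_i : C_i \to P$ assembling along $A$ yields a correspondence $(H_2, H_1)$ for the concatenated left homotopy against the right homotopy obtained by concatenating $k_2$ and $k_1$ through a suitable path object of $B$—and the same assembly works for the primed data. Applying Lemma~\ref{lemma:hom_quillen}(ii) in reverse gives the desired $2$-homotopy. So the map on $\pi_1$'s is well-defined.

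Next I would treat the identity and inverse. For the identity at $f : A \to B$, take any cylinder $C$ of $A$ and the constant homotopy $fs : C \to B$; concatenating $fs$ with $h : C' \to B$ from $f$ to $g$ produces a left homotopy from $f$ to $g$ via $(C, i_0) \amalg_A (i_1, C')$, and I must exhibit a $2$-homotopy back to $h$. Again the correspondence trick handles this: if $h$ corresponds to $k : A \to P$, then so does the concatenation $(fs, h)$ (the constant piece corresponds to $rf$, which glues trivially), so by Lemma~\ref{lemma:hom_quillen}(ii) they are $2$-homotopic. The same argument handles right units. For inverses, given $h : C \to B$ from $f$ to $g$, let $C^{-}$ be the cylinder $C$ with $i_0$ and $i_1$ swapped; then $h$ viewed as a map $C^{-} \to B$ is a left homotopy from $g$ to $f$. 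Concatenating gives a left homotopy from $f$ to $f$, and one shows via correspondences (the correspondence $H : C \to P$ of $h$ with some $k$, reread appropriately, yields a correspondence of the concatenation with a constant right homotopy) that it is $2$-homotopic to the constant homotopy $fs$, hence equals the identity in $\pi_1(A,B;f,f)$.

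Associativity is the step I expect to be the most delicate, since a priori the three ways of associating $h_3 * (h_2 * h_1)$ and $(h_3 * h_2) * h_1$ live on different (though canonically isomorphic) iterated pushout cylinders. The plan is \emph{not} to construct an explicit $2$-cylinder comparing them, but instead to note that all three composites correspond—via Lemma~\ref{lemma:hom_quillen}(i) applied piecewise and glued—to the \emph{same} right homotopy obtained by the evident threefold concatenation of the $k_i$'s through a fixed path object of $B$; Lemma~\ref{lemma:hom_quillen}(ii) then forces them into the same $2$-homotopy class. The one technical point to be careful about is that Lemma~\ref{lemma:hom_quillen}(ii) compares left homotopies \emph{with possibly different domain cylinders} through a common $2$-cylinder, which is exactly the generality in which Quillen states it, so the comparison across reassociated cylinders is legitimate. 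With well-definedness, units, inverses, and associativity in hand, $\Pi_1(A,B)$ is a groupoid, and—since the source and target of the composite left homotopy are read off as $f_1$ and $f_3$ by construction—the composition laws match the graph structure of \ref{paragr:pi1_comp}, completing the proof.
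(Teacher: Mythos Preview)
The paper does not give a proof; it simply cites Quillen (Proposition~1 of \cite[Section I.2]{Quillen}). Your strategy---reduce everything to the correspondence characterization of Lemma~\ref{lemma:hom_quillen}---is exactly Quillen's own route, so in spirit you are on target.

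There is, however, a genuine gap in the ``assembling along $A$'' step. If $H_1 : C_1 \to P$ is a correspondence for $(h_1, k_1)$ and $H_2 : C_2 \to P$ for $(h_2, k_2)$, then at the gluing point you have $H_1 i_1 = r f_2$ but $H_2 i_0 = k_2$, and these do not agree unless $k_2$ happens to be the constant right homotopy. So there is no map ``$(H_2, H_1)$'' from $C_2 \amalg_A C_1$ to $P$. The repair is to target the composite path object $P' = (P, p_1) \times_B (p_0, P)$ and build $H : C_2 \amalg_A C_1 \to P'$ with first component equal to $H_1$ on $C_1$ and $r h_2$ on $C_2$, and second component equal to $k_2 s_1$ on $C_1$ and $H_2$ on $C_2$. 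Each component now glues (at $r f_2$ and at $k_2$ respectively), the pullback condition $p_1(\text{first}) = p_0(\text{second})$ holds on both pieces, and one checks directly that $H$ satisfies the four correspondence identities against the concatenated left homotopy $(h_2, h_1)$ and the concatenated right homotopy $(k_1, k_2) : A \to P'$. This padding trick is the actual technical content; the same device is needed in your identity and inverse arguments, where the naive gluing fails for the same reason. Once you have it, your associativity argument (all reassociations correspond to the same threefold right concatenation) goes through as written. A minor slip: in the identity step your concatenated cylinder $(C, i_0) \amalg_A (i_1, C')$ has the gluing reversed relative to paragraph~\ref{paragr:pi1_comp}; as written it would force $f = g$.
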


\begin{proof}
See Proposition 1 of \cite[Section I.2]{Quillen}.
\end{proof}

\begin{tparagr}{Under and over model categories}
If $X$ is an object of $\M$, we will denote by~$X\backslash \M$ the category of
objects of $\M$ under $X$, that is, the category whose objects are pairs $(Y,
u)$ where $Y$ is an object of $\M$ and $u : X \to Y$ is a morphism of $\M$,
and whose morphisms from $(Y, u)$ to $(Y', u')$ are morphisms $v : Y \to Y'$
of $\M$ such that $uv = u'$. This category inherits a structure of
model category from the one on $\M$: a morphism of $X\backslash \M$ is a
cofibration (resp.~a fibration, resp.~a weak equivalence) if
the underlying morphism of $\M$ is a cofibration (resp.~a fibration,
resp.~a weak equivalence). Note that an object $(Y, u)$ is cofibrant
(resp.~fibrant) in $X\backslash \M$ if and only if $u$ is a cofibration of
$\M$ (resp.~if and only if $Y$ is a fibrant object of $\M$).

Dually, if $X$ is an object of $\M$, we will denote by $\M/X$ the category
of objects of $\M$ over $X$. This category is nothing but $(X\backslash
M^\op)^\op$, and all the above statements can be dualized.

In what follows, we will often work in the category $\AAM$. The morphisms $f, g
: A \to B$ induce a morphism $(g, f) : \AA \to B$ and we get an object $(B,
(g, f))$ of $\AAM$. This object is fibrant since $B$ is fibrant in $\M$.
Similarly, from a cylinder object $C$ of $A$, we get an object $(C, (i_1, i_0))$
of $\AAM$. This object is cofibrant since $(i_1, i_0)$ is a cofibration of
$\M$. Each time we will consider $B$ and $C$ as objects of $\AAM$, they will
be endowed with the morphisms we have just defined.
\end{tparagr}

\begin{prop}\label{prop:desc_graph_Pi1}
Let $C$ be a cylinder object of $A$. A morphism $h : C \to B$ of $\M$
defines a morphism $(C, (i_1,i_0)) \to (B,(g,f))$ in $\AAM$ if and only if
$h$ is a homotopy from~$f$ to $g$ in $\M$. This correspondence induces a
bijection
\[
\pi_1(A, B; f, g) \cong [(C, (i_1,i_0)), (B,(g,f))]_{\AAM}.
\]
\end{prop}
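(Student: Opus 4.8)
The plan is to unwind the definition of the category $\AAM$ and then to reduce the statement to the model-categorical facts about under-categories together with Quillen's $\pi_1$ theory recalled above. First I would establish the first assertion by a direct computation: by definition of the under-category, a morphism $(C, (i_1, i_0)) \to (B, (g, f))$ in $\AAM$ is a morphism $h : C \to B$ of $\M$ such that $h \circ (i_1, i_0) = (g, f)$; reading off the two components of this equality over $\AA$ shows that it amounts exactly to the conditions $hi_0 = f$ and $hi_1 = g$, that is, to $h$ being a left homotopy from $f$ to $g$.

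Next I would recall that, since $(i_1, i_0)$ is a cofibration of $\M$, the object $(C, (i_1, i_0))$ is cofibrant in $\AAM$, and that, since $B$ is fibrant in $\M$, the object $(B, (g, f))$ is fibrant in $\AAM$. Hence $[(C, (i_1, i_0)), (B, (g, f))]_{\AAM}$ is the set of morphisms from $(C, (i_1, i_0))$ to $(B, (g, f))$ in $\AAM$ modulo the left homotopy relation, and this relation may be computed using any fixed cylinder object of $(C, (i_1, i_0))$ in $\AAM$.

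The heart of the argument is then to identify these under-categorical notions with their counterparts in $\M$. Since the coproduct of two copies of $(C, (i_1, i_0))$ in $\AAM$ is the pushout $C \amalg_{\AA} C$ (with its common structural map from $\AA$), a cylinder object of $(C, (i_1, i_0))$ in $\AAM$ becomes, after composing its weak equivalence to $(C, (i_1, i_0))$ with $s : C \to A$, exactly a $2$-cylinder object $D$ of $C$ in the sense recalled above. Moreover, under the identification from the first assertion, a left homotopy in $\AAM$ between two morphisms $(C, (i_1, i_0)) \to (B, (g, f))$ built from this cylinder object is precisely a morphism $H : D \to B$ of $\M$ with $Hj_0 = h$ and $Hj_1 = h'$, the compatibility over $\AA$ being automatic because both $h$ and $h'$ restrict to $(g, f)$; in other words, a left $2$-homotopy. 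Combining these two identifications, $[(C, (i_1, i_0)), (B, (g, f))]_{\AAM}$ is the set of left homotopies from $f$ to $g$ (using the cylinder $C$) modulo left $2$-homotopy (using the fixed $2$-cylinder $D$), which is $\pi_1(A, B; f, g)$ by paragraph \ref{paragr:desc_pi1}; and the correspondence of the first assertion evidently induces the claimed bijection.

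The only delicate point --- more bookkeeping than genuine difficulty --- will be this last identification: one must carefully match the structural datum $\AA \to C \amalg_{\AA} C \to D$ of a cylinder object in $\AAM$ with the source and target maps $j_0, j_1 : C \rightrightarrows D$ of a $2$-cylinder object, and check that the source/target and $\AA$-compatibility conditions for a left homotopy in $\AAM$ coincide with those in the definition of a left $2$-homotopy. It is also worth noting, using paragraph \ref{paragr:desc_pi1}, that the resulting quotient does not depend on which $2$-cylinder of $C$ one uses, which legitimizes computing $\pi_1(A, B; f, g)$ with the particular $2$-cylinder produced from the chosen cylinder object of $(C, (i_1, i_0))$ in $\AAM$.
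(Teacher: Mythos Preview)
Your proposal is correct and follows essentially the same approach as the paper: both identify the coproduct in $\AAM$ with the pushout $C \amalg_{\AA} C$, build a cylinder object of $(C,(i_1,i_0))$ in $\AAM$ from a factorization of $C \amalg_{\AA} C \to C$, observe that composing with $s:C\to A$ turns it into a $2$-cylinder of $C$ in $\M$, and then check that left homotopies in $\AAM$ coincide with left $2$-homotopies in $\M$. The paper carries out the same steps with the same ingredients, only organizing the exposition slightly differently (it first fixes the factorization in $\M$ and then reads it in both categories, whereas you start from an arbitrary cylinder in $\AAM$).
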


\begin{proof}
\def\Ci{(C, (i_1, i_0))}
\def\Bgf{(B, (g, f))}
The first assertion is obvious. Let us prove the second one. Since the
objects $\Ci$ and $\Bgf$ are respectively cofibrant and fibrant,
\[ [\Ci, \Bgf]_{\AAM} \] 
is the set of morphisms $\Ci \to \Bgf$ up to left homotopy in $\AAM$.
Moreover, by paragraph \ref{paragr:desc_pi1}, $\pi_1(A, B; f, g)$ is the set
of left homotopies $C \to B$ between $f$ and $g$ up to left $2$-homotopy in
$\M$.

Let $h, h' : \Ci \to \Bgf$ be two morphisms. We have to show that $h$
and~$h'$ are left homotopic in $\AAM$ if and only if they are $2$-homotopic
as homotopies between $f$ and $g$ in $\M$.

Consider the morphism
$(\id{C}, \id{C}) : C \amalg_{A \amalg A} C \to C$ of $\M$. Let
\[
\xymatrix{
C \amalg_{A \amalg A} C \ar[r]^-k & D \ar[r]^-q & C
}
\]
be a factorization of this morphism as a cofibration followed by a weak
equivalence. We get an object $(D, k)$ under $\AA$ and the above
factorization makes $(D, k)$ a cylinder object of $\Ci$ in $\AAM$. The
morphisms $h$ and $h'$ are left homotopic in $\AAM$ if and only if there
exists a left homotopy between them in $\AAM$ using the cylinder $(D, k)$.

Using the weak equivalence $s : C \to A$ of $\M$, we obtain a factorization
\[
\xymatrix{
C \amalg_{A \amalg A} C \ar[r]^-k & D \ar[r]^-{sq} & A
}
\]
in $\M$, making $D$ a $2$-cylinder of $C$ in $\M$. The left homotopies $h$
and $h'$ are left $2$\nobreakdash-homotopic if and only if there exists a
left $2$-homotopy between them in $\M$ using the $2$-cylinder $D$ (see
paragraph \ref{paragr:desc_pi1}).

But it is obvious that a morphism $H : D \to B$ of $\M$ induces a left
homotopy between~$h$ and $h'$ in $\AAM$ if and only if $H$ is a left
$2$-homotopy between $h$ and $h'$ in $\M$. Hence the result.
\end{proof}

\begin{rem}
Let $C$ be a cylinder object of $A$ and let $P$ be a path object of $B$. The
previous proposition and its dual proposition imply that there is a
canonical bijection
\[
[(C, (i_1, i_0)), (B, (g, f))]_{\AAM} \cong [(A, (g, f)), (P, (p_1,
p_0))]_{\MBB}.
\]
Explicitly, a left homotopy $h : C \to B$ from $f$ to $g$ is sent to a
right homotopy $k : A \to P$ corresponding to $h$.
\end{rem}

\begin{lemma}\label{lemma:comp_lr}
Let $B'$ be a second fibrant object of $\M$ and let $v : B \to B'$ be a
morphism of $\M$. Let $C$ be a cylinder object of $A$ and let
$P$ (resp.~$P'$) be a path object of $B$ (resp.~$B'$). Assume
that $r : B \to P$ is a cofibration. Let $w : P \to P'$ be any morphism
making the diagram
\[
\xymatrix@C=3pc{
B \ar[d]_v \ar[r]^r & P \ar[d]_w \ar[r]^-{(p_1, p_0)} & B\times B \ar[d]^{v
\times v} \\
B'\ar[r]_{r'} & P' \ar[r]_-{(p'_1, p'_0)} & B'\times B'  \\
}
\]
commute (such a morphism exists by paragraph \ref{paragr:morph_path_obj}).
Then the induced map 
\[
w\circ - : [(A, (g, f)), (P, (p_1, p_0))]_{\MBB} \to [(A, (vg, vf)), (P',
(p'_1, p'_0))]_{\MBBp}
\]
does not depend on the choice of $w$, and the
square
\[
\xymatrix{
[(C, (i_1, i_0)), (B, (g, f))]_{\AAM} \ar[d]_{v \circ -} \ar[r]^-{\sim} & [(A, (g, f)),
(P, (p_1, p_0))]_{\MBB} \ar[d]^{w \circ -} \\
[(C, (i_1, i_0)), (B', (vg, vf))]_{\AAM} \ar[r]^-{\sim} & [(A, (vg, vf)),
(P', (p'_1, p'_0))]_{\MBBp},
}
\]
where the horizontal arrows are the bijections of the previous remark,
is commutative.
\end{lemma}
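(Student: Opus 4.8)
The plan is to prove that the square commutes for an arbitrary admissible $w$ and then to deduce the independence of $w\circ-$ from this. Recall first that such a $w$ exists because $r:B\to P$ is a cofibration (paragraph \ref{paragr:morph_path_obj}); from the commutativity of the displayed diagram I would extract the only facts about $w$ the argument needs, namely $p'_0w=vp_0$ and $p'_1w=vp_1$ (the two components of the right-hand square) together with $wr=r'v$ (the left-hand square). Note that the two horizontal bijections of the square are those of the previous remark — each sending a left homotopy to a corresponding right homotopy — and in particular do not depend on $w$.

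The core computation is this. Represent a class in the upper-left corner $[(C,(i_1,i_0)),(B,(g,f))]_{\AAM}$ by a left homotopy $h:C\to B$ from $f$ to $g$, and choose, by Lemma \ref{lemma:hom_quillen}(1), a correspondence $H:C\to P$ between $h$ and a right homotopy $k:A\to P$. Running along the top edge and then down, $[h]$ goes to $[k]$ and then to $[wk]$; running down and then along the bottom edge, $[h]$ goes to $[vh]$, where $vh:C\to B'$ is a left homotopy from $vf$ to $vg$. I would then check that $wH:C\to P'$ is a correspondence between $vh$ and $wk$: its four defining identities, $p'_0(wH)=vh$, $(wH)i_0=wk$, $p'_1(wH)=(vg)s$ and $(wH)i_1=r'(vg)$, follow immediately from the defining identities of $H$ together with the three identities of the previous paragraph (for instance $p'_0(wH)=vp_0H=vh$ and $(wH)i_1=w(rg)=r'(vg)$). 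Consequently, by the description of the bottom bijection in the previous remark — legitimate thanks to Lemma \ref{lemma:hom_quillen} and its dual, which guarantee that description is independent of the chosen correspondence — the class $[vh]$ is sent to $[wk]$ as well. Thus both composites around the square carry $[h]$ to $[wk]$, so the square commutes, for every admissible $w$.

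Finally, the composite $(\text{bottom bijection})\circ(v\circ-)\circ(\text{top bijection})^{-1}$ manifestly does not involve $w$, and the commutativity just established identifies it with $w\circ-$; hence $w\circ-$ is independent of the choice of $w$. I expect no genuine obstacle here: the one point to handle with a little care is the interface with the previous remark — that exhibiting a single correspondence $H$ (resp.\ $wH$) already pins down the value of the relevant bijection on a homotopy class — and this is precisely what Lemma \ref{lemma:hom_quillen} and its dual supply. Everything else reduces to elementary bookkeeping with the structural maps $i_\bullet$, $s$, $r$, $p_\bullet$ of cylinder and path objects.
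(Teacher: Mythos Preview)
Your proof is correct and follows essentially the same approach as the paper: both arguments show the square commutes by taking a correspondence $H$ between $h$ and $k$ and verifying that $wH$ is a correspondence between $vh$ and $wk$, then deduce the independence of $w\circ-$ from this commutativity. You simply spell out the four identities for $wH$ that the paper declares ``immediate.''
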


\begin{proof}
It suffices to show that the square of the statement is
commutative. Let $h : C \to B$ be a left homotopy from $f$ to $g$ and let
$k : A \to P$ be a right homotopy corresponding to $h$ via a correspondence
$H : C \to P$. We have to show that the left homotopy $vh : C \to B'$ from
$vf$ to $vg$ corresponds to the right homotopy $wk : A \to P'$. It is
immediate that $wH : C \to P'$ is the desired correspondence.
\end{proof}

\begin{tparagr}{Functoriality of $\Pi_1$}
\label{paragr:def_Pi_1}
Let $B'$ be a second fibrant object of $\M$ and let $v : B \to B'$ be a
morphism of $\M$. The morphism $v$ induces a morphism of graphs
 \[ \Pi_1(A, v) : \Pi_1(A, B) \to \Pi_1(A, B') \]
by sending a morphism $f : A \to B$ of $\M$ to $vf : A \to B'$, and a left
homotopy $h : C \to B$ representing an element of $\pi_1(A, B; f, g)$ to the
left homotopy $vh : C \to B'$. It is easy to see that this morphism of
graphs is a functor. Note that in the bijection of
Proposition~\ref{prop:desc_graph_Pi1}, the association $h \mapsto vh$
corresponds to the post-composition by $v$ seen as a morphism of
$\Ho(\AAM)$.

Dually, if $A'$ is a second cofibrant object and $u : A' \to A$ is a
morphism of $\M$, then $u$ induces a functor
\[ \Pi_1(u, B) : \Pi_1(A, B) \to \Pi_1(A', B). \]
Note that we need to consider \emph{right} homotopies to define this functor.

By Lemma 3 of \cite[Section I.2]{Quillen}, the square
\[
\xymatrix{
\Pi_1(A, B) \ar[r] \ar[d] & \Pi_1(A', B) \ar[d] \\
\Pi_1(A, B') \ar[r] & \Pi_1(A', B')
}
\]
is commutative. This also follows easily from Lemma \ref{lemma:comp_lr}.
We can thus define a functor
\[ \Pi_1(u, v) : \Pi_1(A, B) \to \Pi_1(A', B'). \]
\end{tparagr}

\begin{prop}\label{prop:Pi_1_we}
Let $u : A' \to A$ be a weak equivalence between cofibrant objects and let $v
: B \to B'$ be a weak equivalence between fibrant objects. Then the functor
\[ \Pi_1(u, v) : \Pi_1(A, B) \to \Pi_1(A', B') \]
is an equivalence of categories.
\end{prop}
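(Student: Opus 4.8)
The plan is to reduce to two dual special cases and then feed everything into the slice-category description of $\Pi_1$ established in Proposition~\ref{prop:desc_graph_Pi1}. By the commutative square of paragraph~\ref{paragr:def_Pi_1}, the functor $\Pi_1(u, v)$ factors as
\[
\Pi_1(A, B) \xrightarrow{\Pi_1(u, \id{B})} \Pi_1(A', B) \xrightarrow{\Pi_1(\id{A'}, v)} \Pi_1(A', B'),
\]
and since a composite of equivalences of categories is an equivalence, it suffices to prove that $\Pi_1(\id{A'}, v)$ and $\Pi_1(u, \id{B})$ are equivalences. The second of these is dual to the first: passing to $\M^{\op}$ exchanges cofibrant and fibrant objects, exchanges cylinder and path objects, and exchanges left and right homotopies, and --- using that $\pi_1$ may equally well be computed with right homotopies (paragraph~\ref{paragr:desc_pi1}) --- it matches $\Pi_1$ over $\M$ with $\Pi_1$ over $\M^{\op}$ after reversing arrows. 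So I would only prove the following statement, letting its dual come for free: \emph{if $v : B \to B'$ is a weak equivalence between fibrant objects and $A$ is cofibrant, then $\Pi_1(\id{A}, v) : \Pi_1(A, B) \to \Pi_1(A, B')$ is an equivalence of categories.}

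For essential surjectivity I would argue as follows. Since $\Pi_1(A, B)$ is a groupoid, two of its objects $f, g : A \to B$ are isomorphic if and only if $\pi_1(A, B; f, g) \neq \varnothing$, i.e.\ if and only if $f$ and $g$ are left homotopic, i.e.\ --- because $A$ is cofibrant and $B$ is fibrant --- if and only if they have the same image in $[A, B]_\M$. Hence $\Pi_1(\id{A}, v)$ is essentially surjective if and only if the map $v \circ - : [A, B]_\M \to [A, B']_\M$ hits every class, and this map is in fact bijective because $v$, being a weak equivalence, is an isomorphism in $\Ho(\M)$.

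For fullness and faithfulness I would fix a cylinder object $C$ of $A$. By Proposition~\ref{prop:desc_graph_Pi1} there are bijections
\[
\pi_1(A, B; f, g) \cong [(C, (i_1, i_0)), (B, (g, f))]_{\AAM},
\qquad
\pi_1(A, B'; vf, vg) \cong [(C, (i_1, i_0)), (B', (vg, vf))]_{\AAM},
\]
and by the remark in paragraph~\ref{paragr:def_Pi_1} the map on hom-sets induced by $\Pi_1(\id{A}, v)$ corresponds, under these bijections, to post-composition with the morphism $(B, (g, f)) \to (B', (vg, vf))$ of $\AAM$ determined by $v$. Now that morphism is a weak equivalence of $\AAM$ (its underlying morphism of $\M$ is $v$), its source and target are fibrant in $\AAM$ (since $B$ and $B'$ are fibrant in $\M$), and $(C, (i_1, i_0))$ is cofibrant in $\AAM$; hence that morphism becomes an isomorphism in $\Ho(\AAM)$ and post-composition with it is a bijection on $[(C, (i_1, i_0)), -]_{\AAM}$. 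This yields the bijection $\pi_1(A, B; f, g) \to \pi_1(A, B'; vf, vg)$, which is precisely fullness and faithfulness of $\Pi_1(\id{A}, v)$.

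I do not expect a genuine obstacle here: the real content has already been isolated in Proposition~\ref{prop:desc_graph_Pi1} and in the standard fact that a weak equivalence between fibrant objects induces a bijection on morphisms in the homotopy category out of a cofibrant object. The only points that need a careful word are the reduction by duality --- one must check that reversing arrows really identifies $\Pi_1$ over $\M$ with $\Pi_1$ over $\M^{\op}$, including the direction conventions for homotopies fixed earlier --- and the observation that the comparison morphism has fibrant source and target in the under-category $\AAM$, which is immediate from the description of fibrant objects of $X \backslash \M$.
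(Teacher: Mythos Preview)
Your proof is correct and follows essentially the same approach as the paper's: reduce by duality to the case where $u$ is an identity, and then observe via Proposition~\ref{prop:desc_graph_Pi1} that on hom-sets $\Pi_1(\id{A}, v)$ is post-composition by $v$ in $\Ho(\AAM)$, hence a bijection since $v$ is a weak equivalence. Your version is in fact more complete than the paper's, which leaves essential surjectivity implicit; your explicit argument for it (via the bijection $v\circ - : [A, B]_\M \to [A, B']_\M$) is exactly what one should add.
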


\begin{proof}
  By definition of $\Pi_1(u, v)$ and by duality, we can assume that $u$ is an
  identity. But in the bijection of Proposition \ref{prop:desc_graph_Pi1},
  $\Pi_1(A, v)$ is the post-composition by $v$ in the homotopy category of
  $\AAM$. Since $v$ is a weak equivalence, this post-composition is a
  bijection, hence the result.
\end{proof}

\section{Quillen homotopy groups in a model category}

In this section, we fix a model category $\M$ in which every object is
fibrant.

\begin{tparagr}{Connected components}
Let $X$ be an object of $\M$. The set $\pi_0(X)$ of
\ndef{connected components} of $X$ is defined by
\[
  \pi_0(X) = [\ast, X].
\]
This set can be described in term of homotopy classes
thanks to the formula
\[
  \pi_0(X) \cong [\Dn{0}, X],
\]
where $\Dn{0}$ is any cofibrant contractible object of $\M$. It is obvious
that $\pi_0$ defines a functor from $\M$ to the category of sets and that
this functor sends weak equivalences to bijections.
\end{tparagr}

\begin{tparagr}{Based objects}
Let $X$ be an object of $\M$. A \ndef{base point} $x$ of $X$ is a morphism
$\Dn{0} \to X$ of $\M$, where~$\Dn{0}$ is cofibrant and weakly contractible.
We will say that an object of $\M$ endowed with a base point is a
\ndef{based object} of $\M$. 

Let $(X, x : \Dn{0} \to X)$ and $(X', x' : \Dnp{0} \to X)$ be two
based objects of $\M$. A \ndef{morphism of based objects} from $(X, x)$ to $(X',
x')$ is given by morphisms $f : X \to X'$ and $f_0 : \Dn{0} \to
\Dnp{0}$ of $\M$ making the square
\[
\xymatrix{
\Dn{0} \ar[d]_{f_0} \ar[r]^x & X \ar[d]^f \\
\Dnp{0} \ar[r]_{x'} & X'
}
\]
commute. Note that such an $f_0$ is necessarily a homotopy equivalence since
$\Dn{0}$ and $\Dnp{0}$ are both cofibrant, fibrant and weakly contractible.
We will denote such a morphism of based objects by $(f, f_0)$.

We adopt the following convention on notation: if $(X, x)$ (resp.~ $(X',
x')$) is a based object of $\M$, then the source of $x$ (resp.~ of $x'$) will
be denoted by $\Dn{0}$ (resp.~by $\Dnp{0}$), unless otherwise specified.
\end{tparagr}

\begin{tparagr}{The fundamental group}
\label{paragr:pi_1}
Let $(X, x : \Dn{0} \to X)$ be a based object of $\M$. The \ndef{fundamental
group} of $(X, x)$ is the group $\pi_1(\Dn{0}, X; x, x)$. We will denote it
by $\pi_1(X, x)_\M$, or briefly, by $\pi_1(X, x)$.

Let $(f, f_0) : (X, x) \to (X', x')$ be a morphism of based objects. The
morphism $(f, f_0)$ can be decomposed as
\[ (X, x) \xrightarrow{(f, \id{\Dn{0}})} (X', x'f_0) \xrightarrow{(\id{X'},
f_0)} (X', x'). \]
By applying the functor $\Pi_1$, we get a diagram
\[ \pi_1(X, x) \to \pi_1(X', x'f_0) \leftarrow \pi_1(X', x'). \]
Since $f_0$ is a weak equivalence, the right arrow is an isomorphism and, 
using its inverse, we get a morphism
\[ \pi_1(f, f_0) : \pi_1(X, x) \to \pi_1(X', x'). \]
One can easily check that this definition makes $\pi_1$ a functor from the
category of based objects of $\M$ to the category of groups.

If follows from Proposition \ref{prop:Pi_1_we} that if $f$ is a weak
equivalence, then $\pi_1(f, f_0)$ is an isomorphism. Moreover, by paragraph
\ref{paragr:def_Pi_1}, if $\Dn{0} = \Dnp{0}$ and $f, g : (X, x) \to (X',
x')$ are two morphisms of $\Dn{0}\backslash\M$ inducing the same morphism in
$\Ho(\Dn{0}\backslash\M)$, then we have 
\[ \pi_1(f, \id{\Dn{0}}) = \pi_1(g, \id{\Dn{0}}). \]
\end{tparagr}

\begin{tparagr}{Loop objects}
Let $(X, x)$ be a based object of $\M$ and let $P$ be a path object of~$X$
such that $r : X \to P$ is a cofibration. The \ndef{loop object} of $X$
based at $x$ (using $P$) is defined as the pullback of the diagram
\[ \Dn{0} \xrightarrow{(x,x)} X\times X \xleftarrow{(p_1, p_0)} P.  \]
We will denote it by $\Omega^P_x X$. Since every object of $\M$ is fibrant and
$(p_1, p_0)$ is a fibration, the cartesian square defining $\Omega^P_x X$ is actually
a homotopy cartesian square and $\Omega^P_x X$ is a model for the homotopy
pullback
\[ \Dn{0} \xrightarrow{x} X \xleftarrow{x} \Dn{0}. \]
In particular, the image of $\Omega^P_x X$ in $\Ho(\M)$ does not depend on
$P$ up to a canonical isomorphism. 

One can explicitly construct this canonical isomorphism. Let $P'$ be a
second path object of $X$ and let $g : P \to P'$ be a morphism making the 
diagram
\[
\xymatrix@C=3pc{
X \ar[d]_{\id{X}} \ar[r]^r & P \ar[d]_g \ar[r]^-{(p_1, p_0)} & A\times A
\ar[d]^{\id{X \times X}}\\
X\ar[r]_{r'} & P' \ar[r]_-{(p'_1, p'_0)} & A\times A  \\
}
\]
commute (such a morphism exists by paragraph \ref{paragr:morph_path_obj}). Then
the commutative diagram
\[
\xymatrix@C=3.5pc{
\Dn{0} \ar[d]_{\id{\Dn{0}}} \ar[r]^-{(x, x)} & X \times X \ar[d]_{\id{X\times
X}} & \ar[l]_-{(p_1, p_0)} P \ar[d]^g\\
\Dn{0} \ar[r]_-{(x, x)} & X \times X & \ar[l]^-{(p'_1, p'_0)} P'
}
\]
induces a morphism $f$ from $\Omega^P_x X$ to $\Omega^{P'}_x X$.
Since the pullbacks defining these objects are homotopy pullbacks and $g$ is
a weak equivalence, $f$ is also a weak equivalence. The morphism $f$ hence
induces our canonical isomorphism in $\Ho(\M)$.

The commutative diagram
\[
\xymatrix{
\Dn{0} \ar[d]_{\id{\Dn{0}}} \ar[r]^x & X \ar[r]^r & P \ar[d]^{(p_1, p_0)} \\
\Dn{0} \ar[rr]_{(x, x)} & & X \times X
}
\]
induces a morphism $c_x : \Dn{0} \to \Omega^P_x X$. The loop object
$\Omega^P_x X$ is thus endowed with the structure of a based object $(\Omega^P_x
X, c_x)$. As above, the image of this object in $\Ho(\Dn{0}\backslash\M)$
does not depend on $P$ up to a canonical isomorphism. For this reason, we
will often denote this object by $(\Omega_x X, c_x)$, without reference to
$P$.

Let $(f, f_0) : (X, x) \to (X', x')$ be a morphism of based objects of $\M$. 
Consider the decomposition of $(f, f_0)$ as
\[ (X, x) \xrightarrow{(f, \id{\Dn{0}})} (X', x'f_0) \xrightarrow{(\id{X'},
f_0)} (X', x'). \]
The first morphism can be seen as a morphism of $\Dn{0}\backslash\M$. Since
the objects $\Omega_x X$ and~$\Omega_{x'f_0} X'$ are defined as homotopy
pullbacks, by the theory of derived functors, the morphism~$f$ induces a
morphism from $(\Omega_x X, c_x)$ to $(\Omega_{x'f_0} X, c_{x'f_0})$
in~$\Ho(\Dn{0}\backslash \M)$.

This morphism can be described explicitly in the following way.
Let $P$ (resp.~$P'$) be a path object of $X$ (resp.~of $X'$) and
let $g : P \to P'$ be a morphism making the diagram
\[
\xymatrix@C=3pc{
X \ar[d]_f \ar[r]^r & P \ar[d]_g \ar[r]^-{(p_0, p_1)} & X\times X \ar[d]^{f
\times f} \\
Y\ar[r]_{r'} & P' \ar[r]_-{(p'_1, p'_0)} & Y\times Y  \\
}
\]
commute (such a morphism exists by paragraph \ref{paragr:morph_path_obj}).
Then the commutative diagram
\[
\xymatrix@C=3.5pc{
\Dn{0} \ar[d]_{\id{\Dn{0}}} \ar[r]^-{(x, x)} & X \times X \ar[d]_{f\times f}
& \ar[l]_-{(p_1, p_0)} P \ar[d]^g\\
\Dn{0} \ar[r]_-{(x', x')f_0} & X' \times X' & \ar[l]^-{(p'_1, p'_0)} P'
}
\]
induces a morphism $\Omega_{\Dn{0}} f : \Omega_x X \to \Omega_{x'f_0}
X'$ and $(\Omega_{\Dn{0}} f, \id{\Dn{0}})$ is a morphism of based objects
from $(\Omega_x X, c_x)$ to $(\Omega_{x'f_0} X', c_{x'f_0})$. This morphism induces
our canonical morphism in $\Ho(\Dn{0}\backslash\M)$.

The second morphism $(\id{X'}, f_0)$ gives rise to a commutative diagram
\[
\xymatrix@C=3.5pc{
\Dn{0} \ar[d]_{f_0} \ar[r]^-{(x', x')f_0} & X' \times X' \ar[d]_{\id{X'\times
X'}}
& \ar[l]_-{(p'_1, p'_0)} P' \ar[d]^{\id{P'}}\\
\Dnp{0} \ar[r]_-{(x', x')} & X' \times X' & \ar[l]^-{(p'_1, p'_0)} P'
\pbox{.}
}
\]
This diagram induces a canonical morphism $\Omega_{f_0} X' : \Omega_{x'f_0} X' \to
\Omega_{x'} X'$, and $(\Omega_{f_0} X', f_0)$ is a morphism of based objects
from $(\Omega_{x'f_0} X', c_{x'f_0})$ to $(\Omega_{x'} X', c_{x'})$.

We set
\[ \Omega_{f_0} f = \Omega_x X \xrightarrow{\Omega_{\Dn{0}} f}
\Omega_{x'f_0} X' \xrightarrow{\Omega_{f_0} X'} \Omega_{x'} X'. \]
This morphism induces a canonical morphism in $\Ho(\M)$. Moreover,
$(\Omega_{f_0} f, f_0)$ is a morphism of based objects from $(\Omega_x X,
c_x)$ to $(\Omega_{x'} X', c_{x'})$
\end{tparagr}

\begin{tparagr}{Higher homotopy groups}
Let $(X, x)$ be a based object of $\M$. Since $(\Omega_x X, c_x)$ is
well-defined, up to a canonical isomorphism coming from
$\Dn{0}\backslash\M$, as an object $\Ho(\Dn{0}\backslash\M)$,
the group~$\pi_1(\Omega_x X, c_x)$ is well-defined by the last point of
paragraph \ref{paragr:pi_1}.

Let $(f, f_0) : (X, x) \to (X', x')$ be a morphism of based objects of $\M$. We
claim that the morphism $\pi_1(\Omega_x f, f_0)$ is well-defined. Recall
that by definition, $\Omega_{f_0} f$ is the composition of
\[ \Omega_x X \xrightarrow{\Omega_{\Dn{0}} f}
\Omega_{x'f_0} X' \xrightarrow{\Omega_{f_0} X'} \Omega_{x'} X', \]
where the first morphism is well-defined as a morphism of
$\Ho(\Dn{0}\backslash\M)$ and the second morphism is well-defined as a
morphism of $\M$. It follows that
$\pi_1(\Omega_{f_0} f, f_0)$ is the composition of
\[ \pi_1(\Omega_x X, c_x)
\xrightarrow{\pi_1(\Omega_{\Dn{0}} f, \id{\Dn{0}})} 
\pi_1(\Omega_{x'f_0} X', x'f_0) \xrightarrow{\pi_1(\Omega_{f_0} X', f_0)}
\pi_1(\Omega_{x'} X', c_{x'}),
\]
where both morphisms are well-defined (the left one is well-defined by the
last point of paragraph~\ref{paragr:pi_1}). One easily checks that
$\pi_1(\Omega_{f_0} f, f_0)$ is functorial in $(f, f_0)$.

For $n \ge 2$, we define, by induction on $n$, the \ndef{$n$-th homotopy
group} of $(X, x)$ as
\[ \pi_n(X, x) = \pi_{n-1}(\Omega_x X, c_x). \]
It follows from the above discussion that $\pi_n$ is a well-defined functor
from the category of based objects of $\M$ to the category of (abelian)
groups.

Let $(f, f_0) : (X, x) \to (X', x')$ be a morphism of based objects of $\M$.
It is immediate, by induction on $n \ge 1$, that if $f$ is a weak
equivalence, then
\[ \pi_n(f, f_0) : \pi_n(X, x) \to \pi_n(X', x') \]
is an isomorphism.
\end{tparagr}

\begin{tparagr}{Functoriality of $[\Dn{0}, \Omega_x X]$}
Let $(f, f_0) : (X, x) \to (X', x')$ be a morphism of based objects of $\M$.
The morphisms $\Omega_{f_0} f : \Omega_x X \to \Omega_{x'} X'$ and $f_0 :
\Dn{0} \to \Dnp{0}$ induce a diagram
\[ [\Dn{0}, \Omega_x X] \to [\Dn{0}, \Omega_{x'} X'] \leftarrow [\Dnp{0},
\Omega_{x'} X']. \]
Since $f_0$ is a weak equivalence, the right arrow is a bijection. 
Using the inverse of this bijection, we get 
a map
\[ [f, f_0] : [\Dn{0}, \Omega_x X] \to [\Dnp{0}, \Omega_{x'} X']. \]
Note that the inverse of the bijection is induced by any inverse of $f_0$ up
to homotopy (remember that $\Dn{0}$ and $\Dnp{0}$ are fibrant and
cofibrant). The morphism $[f, f_0]$ is easily seen to be functorial in $(f,
f_0)$.
\end{tparagr}

\begin{prop}\label{prop:pi_1_Omega}
Let $(X, x : \Dn{0} \to X)$ be a based object of $\M$.
Then there exists a canonical bijection
\[ \pi_1(X, x) \cong [\Dn{0}, \Omega_x X] = \pi_0(\Omega_x X), \]
natural in $(X, x)$.
\end{prop}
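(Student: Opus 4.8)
The plan is to obtain the bijection by combining the slice-category description of $\pi_1$ from Section~6 with the pullback description of the loop object. Fix a path object $P$ of $X$ with $r : X \to P$ a cofibration, so that we may take $\Omega_x X$ to be the loop object $\Omega^P_x X = \Dn{0} \times_{X \times X} P$, i.e.\ the pullback of $\Dn{0} \xrightarrow{(x, x)} X \times X \xleftarrow{(p_1, p_0)} P$; write $\pi : \Omega^P_x X \to \Dn{0}$ for the projection. By definition $\pi_1(X, x) = \pi_1(\Dn{0}, X; x, x)$, and the dual of Proposition~\ref{prop:desc_graph_Pi1} (with $A = \Dn{0}$, $B = X$, $f = g = x$) provides a canonical bijection
\[ \pi_1(X, x) \cong [(\Dn{0}, (x, x)), (P, (p_1, p_0))]_{\MXX}. \]

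Next I would pass to $\M/\Dn{0}$ using the Quillen adjunction $(x, x)_! \dashv (x, x)^*$ between $\M/\Dn{0}$ and $\MXX$, where $(x, x)_!$ is post-composition with $(x, x)$ (it preserves cofibrations and weak equivalences, hence trivial cofibrations, as both are detected on underlying morphisms) and $(x, x)^*$ is pullback along $(x, x)$. Here $(x, x)_!(\Dn{0}, \id{\Dn{0}}) = (\Dn{0}, (x, x))$ and $(x, x)^*(P, (p_1, p_0)) = (\Omega^P_x X, \pi)$; moreover $(\Dn{0}, \id{\Dn{0}})$ is cofibrant in $\M/\Dn{0}$ (because $\Dn{0}$ is cofibrant) and $(P, (p_1, p_0))$ is fibrant in $\MXX$ (because $(p_1, p_0)$ is a fibration). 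The derived adjunction therefore gives
\[ [(\Dn{0}, (x, x)), (P, (p_1, p_0))]_{\MXX} \cong [(\Dn{0}, \id{\Dn{0}}), (\Omega^P_x X, \pi)]_{\M/\Dn{0}}. \]
To finish, I would observe that $\Dn{0} \to \ast$ is a trivial fibration (a fibration since every object of $\M$ is fibrant, a weak equivalence since $\Dn{0}$ is weakly contractible), hence so is its pullback $\Omega^P_x X \times \Dn{0} \to \Omega^P_x X$; consequently the morphism $\Omega^P_x X \to \Omega^P_x X \times \Dn{0}$ with components $\id{\Omega^P_x X}$ and $\pi$ is a section of that trivial fibration and so a weak equivalence, giving a weak equivalence $(\Omega^P_x X, \pi) \to (\Omega^P_x X \times \Dn{0}, \mathrm{pr}_2)$ in $\M/\Dn{0}$. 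Combining this with the Quillen adjunction between $\M$ and $\M/\Dn{0}$ given by the forgetful functor and $- \times \Dn{0}$ (the latter being pullback along $\Dn{0} \to \ast$, hence right Quillen), one obtains
\[ [(\Dn{0}, \id{\Dn{0}}), (\Omega^P_x X, \pi)]_{\M/\Dn{0}} \cong [\Dn{0}, \Omega^P_x X]_\M = \pi_0(\Omega_x X). \]
Chaining the three displayed bijections proves the first assertion.

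For naturality, I would decompose a morphism $(f, f_0) : (X, x) \to (X', x')$ of based objects as $(\id{X'}, f_0) \circ (f, \id{\Dn{0}})$, as in paragraph~\ref{paragr:pi_1}, and verify that each factor is intertwined with the corresponding map $[f, f_0]$ through the chain of bijections above. For $(f, \id{\Dn{0}})$ this uses Lemma~\ref{lemma:comp_lr} together with the remark in paragraph~\ref{paragr:def_Pi_1}: these identify the action of $f$ on $\pi_1$ with post-composition by a morphism of path objects $w : P \to P'$ lying over $f \times f$, and such a $w$ induces, via the pullbacks, precisely the morphism $\Omega_{\Dn{0}} f$ entering the definition of $[f, f_0]$; the last two bijections are natural in the slice object, resp.\ in the $\M$-object. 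For $(\id{X'}, f_0)$ one uses instead that the dual of Proposition~\ref{prop:desc_graph_Pi1} and the pullback functors are natural in the first variable as the base point varies along $f_0$, so that the induced map is the one built from $\Omega_{f_0} X'$ and from inverting the homotopy equivalence $f_0$, again matching the definition of $[f, f_0]$. Composing the two factors yields naturality of the bijection.

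The step I expect to be the main obstacle is this last, naturality, step: three separately-defined functorialities — that of $\Pi_1$ (via left, resp.\ right, homotopies), that of the loop object (via morphisms of path objects and change of base point), and that of $[\Dn{0}, \Omega_x X]$ (via inverting $f_0$) — must be matched along the chain of identifications, and one must keep the relevant cofibrancy and fibrancy hypotheses in the various slice model structures in force throughout. Each individual compatibility is routine once the appropriate lemma is invoked, but assembling them without circularity is the delicate part.
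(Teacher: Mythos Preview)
Your proof is correct and takes a genuinely different, more abstract route than the paper's. Both arguments start from the same identification $\pi_1(X,x) \cong [(\Dn{0},(x,x)),(P,(p_1,p_0))]_{\MXX}$ coming from the dual of Proposition~\ref{prop:desc_graph_Pi1}, and both end up producing the \emph{same} explicit map, namely $u \mapsto (u,\id{\Dn{0}}) : \Dn{0} \to \Omega^P_x X$. The difference is in how bijectivity is established. The paper argues by hand: it checks that $u \mapsto (u,\id{\Dn{0}})$ is well-defined on homotopy classes, then proves surjectivity and injectivity by two explicit lifting arguments, each time exploiting that $\Dn{0} \to \ast$ is a trivial fibration to rectify the second component of a morphism $\Dn{0} \to \Omega^P_x X$ to the identity. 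You instead factor the passage from $\MXX$ to $\M$ through $\M/\Dn{0}$ via two derived adjunctions (base change along $(x,x)$, then forget/product along $\Dn{0} \to \ast$), with the intermediate weak equivalence $(\Omega^P_x X,\pi)\to(\Omega^P_x X\times\Dn{0},\mathrm{pr}_2)$ playing the role of the paper's lifting steps. Your approach is cleaner and makes the role of the hypothesis ``$\Dn{0}$ is fibrant and weakly contractible'' transparent; the paper's approach is more self-contained, requiring no derived-adjunction machinery. For naturality both proofs proceed by the same decomposition $(f,f_0)=(\id{X'},f_0)\circ(f,\id{\Dn{0}})$ and invoke Lemma~\ref{lemma:comp_lr} for the first factor; your sketch of the second factor is a little terse but is correct once unwound.
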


\begin{proof}
Let $P$ be a path object of $X$ such that $r : P \to X$ is a cofibration.
Recall that 
\[ 
\Omega_x X = \Omega^P_x X = (P, (p_1, p_0)) \times_{X\times X}
((x, x), \Dn{0}).
\]
If $Y$ is an object of $\M$, a morphism $f : Y \to \Omega_x X$ of $\M$ is
hence given by a pair of morphisms $u : Y \to P$ and $v : Y \to \Dn{0}$ of
$\M$ such that $(p_1, p_0)u = (x, x)v$. We will denote by $(u, v)$ this
morphism.

By the dual of Proposition \ref{prop:desc_graph_Pi1}, we have a canonical
bijection
\[
\pi_1(X, x) \cong [(\Dn{0}, (x, x)), (P, (p_1, p_0))]_{\MXX}.
\]
Let
\[
m : [(\Dn{0}, (x, x)), (P, (p_1, p_0))]_{\MXX} \to [\Dn{0}, \Omega_xX]
\]
be the map sending a morphism
\[ u : (\Dn{0}, (x, x)) \to (P, (p_1, p_0)) \]
to the morphism 
\[ (u, \id{\Dn{0}}) : \Dn{0} \to \Omega_x X. \]
Let us show that this map is well-defined. Let $C$ be a cylinder
object of $\Dn{0}$. Then $(C, (x, x)s)$ is a cylinder object of $(\Dn{0},
(x, x))$ in $\MXX$. Thus, a left homotopy $h : (C, (x, x)s) \to (P, (p_1,
p_0))$ of $\MXX$ from a morphism $u$ to a morphism $u'$ induces a left
homotopy $(h, s) : C \to \Omega_xX$ of $\M$ from the morphism $(u,
\id{\Dn{0}})$ to the morphism $(u', \id{\Dn{0}})$.

Let us show that the map $m$ is surjective.
Let $(u, v) : \Dn{0} \to \Omega_xX$ be a morphism of~$\M$.
We have to show that $(u, v)$ is left homotopic to $(u', \id{\Dn{0}})$ in
$\M$ for some morphism $u' : (\Dn{0}, (x, x)) \to (P, (p_1, p_0))$ of
$\MXX$. Consider the commutative square
\[
\xymatrix@C=3pc@R=2pc{
  \Dn{0} \amalg \Dn{0} \ar[d]_{(i_1, i_0)} \ar[r]^-{(\id{\Dn{0}}, v)} & \Dn{0} \ar[d] \\
  C \ar[r] & \ast \pbox{.}
}
\]
Since $\Dn{0}$ is fibrant and weakly contractible, this square admits a
lifting $h : C \to \Dn{0}$. Consider now the commutative square
\[
\xymatrix{
\Dn{0} \ar[d]_{i_0} \ar[rr]^u & & P  \ar[d]^{(p_1, p_0)} \\
C \ar[r]_{h} &  \Dn{0} \ar[r]_-{(x, x)} & X \times X \pbox{.}
}
\]
Since $\Dn{0}$ is cofibrant, $i_0$ is a trivial cofibration and the square
admits a lifting $k : C \to P$.
Set $u' = ki_1$. The morphism $u'$ induces a morphism $u' : (\Dn{0}, (x, x))
\to (P, (p_1, p_0))$ of~$\MXX$, and the morphisms $h$ and $k$ induce a
morphism $(k, h) : C \to \Omega_x X$ which is a left homotopy of $\M$ from
$(u, v)$ to $(u', \id{\Dn{0}})$.

Let us now show that $m$ is injective. Let $u, u' : (\Dn{0}, (x, x)) \to (P,
(p_1, p_0))$ be two morphisms of $\MXX$. Suppose $(k, h) : C \to \Omega_x X$
is a left homotopy of $\M$ from~$(u, \id{\Dn{0}})$ to $(u', \id{\Dn{0}})$.
We have to show that $u$ and $u'$ are left homotopic in $\MXX$. Let $C'$
be a cylinder object of $C$ in $\M$. Consider the
commutative square
\[
\xymatrix@C=3pc@R=2pc{
  C \amalg C \ar[d]_{(i'_1, i'_0)} \ar[r]^-{(s, h)} & \Dn{0} \ar[d] \\
  C' \ar[r] & \ast \pbox{.}
}
\]
This square admits a lifting $H : C' \to \Dn{0}$ for the same reasons as
above. Consider now the commutative square
\[
\xymatrix{
C \ar[d]_{i'_0} \ar[rr]^k & & P  \ar[d]^{(p_1, p_0)} \\
C' \ar[r]_{H} &  \Dn{0} \ar[r]_-{(x, x)} & X \times X \pbox{.}
}
\]
Since $\Dn{0}$ is cofibrant, $C$ is cofibrant and $i'_0 : C \to C'$
is a trivial cofibration. The square hence admits a lifting
$K : C' \to P$.  The morphism $k' = Ki'_1 : C \to P$ induces a morphism $k'
: (C, (x, x)s) \to (P, (p_1, p_0))$ which is a left homotopy of $\MXX$
from~$k'i_0$ to $k'i_1$. But we have
\[
k'i_0 = Ki'_1i_0 = Ki'_0i_0 = ki_0 = u,
\]
and a similar calculation shows that $k'i_1 = u'$.

Finally, let us show the naturality of this bijection. Let $(f, f_0) : (X,
x) \to (X, x')$ be a morphism of based objects. By the decomposition of such
a morphism given in paragraph~\ref{paragr:pi_1}, it suffices to show the
result when $f$ or $f_0$ is an identity. Suppose first that $f_0$ is the
identity.  Let $P$ (resp.~$P'$) be a path object of $X$
(resp.~of $X'$) and assume that $r : X \to P$ is a cofibration. Let $g :
P \to P'$ be a morphism making the diagram
\[
\xymatrix@C=3pc{
X \ar[d]_f \ar[r]^r & P \ar[d]_g \ar[r]^-{(p_1, p_0)} & X\times X \ar[d]^{f
\times f} \\
X'\ar[r]_{r'} & P' \ar[r]_-{(p'_1, p'_0)} & X'\times X'  \\
}
\]
commute.
Consider the naturality square
\[
\xymatrix{
[(\Dn{0}, (x, x)), (P, (p_1, p_0))]_{\MXX} \ar[r] \ar[d] & [\Dn{0},
\Omega_x X] \ar[d] \\
[(\Dn{0}, (x, x)), (P', (p'_1, p'_0))]_{\MXX} \ar[r] & [\Dn{0},
\Omega_{x'} X'] \pbox{.}\\
}
\]
The right vertical map is induced by $g$ by definition and the left
vertical map is induced by $g$ by Lemma \ref{lemma:comp_lr}. The
square is hence commutative.

Suppose now $f$ is the identity. The vertical maps in the naturality square
\[
\xymatrix{
[(\Dn{0}, (x, x)), (P, (p_1, p_0))]_{\MXX} \ar[r] \ar[d] & [\Dn{0},
\Omega_xX] \ar[d] \\
[(\Dnp{0}, (x, x)), (P, (p_1, p_0))]_{\MXX} \ar[r] & [\Dnp{0},
\Omega_xX] \\
}
\]
are both induced by an inverse of $f_0$ up to homotopy and the square is hence
commutative.
\end{proof}

\begin{tparagr}{A description of the composition of homotopies}
Let $X$ be an object of $\M$. Choose $\Dn{0}$ a cofibrant replacement of the
terminal object of $\M$ and $\Dn{1}$ a cylinder object of $\Dn{0}$.  Denote
by
\[
\xymatrix{
\Dn{0} \amalg \Dn{0} \ar[r]^-{(\Tht{1}, \Ths{1})} & \Dn{1} \ar[r]^-{\Thk{0}} & \Dn{0} 
}
\]
the associated factorization.

Let $x, x', x'' : \Dn{0} \to X$ be three morphisms of $\M$. Every element of
$\pi_1(\Dn{0}, X; x, x')$ can be represented by a homotopy $l : \Dn{1} \to
X$ from $x$ to $x'$. Similarly, an element of~$\pi_1(\Dn{0}, X; x', x'')$
can be represented by a homotopy $l' : \Dn{1} \to X$ from $x'$ to $x''$. Let
$l$ and~$l'$ be such homotopies. We will describe a homotopy $\Dn{1} \to X$
from $x$ to $x''$ representing the composition of $l$ and $l'$.

Let 
\[ 
\Dn{1} \amalg_{\Dn{0}} \Dn{1}
=
(\Dn{1}, \Ths{1}) \amalg_{\Dn{0}} (\Tht{1}, \Dn{1})
\]
be the cylinder considered in paragraph \ref{paragr:pi1_comp}.
Denote by 
\[ \ceps{1}, \ceps{2} : \Dn{1} \to \Dn{1} \amalg_{\Dn{0}} \Dn{1} \]
the two canonical morphisms and
consider the commutative square
\[
\xymatrix@C=4pc{
\Dn{0} \amalg \Dn{0} \ar[r]^-{(\ceps{1}\Tht{1}, \ceps{2}\Ths{1})} \ar[d]_{(\Tht{1},
\Ths{1})} & \Dn{1} \amalg_{\Dn{0}} \Dn{1} \ar[d] \\
\Dn{1} \ar[r] & \ast \pbox{.}
}
\]
The left vertical morphism is a cofibration by definition. Since
$\Dn{1} \amalg_{\Dn{0}} \Dn{1}$ is a cylinder object of a weakly contractible
object, it is also weakly contractible. Moreover, by assumption on the model
category $\M$, every object is fibrant. The square hence admits a lifting,
i.e., there exists a morphism
\[ \Thn{1} : \Dn{1} \to \Dn{1} \amalg_{\Dn{0}} \Dn{1} \]
such that
\[
\Thn{1}\Ths{1} = \ceps{2}\Ths{1}
\quad\text{and}\quad
\Thn{1}\Tht{1} = \ceps{1}\Tht{1}.
\]
The morphism $(l', l)\Thn{1}$ is the announced homotopy from $x$ to $x''$.
\end{tparagr}

\begin{prop}\label{prop:comp_Pi_fib}
With the notation of the above paragraph, the homotopy $(l, l')\Thn{1}$
corresponds under the bijection of Proposition \ref{prop:desc_graph_Pi1},
to the composition of $l$ and $l'$. In other words, the homotopies
\[
(l', l) : \Dn{1} \amalgd{0} \Dn{1} \to X
\quad\text{and}\quad
(l', l)\Thn{1} : \Dn{1} \to X.
\]
are left $2$-homotopic.
\end{prop}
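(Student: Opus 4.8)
The plan is to reduce the statement to Quillen's Lemma~\ref{lemma:hom_quillen}(2): I would fix a path object of $X$ and exhibit a single right homotopy to which \emph{both} left homotopies $(l', l) : \Dn{1} \amalgd{0} \Dn{1} \to X$ and $(l', l)\Thn{1} : \Dn{1} \to X$ correspond; that lemma then forces the two to be left $2$-homotopic, hence to represent the same element of $\pi_1(\Dn{0}, X; x, x'')$. Since the composition of $l$ and $l'$ in $\Pi_1(\Dn{0}, X)$ is by definition (paragraph~\ref{paragr:pi1_comp}) represented by $(l', l)$, and since the bijection of Proposition~\ref{prop:desc_graph_Pi1} sends a left homotopy to its class in $\pi_1$, this is precisely the assertion.

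Concretely, I would fix a path object $X \xrightarrow{r} P \xrightarrow{(p_1, p_0)} X \times X$. Applying Lemma~\ref{lemma:hom_quillen}(1) to the left homotopy $l' : \Dn{1} \to X$ from $x'$ to $x''$ gives a right homotopy $\kappa : \Dn{0} \to P$ from $x'$ to $x''$ together with a correspondence $G' : \Dn{1} \to P$ such that $p_0 G' = l'$, $\;G'\Ths{1} = \kappa$, $\;p_1 G' = x''\Thk{0}$ and $G'\Tht{1} = rx''$. Since $\Dn{0}$ and $\Dn{1}$ are weakly contractible and, every object of $\M$ being fibrant, also fibrant, the end inclusion $\Tht{1} : \Dn{0} \to \Dn{1}$ is a trivial cofibration; as $(p_1, p_0)$ is a fibration, the commutative square
\[
\xymatrix@C=3.5pc{
\Dn{0} \ar[r]^-{\kappa} \ar[d]_{\Tht{1}} & P \ar[d]^{(p_1, p_0)} \\
\Dn{1} \ar[r]_-{(x''\Thk{0},\, l)} & X \times X
}
\]
admits a lifting $H_2 : \Dn{1} \to P$; I set $k = H_2\Ths{1} : \Dn{0} \to P$, which is a right homotopy from $x$ to $x''$. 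The maps $G'$ and $H_2$ both restrict to $\kappa$ along the identification $\ceps{1}\Ths{1} = \ceps{2}\Tht{1}$, so they glue to a morphism $H : \Dn{1} \amalgd{0} \Dn{1} \to P$ with $H\ceps{1} = G'$ and $H\ceps{2} = H_2$. Recalling that the composite cylinder $\Dn{1} \amalgd{0} \Dn{1}$ carries the cofibration $(\ceps{1}\Tht{1}, \ceps{2}\Ths{1})$ and the weak equivalence $(\Thk{0}, \Thk{0})$, a direct check then shows that $H$ is a correspondence between $(l', l)$ and $k$.

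It remains to see that $(l', l)\Thn{1}$ corresponds to the \emph{same} $k$. The delicate point is that $\Thn{1}$ need not be compatible with the weak equivalences down to $\Dn{0}$, so $(l', l)\Thn{1}$ must be regarded as a left homotopy with respect to a modified cylinder structure on $\Dn{1}$: I would put $t = (\Thk{0}, \Thk{0})\Thn{1} : \Dn{1} \to \Dn{0}$ and observe, using $\Thn{1}\Ths{1} = \ceps{2}\Ths{1}$ and $\Thn{1}\Tht{1} = \ceps{1}\Tht{1}$, that $t\Ths{1} = t\Tht{1} = \id{\Dn{0}}$, so that, $\Ths{1}$ being a weak equivalence, $t$ is a weak equivalence and $(\Dn{1}, (\Tht{1}, \Ths{1}), t)$ is again a cylinder object of $\Dn{0}$. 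With this cylinder structure, the same two relations show at once that $H\Thn{1} : \Dn{1} \to P$ is a correspondence between $(l', l)\Thn{1}$ and $k$. Both $(l', l)$ and $(l', l)\Thn{1}$ being left homotopies from $x$ to $x''$ that correspond to $k$, Lemma~\ref{lemma:hom_quillen}(2) --- which permits the two homotopies to use distinct cylinder objects --- shows that they are left $2$-homotopic, which finishes the proof. I expect this last bookkeeping, in particular the choice of the modified cylinder making $H\Thn{1}$ a genuine correspondence, to be the only real obstacle; the remaining verifications are routine diagram chases with the correspondence identities.
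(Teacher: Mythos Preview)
Your argument is correct, but it takes a considerably longer route than the paper's proof. The paper exploits the key structural feature of the situation: both homotopies $(l',l)$ and $(l',l)\Thn{1}$ factor through the single weakly contractible object $\Dn{1}\amalgd{0}\Dn{1}$. Given a $2$-cylinder $D$ of the cylinders $\Dn{1}\amalgd{0}\Dn{1}$ and $\Dn{1}$, one lifts the square
\[
\xymatrix@C=5.5pc{
\Dn{1} \amalg_{\Dn{0}\amalg\Dn{0}} \big(\Dn{1} \amalgd{0} \Dn{1}\big)
  \ar[d]_{(j_1,j_0)}
  \ar[r]^-{(\Thn{1},\ \id{})}
& \Dn{1}\amalgd{0}\Dn{1} \ar[d] \\
D \ar[r] & \ast
}
\]
against the trivial fibration $\Dn{1}\amalgd{0}\Dn{1}\to\ast$ to a map $H:D\to\Dn{1}\amalgd{0}\Dn{1}$; then $(l',l)H$ is already the desired left $2$-homotopy. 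No path object, no right homotopies, no modified cylinder structure are needed. Your approach, by contrast, passes through the general correspondence machinery of Lemma~\ref{lemma:hom_quillen}: you build a right homotopy $k$ and two separate correspondences, glue them, and then must introduce the auxiliary contraction $t=(\Thk{0},\Thk{0})\Thn{1}$ on $\Dn{1}$ precisely because $\Thn{1}$ need not respect $\Thk{0}$. This works, and the observation that the class in $\pi_1$ (equivalently, the image under Proposition~\ref{prop:desc_graph_Pi1}) is insensitive to the choice of weak equivalence $s$ in the cylinder structure closes the loop; but it is a detour around the one-line lifting argument that the weak contractibility of $\Dn{1}\amalgd{0}\Dn{1}$ makes available.
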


\begin{proof}
 Let $D$ be a $2$-cylinder object of the cylinders $\Dn{1} \amalgd{0}
 \Dn{1}$ and $\Dn{1}$. Consider the commutative square
\[
\xymatrix@C=5.5pc{
\Dn{1} \amalgd{0} \big(\Dn{1} \amalgd{0} \Dn{1}\big) \ar[d]_{(j_1,j_0)}
\ar[r]^-{(\Thn{1}, \id{\Dn{1} \amalgd{0} \Dn{1}})} & \Dn{1}
\amalgd{0} \Dn{1} \ar[d] \\
D \ar[r] & \ast \pbox{.}
}
\]
The left vertical morphism is a cofibration by definition and
the object $\Dn{1} \amalg_{\Dn{0}} \Dn{1}$ is fibrant and weakly
contractible.  The square hence admits a lifting $H : D \to \Dn{1}
\amalgd{0} \Dn{1}$ and $(l', l)H$ is the desired left $2$-homotopy
from $(l', l)$ to $(l', l)\Thn{1}$,
\end{proof}

\section{Comparison of Quillen and Grothendieck homotopy groups}

\begin{paragr}
In this section, we fix a formal coherator $C$, a model category $\M$ in
which every object is fibrant, and a fibrant and weakly contractible functor
$F : \G \to \M$. By paragraph~\ref{par:def_Pi_cat_mod}, such a functor
induces a functor
\[
\Pi_{C,K} : \M \to \wgpdC{C},
\]
which depends on a globular functor $K : C \to \M$. From now on,
we will denote this functor by $\Pi_\infty$ and, if $X$ is an object
of~$\M$, we will call $\Pi_\infty(X)$ the fundamental \oo-groupoid of $X$.
Recall that the set of $n$-arrows of this fundamental \oo-groupoid
is the set of morphisms from $\Dn{n}$ to $X$ in $\M$.

We recall some notation from paragraph \ref{par:ex_cof_wc}.  For $n \ge 0$,
we have an object $\Sn{n-1}$ of~$\M$ and a morphism $i_n : \Sn{n-1} \to
\Dn{n}$. For $n = 0$, the morphism $i_0$ is the unique morphism $\varnothing
\to \Dn{0}$ and, for $n \ge 1$, we have
\[ i_n = (\Tht{n}, \Ths{n}) : \Sn{n-1} = \Dn{n-1} \amalg_{\Sn{n-2}} \Dn{n-1} \to \Dn{n}. \]
The hypothesis on $F$ exactly means that $i_n$ is a cofibration and that the
$\Dn{n}$ is weakly contractible.

For $n \ge 0$, we will denote by $j_{n+1} : \Sn{n-1} \to \Dn{n+1}$ the
composition of the canonical morphism $\Sn{n-1} \to \Sn{n} = \Dn{n}
\amalg_{\Sn{n-1}} \Dn{n}$ followed by $i_{n+1}$. Note that we have
\[
 j_{n+1} = (\Tht{n+1}\Tht{n}, \Ths{n+1}\Ths{n}) : \Sn{n-1} = \Dn{n-1}
\amalg_{\Sn{n-2}} \Dn{n-1} \to \Dn{n+1}.
\]

We will also fix a choice of morphisms
\[ \Thk{n} : \Dn{n+1} \to \Dn{n}, \quad n \ge 0, \]
of $C$ such that
\[
\Thk{n}\Ths{n+1} = \id{\Dn{n}}
\quad\text{and}\quad
\Thk{n}\Tht{n+1} = \id{\Dn{n}}.
\]
These morphisms are weak equivalences since the $\Dn{n}$'s are weakly
contractible. Moreover, they induce a choice of units $\Glk{n} : G_n \to
G_{n-1}$ for every \oo-groupoid $G$ of type $C$.
\end{paragr}

\begin{lemma}\label{lemma:cyl}
\ 
\begin{enumerate}
\item The object $\Dn{0}$ is a cofibrant replacement of the terminal object
of $\M$.
\item For every $n \ge 0$, the object $(\Dn{n+1}, j_{n+1})$ is a cylinder
object of $(\Dn{n}, i_n)$ in the category $\Sn{n-1}\bs\M$
for the factorization
\[
\xymatrix{
(\Sn{n}, l_n) \ar[r]^-{i_{n+1}} & (\Dn{n+1}, j_{n+1}) \ar[r]^-{\Thk{n}} &
(\Dn{n}, i_n) \pbox{,}
}
\]
where $l_n : \Sn{n-1} \to \Sn{n} = \Dn{n} \amalg_{\Sn{n-1}} \Dn{n}$ is the
canonical morphism.
\end{enumerate}
\end{lemma}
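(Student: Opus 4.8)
The plan is to deduce both assertions directly from the hypothesis on $F$, namely that each morphism $i_n : \Sn{n-1} \to \Dn{n}$ is a cofibration of $\M$ and that each object $\Dn{n}$ is weakly contractible.

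Assertion $(1)$ is immediate from the definitions: the morphism $i_0 : \Sn{-1} = \varnothing \to \Dn{0}$ being a cofibration says precisely that $\Dn{0}$ is cofibrant, while $\Dn{0}$ being weakly contractible says precisely that $\Dn{0} \to \ast$ is a weak equivalence.

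For assertion $(2)$, I would first spell out the coproduct and the codiagonal of $(\Dn{n}, i_n)$ in the under category $\Sn{n-1}\bs\M$: the coproduct of $(\Dn{n}, i_n)$ with itself is the pushout $(\Sn{n}, l_n) = (\Dn{n} \amalg_{\Sn{n-1}} \Dn{n}, l_n)$, and the codiagonal is $(\id{\Dn{n}}, \id{\Dn{n}}) : \Sn{n} \to \Dn{n}$. It then remains to check three points. First, that $i_{n+1}$ and $\Thk{n}$ are morphisms of $\Sn{n-1}\bs\M$ forming the displayed factorization and composing to the codiagonal: writing $\cepsp{1}, \cepsp{2} : \Dn{n} \to \Sn{n}$ for the canonical morphisms, one has $i_{n+1}\cepsp{1} = \Tht{n+1}$ and $i_{n+1}\cepsp{2} = \Ths{n+1}$, hence $j_{n+1} = i_{n+1} l_n = \Tht{n+1}i_n = \Ths{n+1}i_n$, so $i_{n+1}$ lies under $\Sn{n-1}$; the identities $\Thk{n}\Ths{n+1} = \Thk{n}\Tht{n+1} = \id{\Dn{n}}$ then give both $\Thk{n}j_{n+1} = i_n$ (so $\Thk{n}$ lies under $\Sn{n-1}$ and over $(\Dn{n}, i_n)$) and $\Thk{n}i_{n+1} = (\id{\Dn{n}}, \id{\Dn{n}})$ (so the composite is the codiagonal). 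Second, that $i_{n+1}$ is a cofibration of $\Sn{n-1}\bs\M$, which holds because cofibrations of an under category are detected on underlying morphisms of $\M$ and $i_{n+1}$ is a cofibration of $\M$ by the hypothesis on $F$. Third, that $\Thk{n}$ is a weak equivalence of $\Sn{n-1}\bs\M$, which is again detected on the underlying morphism of $\M$, and $\Thk{n} : \Dn{n+1} \to \Dn{n}$ is a weak equivalence by the two-out-of-three property applied to the composite $\Dn{n+1} \xrightarrow{\Thk{n}} \Dn{n} \to \ast$, both $\Dn{n+1}$ and $\Dn{n}$ being weakly contractible.

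I do not expect any serious obstacle. The only step that requires a moment's care is the identification of the coproduct and the codiagonal in $\Sn{n-1}\bs\M$ together with the verification that $i_{n+1}$ and $\Thk{n}$ are compatible with the structural morphisms from $\Sn{n-1}$, i.e., the bookkeeping carried out in the first of the three points above; the argument is uniform in $n \ge 0$, with $\Sn{n-1}\bs\M = \M$ when $n = 0$.
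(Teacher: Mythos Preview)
Your proof is correct and follows essentially the same approach as the paper's. The paper's argument is slightly terser---it dispatches~(1) with ``true by definition'' and for~(2) simply writes out the three equalities $i_{n+1}l_n = j_{n+1}$, $\Thk{n}j_{n+1} = i_n$, $\Thk{n}i_{n+1} = (\id{\Dn{n}}, \id{\Dn{n}})$ and then notes that $i_{n+1}$ is a cofibration and $\Thk{n}$ a weak equivalence---but the substance is identical to what you wrote.
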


\begin{proof}
The first assertion is true by definition. Let us prove the second one.
The equalities \[
i_{n+1}l_n = j_{n+1}, \quad
\Thk{n}j_{n+1} =
\Thk{n}(\Tht{n+1}\Tht{n}, \Ths{n+1}\Ths{n}) = (\Tht{n}, \Ths{n}) = i_n,
\]
and
\[
\Thk{n}i_{n+1} = \Thk{n}(\Tht{n+1}, \Ths{n+1}) = (\id{\Dn{n}}, \id{\Dn{n}}),
\]
show that we indeed have a factorization of the codiagonal of $(\Dn{n}, i_n)$ in
$\Sn{n-1}\bs\M$. Moreover, $i_{n+1}$ is a cofibration and we have already
noticed that $\Thk{n}$ is a weak equivalence.
\end{proof}

\begin{prop}\label{prop:hom_hom}
Let $X$ be an object of $\M$ and let $G$ be its fundamental \oo-groupoid.
\begin{myenumeratep}
\item Two objects $x, y : \Dn{0} \to X$ of $G$ are homotopic as objects of
$G$ if and only if $x, y : \Dn{0} \to X$ are equal as morphisms of
$\Ho(\M)$. In particular, we have a canonical bijection
\[
\pi_0(\Pi_\infty(X)) \to \pi_0(X),
\]
natural in $X$.
\item Let $n \ge 1$ and let $x, y : \Dn{n-1} \to X$ be two parallel
$(n-1)$-arrows of $G$. Two $n$-arrows $u, v : \Dn{n} \to X$ from $x$ to
$y$ are homotopic as $n$-arrows of $G$ if and only if $u, v : (\Dn{n}, i_n) \to
(X, (y, x))$ are equal as morphisms of $\Ho(\Sn{n-1}\bs\M)$. In particular,
we have a canonical bijection
\[
\pi_n(\Pi_\infty(X), x, y) \to [(\Dn{n}, i_n), (X, (y, x))]_{\Sn{n-1}\bs\M},
\]
natural in $X$.
\end{myenumeratep}
\end{prop}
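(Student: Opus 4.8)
The plan is to reduce both assertions to Quillen's homotopy theory, using the cylinder objects provided by Lemma~\ref{lemma:cyl} and working in the slice categories $\Sn{n-1}\bs\M$; once that lemma is available, the argument is essentially a matter of matching the globular notion of homotopy with the model-categorical one.

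For the first assertion, the case $n=0$ of Lemma~\ref{lemma:cyl}(2) says exactly (since $\Sn{-1}=\varnothing$, so $\Sn{-1}\bs\M=\M$) that
\[
\Dn{0}\amalg\Dn{0} \xrightarrow{(\Tht{1},\Ths{1})} \Dn{1} \xrightarrow{\Thk{0}} \Dn{0}
\]
is a cylinder-object factorization in $\M$. By definition, a homotopy from $x$ to $y$ in $\Pi_\infty(X)$ is a morphism $h:\Dn{1}\to X$ with $h\Ths{1}=x$ and $h\Tht{1}=y$, which is precisely a left homotopy from $x$ to $y$ relative to this cylinder. Since $\Dn{0}$ is cofibrant (Lemma~\ref{lemma:cyl}(1)) and $X$ is fibrant, such a left homotopy exists if and only if $x$ and $y$ coincide in $\Ho(\M)$. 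Hence $\tildeh[0]$ identifies exactly the $0$-arrows that are equal in $\Ho(\M)$, and
\[
\pi_0(\Pi_\infty(X)) = \Hom_\M(\Dn{0},X)/{\tildeh[0]} = [\Dn{0},X] \cong [\ast,X] = \pi_0(X),
\]
the middle isomorphism being Lemma~\ref{lemma:cyl}(1). Naturality in $X$ is immediate, since $\Pi_\infty(f)$ acts on $0$-arrows by post-composition with $f$.

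For the second assertion, I would first record the translation into $\Sn{n-1}\bs\M$: using the coglobular relations one sees that $(y,x):\Sn{n-1}\to X$ is well-defined (this is where parallelism of $x,y$ enters) and that a morphism $u:\Dn{n}\to X$ is an $n$-arrow of $\Pi_\infty(X)$ from $x$ to $y$ precisely when $u i_n=(y,x)$, i.e.\ when $u$ is a morphism $(\Dn{n},i_n)\to(X,(y,x))$ of $\Sn{n-1}\bs\M$. Then Lemma~\ref{lemma:cyl}(2) exhibits $(\Dn{n+1},j_{n+1})$ as a cylinder object of $(\Dn{n},i_n)$ in $\Sn{n-1}\bs\M$, the relevant coproduct $(\Dn{n},i_n)\amalg(\Dn{n},i_n)$ being $(\Sn{n},l_n)$; the coglobular relations give $\Ths{n+1}i_n=j_{n+1}=\Tht{n+1}i_n$, so that the two endpoint inclusions $\Ths{n+1},\Tht{n+1}$ are genuinely morphisms of $\Sn{n-1}\bs\M$. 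It follows that an $(n+1)$-arrow $h:\Dn{n+1}\to X$ of $\Pi_\infty(X)$ from $u$ to $v$ automatically satisfies $h j_{n+1}=(y,x)$ and is precisely a left homotopy from $u$ to $v$ in $\Sn{n-1}\bs\M$ relative to this cylinder, and conversely. Now $i_n$ is a cofibration of $\M$, so $(\Dn{n},i_n)$ is cofibrant in $\Sn{n-1}\bs\M$, and $X$ is fibrant, so $(X,(y,x))$ is fibrant there; hence such an $h$ exists if and only if $u=v$ in $\Ho(\Sn{n-1}\bs\M)$, and
\[
\pi_n(\Pi_\infty(X),x,y) = \Hom_{\Sn{n-1}\bs\M}\big((\Dn{n},i_n),(X,(y,x))\big)/{\tildeh[n]} = [(\Dn{n},i_n),(X,(y,x))]_{\Sn{n-1}\bs\M}.
\]
Naturality in $X$ follows, as before, because $\Pi_\infty(f)$ acts by post-composition with $f$, compatibly with the induced morphism $(X,(y,x))\to(Y,(fy,fx))$ of slice categories.

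The work here is bookkeeping rather than conceptual, so there is no deep obstacle: the main thing to be careful about is keeping the source/target and $i_0/i_1$ conventions straight, so that a homotopy in $\Pi_\infty(X)$ is \emph{literally} a left homotopy in the appropriate slice (the paper's conventions and the coglobular relations are precisely engineered for this), together with invoking the standard fact that, for a cofibrant source and a fibrant target, left homotopy relative to a fixed Quillen cylinder object coincides with equality in the homotopy category. Everything else is formal once Lemma~\ref{lemma:cyl} is in hand.
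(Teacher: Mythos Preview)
Your proof is correct and follows essentially the same approach as the paper: both use Lemma~\ref{lemma:cyl} to identify a globular homotopy with a left homotopy relative to the cylinder $(\Dn{n+1},j_{n+1})$ in $\Sn{n-1}\bs\M$, and then invoke cofibrancy of $(\Dn{n},i_n)$ and fibrancy of $(X,(y,x))$ to conclude. You supply somewhat more bookkeeping detail (the coglobular checks and the naturality remarks) than the paper does, but the argument is the same.
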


\begin{proof}
\ 
\begin{myenumeratep}
\item 
By definition, two objects $x, y : \Dn{0} \to X$ of $G$ are homotopic
if there exists a $1$\nbd-arrow $h : x \to y$ in $G$, i.e.,
a morphism $h : \Dn{1} \to X$ of $\M$ such that $h\Ths{1} = x$ and~$h\Tht{1}
= y$. By the previous lemma, $\Dn{1}$ is a cylinder object of
$\Dn{0}$ and so $h$ is a left homotopy from $x$ to $y$ in $\M$. The result then
follows from the fact that $\Dn{0}$ is cofibrant and $X$ is fibrant.
\item 
By definition, the $n$-arrows $u, v : \Dn{n} \to X$ are homotopic if there
exists an \hbox{\text{$(n+1)$-arrow}} $h : u \to v$ in $G$, i.e., a
morphism $h : \Dn{n+1} \to X$ of $\M$ such that $h\Ths{n+1} = u$ and
$h\Tht{n+1} = v$. Such an $h$ induces a morphism $h : (\Dn{n+1}, j_{n+1})
\to (X, (y, x))$. But by the previous lemma, $(\Dn{n+1}, j_{n+1})$ is a
cylinder object of $(\Dn{n}, i_n)$ in $\Sn{n-1}\bs\M$ and so $h$ is a left
homotopy between $u, v : (\Dn{n}, i_n) \to (X, (y, x))$ in $\Sn{n-1}\bs\M$.
The result then follows from the fact that $i_n$ is a cofibration and $X$ is
fibrant.
\end{myenumeratep}
\end{proof}

\begin{prop}\label{prop:desc_var_pi1}
Let $X$ be an object of $\M$.
There is a canonical isomorphism of groupoids
\[ \varpi_1(\Pi_\infty(X)) \cong \Pi_1(\Dn{0}, X), \]
natural in $X$.
\end{prop}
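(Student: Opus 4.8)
The plan is to identify the two groupoids levelwise. Both have the \emph{same} set of objects: the object set of $\PI_1(\Pi_\infty(X))$ is $\Pi_\infty(X)_0 = \Hom_\M(\Dn{0}, X)$, which is also the object set of $\Pi_1(\Dn{0}, X)$ by definition. Now fix $x, y : \Dn{0} \to X$. On the one hand, Proposition \ref{prop:hom_hom}(ii) applied with $n = 1$ (legitimate since $i_1 = (\Tht{1}, \Ths{1}) : \Sn{0} = \Dn{0} \amalg \Dn{0} \to \Dn{1}$ is a cofibration and $X$ is fibrant) gives a bijection, natural in $X$,
\[ \Hom_{\PI_1(\Pi_\infty(X))}(x, y) = \pi_1(\Pi_\infty(X), x, y) \;\cong\; [(\Dn{1}, (\Tht{1}, \Ths{1})),\, (X, (y, x))]_{\Sn{0}\bs\M}. \]
On the other hand, by the $n = 0$ case of Lemma \ref{lemma:cyl} the object $\Dn{0}$ is cofibrant and $\Dn{1}$ is a cylinder object of $\Dn{0}$ with structure maps $(\Tht{1}, \Ths{1})$ and $\Thk{0}$; hence Proposition \ref{prop:desc_graph_Pi1} gives a bijection, natural in $X$,
\[ \Hom_{\Pi_1(\Dn{0}, X)}(x, y) = \pi_1(\Dn{0}, X; x, y) \;\cong\; [(\Dn{1}, (\Tht{1}, \Ths{1})),\, (X, (y, x))]_{\Sn{0}\bs\M}. \]
These two descriptions have the same target, so composing one with the inverse of the other yields a bijection $\Hom_{\PI_1(\Pi_\infty(X))}(x, y) \cong \Hom_{\Pi_1(\Dn{0}, X)}(x, y)$, natural in $X$. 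It remains to see that this, together with the identification on objects, is a functor, i.e. that it respects units and composition.

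For units: the identity of $x$ in $\PI_1(\Pi_\infty(X))$ is the class of $\Glk{0}(x)$, and since the unit $\Glk{0}$ is induced by the morphism $\Thk{0}$ of $C$, evaluating $\Pi_\infty(X) = \Hom_\M(K(-), X)$ gives $\Glk{0}(x) = x \circ K(\Thk{0})$. In $\Pi_1(\Dn{0}, X)$ the identity of $x$ is represented by the degenerate homotopy $x \circ s$ for the weak-equivalence leg $s$ of any cylinder of $\Dn{0}$; taking the cylinder $\Dn{1}$ with $s = K(\Thk{0})$, these representatives agree. (Here $K(\Thk{0})$ is indeed a weak equivalence: $K(\Ths{1})$ is a cofibration between weakly contractible objects, hence a trivial cofibration, and $K(\Thk{0}) K(\Ths{1}) = \id{\Dn{0}}$, so we conclude by two-out-of-three.) For composition: let $u : x \to x'$ and $v : x' \to x''$ be $1$-arrows of $\Pi_\infty(X)$, assembled into $(v, u) : \Dn{1} \amalgd{0} \Dn{1} \to X$ (with $v$ on the $\ceps{1}$-leg and $u$ on the $\ceps{2}$-leg). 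The composite $v \comp^1_0 u$ is induced by the morphism $\Thn[0]{1} : \Dn{1} \to \Dn{1} \amalgd{0} \Dn{1}$ of the chosen pregroupoidal structure on $C$, so $v \comp^1_0 u = (v, u) \circ K(\Thn[0]{1})$; since $K$ respects globular sums, $K(\Thn[0]{1})$ is a lifting in $\M$ of the admissible pair $(\ceps{2}\Ths{1}, \ceps{1}\Tht{1})$. By Proposition \ref{prop:comp_Pi_fib} (whose proof only uses that one has such a lifting) together with the description of the composite cylinder in paragraph \ref{paragr:pi1_comp}, the homotopy $(v, u) \circ K(\Thn[0]{1})$ represents the composite of $u$ and $v$ in $\Pi_1(\Dn{0}, X)$. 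Hence the two composition laws agree under the identification.

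Naturality in $X$ is then formal: a morphism $\phi : X \to X'$ of $\M$ induces $\PI_1(\Pi_\infty(\phi))$ and $\Pi_1(\Dn{0}, \phi)$, each acting by $x \mapsto \phi x$ on objects and, under the two bijections above, by post-composition with $\phi$ in $\Ho(\Sn{0}\bs\M)$ on hom-sets (by the naturality clauses of Propositions \ref{prop:hom_hom} and \ref{prop:desc_graph_Pi1} and by paragraph \ref{paragr:def_Pi_1}); the resulting squares commute, so the isomorphism is natural. The main obstacle is purely bookkeeping: one must check that the two ``canonical'' hom-set identifications really are the \emph{same} identification once both are unwound through the cylinder $\Dn{1}$, and keep the source/target conventions straight so that the orientations of the composites in $\PI_1(\Pi_\infty(X))$ and in $\Pi_1(\Dn{0}, X)$ match. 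I expect precisely this orientation check, together with confirming that Proposition \ref{prop:comp_Pi_fib} applies verbatim to the lifting $K(\Thn[0]{1})$ coming from the pregroupoidal structure on $C$, to be where a little care is needed.
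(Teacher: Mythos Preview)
Your proof is correct and follows essentially the same route as the paper: both identify the hom-sets via Proposition~\ref{prop:hom_hom} and Proposition~\ref{prop:desc_graph_Pi1}, and both deduce compatibility with composition from Proposition~\ref{prop:comp_Pi_fib}. You are slightly more explicit than the paper in tracing the composition through $K(\Thn[0]{1})$ and in checking units, whereas the paper simply observes that $\PI_1(\Pi_\infty(X))$ is induced by \emph{any} valid $\Thn{1}$ (implicitly invoking Proposition~\ref{prop:PI_indep}) and then applies Proposition~\ref{prop:comp_Pi_fib} directly; these amount to the same thing.
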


\begin{proof}
Let $x, y : \Dn{0} \to X$ be two objects of $\varpi_1(\Pi_\infty(X))$. By 
the previous proposition, we have
\[
\Hom_{\varpi_1(\Pi_\infty(X))}(x, y) \cong [(\Dn{1}, i_1), (X, (y, x))]_{\Sn{0}\bs\M}.
\]
Hence by Proposition \ref{prop:desc_graph_Pi1}, the underlying graphs
of $\varpi_1(\Pi_\infty(X))$ and of $\Pi_1(\Dn{0}, X)$ are canonically
isomorphic. This isomorphism is obviously natural in $X$.  Moreover, by
definition, the composition of $\varpi_1(\Pi_\infty(X))$ is induced by any
morphism
\[ \Thn{1} : \Dn{1} \to \Dn{1} \amalg_{\Dn{0}} \Dn{1} \]
such that
\[
\Thn{1}\Ths{1} = \ceps{2}\Ths{1}
\quad\text{and}\quad
\Thn{1}\Tht{1} = \ceps{1}\Tht{1}.
\]
The result thus follows from Proposition \ref{prop:comp_Pi_fib}.
\end{proof}

\begin{prop}\label{prop:pi_n_pi_1}
Let $X$ be an object of $\M$. Let $n \ge 2$ and
let $u : x \to y$ be an $(n-1)$-arrow of the fundamental \oo-groupoid
of $X$. We have a canonical isomorphism of groups
\[ \pi_n(\Pi_\infty(X), u) \cong \pi_1((X, (y, x)), u)_{\Sn{n-2}\backslash
\M}, \]
natural in $X$.
\end{prop}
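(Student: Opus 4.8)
The plan is to identify both sides of the asserted isomorphism with a single $\Hom$-set in a homotopy category. Write $G = \Pi_\infty(X)$ and $\M'' = \Sn{n-2}\bs\M$. Since every object of $\M$ is fibrant, so is every object of $\M''$ (an object of $\M''$ is fibrant as soon as its underlying object of $\M$ is), so $\M''$ is a model category to which Propositions~\ref{prop:desc_graph_Pi1} and~\ref{prop:comp_Pi_fib} apply. By the hypothesis on $F$, the morphism $i_{n-1}$ is a cofibration and $\Dn{n-1}$ is weakly contractible, so $(\Dn{n-1}, i_{n-1})$ is a cofibrant and weakly contractible object of $\M''$; and since $u : x \to y$ is an $(n-1)$-arrow of $G$, we have $u\,i_{n-1} = (y, x)$, so $u$ is a base point of the object $(X, (y, x))$ of $\M''$ and, by definition,
\[
\pi_1\big((X, (y, x)), u\big)_{\Sn{n-2}\bs\M} = \pi_1\big((\Dn{n-1}, i_{n-1}),\, (X, (y, x));\, u,\, u\big)
\]
computed in $\M''$.

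First I would compare the underlying sets. On one side, Proposition~\ref{prop:hom_hom}(ii), applied with the two parallel $(n-1)$-arrows both taken to be $u$, gives a bijection
\[
\pi_n(G, u) = \pi_n(G, u, u) \cong \big[(\Dn{n}, i_n),\, (X, (u, u))\big]_{\Sn{n-1}\bs\M},
\]
natural in $X$. On the other side, by Lemma~\ref{lemma:cyl}(2) (with $n-1$ in place of $n$), $(\Dn{n}, j_n)$ is a cylinder object of $(\Dn{n-1}, i_{n-1})$ in $\M''$, the two structural inclusions being $\Ths{n}$ and $\Tht{n}$; applying Proposition~\ref{prop:desc_graph_Pi1} in $\M''$, with $A = (\Dn{n-1}, i_{n-1})$, $B = (X, (y, x))$, $f = g = u$, and this cylinder object, gives a bijection
\[
\pi_1\big((X, (y, x)), u\big)_{\Sn{n-2}\bs\M} \cong \big[(\Dn{n}, i_n),\, (X, (u, u))\big]_{(A \amalg A)\bs\M''},
\]
where $A \amalg A$ is the coproduct computed in $\M''$, namely the pushout $(\Sn{n-1}, l_{n-1})$. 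Since $(\Sn{n-1}, l_{n-1})\bs(\Sn{n-2}\bs\M)$ is canonically isomorphic to $\Sn{n-1}\bs\M$, compatibly with cofibrations and weak equivalences and hence with homotopy categories, the right-hand side is canonically $\big[(\Dn{n}, i_n), (X, (u, u))\big]_{\Sn{n-1}\bs\M}$. Composing the two bijections produces the desired bijection $\pi_n(\Pi_\infty(X), u) \cong \pi_1((X, (y, x)), u)_{\Sn{n-2}\bs\M}$, natural in $X$.

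Finally I would check that this bijection is a morphism of groups. The group structure on $\pi_n(G, u) = \Hom_{\PI_n(G)}(u, u)$ is induced by $\comp^n_{n-1}$, which, by paragraph~\ref{paragr:first_ex} and the description of $\Pi_{C, K}$, acts on $n$-arrows $h, h' : \Dn{n} \to X$ of $G$ by $h' \comp^n_{n-1} h = (h', h)\,K(\Thn{n})$, where $\Thn{n} : \Dn{n} \to \Dn{n}\amalgd{n-1}\Dn{n}$ is the chosen morphism of $C$ and $(h', h) : \Dn{n}\amalgd{n-1}\Dn{n} \to X$ is the morphism induced by $h$ and $h'$; by paragraph~\ref{paragr:pregroupoid}, $\Thn{n}$ satisfies $\Thn{n}\Ths{n} = \ceps{2}\Ths{n}$ and $\Thn{n}\Tht{n} = \ceps{1}\Tht{n}$. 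Reading everything in $\M''$ --- with $(\Dn{n-1}, i_{n-1})$ in the role of ``$\Dn{0}$'', the cylinder $(\Dn{n}, j_n)$ in that of ``$\Dn{1}$'', $\Thk{n-1}$ in that of ``$\Thk{0}$'', and $K(\Thn{n})$ in that of the morphism ``$\Thn{1}$'' of the paragraph preceding Proposition~\ref{prop:comp_Pi_fib} --- Lemma~\ref{lemma:cyl}(2) together with the relations just recalled shows that these data fulfil the hypotheses needed there, and Proposition~\ref{prop:comp_Pi_fib} (applied in $\M''$) shows that composition by means of $K(\Thn{n})$ corresponds, under the bijection of Proposition~\ref{prop:desc_graph_Pi1}, to Quillen's composition of homotopies. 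Transporting this statement along the identifications of the previous paragraph yields the group isomorphism. I expect this last step to be the main obstacle: nothing genuinely new is required, but one must transport the ``composition of homotopies'' discussion of Section~7 into the slice model category $\Sn{n-2}\bs\M$ with some care and recognise $\Thn{n}$ as a legitimate choice of the morphism ``$\Thn{1}$'' there; this is where all the bookkeeping lives. Naturality in $X$ throughout is clear, each bijection being induced by an explicit, manifestly functorial construction.
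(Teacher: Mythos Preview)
Your proof is correct and follows essentially the same approach as the paper: both use Proposition~\ref{prop:hom_hom} to pass to $[(\Dn{n}, i_n), (X, (u, u))]_{\Sn{n-1}\bs\M}$, then Lemma~\ref{lemma:cyl} and Proposition~\ref{prop:desc_graph_Pi1} in $\Sn{n-2}\bs\M$ to identify this with the $\pi_1$ on the right, and finally Proposition~\ref{prop:comp_Pi_fib} in $\Sn{n-2}\bs\M$ for the group structure. You are simply more explicit than the paper about the identification $(\Sn{n-1}, l_{n-1})\bs(\Sn{n-2}\bs\M) \cong \Sn{n-1}\bs\M$ and about why $K(\Thn{n})$ plays the role of the morphism ``$\Thn{1}$'' needed for Proposition~\ref{prop:comp_Pi_fib}.
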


\begin{proof}
We have the following series of natural bijections of sets:
\begin{align*}
\pi_n(\Pi_\infty(X), u) 
& \cong [(\Dn{n}, i_n), (X, (u, u))]_{\Sn{n-1}\bs\M}
&& \qquad\text{(by Proposition \ref{prop:hom_hom})} \\
& \cong \pi_1((\Dn{n-1}, i_{n-1}), (X, (y, x)); u, u)_{\Sn{n-2}\bs\M}
&& \qquad\text{(see below)}\\
& = \pi_1((X, (y, x)), u)_{\Sn{n-2}\bs\M}. \\
\end{align*}
Let us justify the second bijection. By Lemma~\ref{lemma:cyl}, the object $(\Dn{n},
j_n)$ is a cylinder object of~$(\Dn{n-1}, i_{n-1})$ in $\Sn{n-2}\bs\M$. This
bijection thus comes from Proposition \ref{prop:desc_graph_Pi1} applied to
the model category $\Sn{n-2}\bs\M$. 

Furthermore, the composition of $\pi_n(\Pi_\infty(X), u)$ is induced by any morphism
\[ \Thn{n} : \Dn{n} \to \Dn{n} \amalgd{n-1} \Dn{n} \]
such that
\[
\Thn{n}\Ths{n} = \ceps{2}\Ths{n}
\quad\text{and}\quad
\Thn{n}\Tht{n} = \ceps{1}\Tht{n}.
\]
Denote by $k_n : \Sn{n-2} \to \Dn{n} \amalgd{n-1} \Dn{n}$ the composition of
$i_{n-1} : \Sn{n-2} \to \Dn{n-1}$ followed by the canonical morphism
$\Dn{n-1} \to \Dn{n} \amalgd{n-1} \Dn{n}$.
We have
\[
 (\Dn{n} \amalgd{n-1} \Dn{n}, k_n) = (\Dn{n}, j_n) \amalg_{(\Dn{n-1},
 i_{n-1})} (\Dn{n}, j_n),
\]
where the amalgamated sum is taken in $\Sn{n-2}\bs\M$. Moreover, a morphism
$\Thn{n}$ as above induces a morphism
$\Thn{n} : (\Dn{n}, j_n) \to (\Dn{n} \amalgd{n-1} \Dn{n}, k_n)$.
It thus follows from Proposition~\ref{prop:comp_Pi_fib} that the bijection
we have defined is a morphism of groups.
\end{proof}

\begin{prop}
Let $X$ be an object of $\M$ and let $x$ be an object of the
fundamental \oo-groupoid of $X$. For every $n \ge 2$,
there is a canonical isomorphism of groups
\[
 \pi_n(\Pi_\infty(X), x) \cong \pi_{n-1}(\Pi_\infty(\Omega_x X), c_x),
\]
natural in $X$.
\end{prop}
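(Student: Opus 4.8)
The plan is to transport both sides into Quillen's framework of Sections~6 and~7 by means of the comparison results \ref{prop:hom_hom}, \ref{prop:pi_n_pi_1} and \ref{prop:pi_1_Omega}, and then to check that they land on the same object.

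For the left-hand side I would first unfold the definition $\pi_n(\Pi_\infty(X),x)=\pi_n(\Pi_\infty(X),\Glk[n-1]{0}(x))$, where $\Glk[n-1]{0}(x)$ is the iterated unit, an $(n-1)$-arrow whose globular source and target both equal the $(n-2)$-arrow $\bar x:=\Glk[n-2]{0}(x)$. Proposition~\ref{prop:pi_n_pi_1} then gives a natural isomorphism of groups
\[
\pi_n(\Pi_\infty(X),x)\;\cong\;\pi_1\big((X,(\bar x,\bar x)),\Glk[n-1]{0}(x)\big)_{\Sn{n-2}\backslash\M},
\]
the fundamental group, in the sense of paragraph~\ref{paragr:pi_1}, of the object $(X,(\bar x,\bar x))$ of the model category $\Sn{n-2}\backslash\M$ based at $\Glk[n-1]{0}(x):(\Dn{n-1},i_{n-1})\to(X,(\bar x,\bar x))$; here $(\Dn{n-1},i_{n-1})$ is a cofibrant weakly contractible object of $\Sn{n-2}\backslash\M$ (as $i_{n-1}$ is a cofibration and $\Dn{n-1}$ is weakly contractible, cf.\ Lemma~\ref{lemma:cyl}), so it plays the role of ``$\Dn{0}$'' there. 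Applying Proposition~\ref{prop:pi_1_Omega} inside the model category $\Sn{n-2}\backslash\M$ rewrites this as $\pi_0$ of the corresponding loop object, i.e.\ $\pi_n(\Pi_\infty(X),x)\cong\pi_0\big(\Omega_u(X,(\bar x,\bar x))\big)_{\Sn{n-2}\backslash\M}$ with $u=\Glk[n-1]{0}(x)$.

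The crucial step is then to identify this loop object with $(\Omega_x X,(\overline{c_x},\overline{c_x}))$, where $\overline{c_x}:=\Glk[n-2]{0}(c_x)$. Since the loop object is a pullback along the fibration $(p_1,p_0):P\to X\times X$ and the forgetful functor $\Sn{n-2}\backslash\M\to\M$ preserves finite limits, the underlying object of $\M$ of $\Omega_u(X,(\bar x,\bar x))$ is exactly the loop object $\Omega_u X$ of $X\in\M$ based at $u:\Dn{n-1}\to X$; moreover $u$ factors as $x$ composed with a weak equivalence $\Dn{n-1}\to\Dn{0}$ (the composite of the morphisms induced by the $\Thk{i}$'s), so $(X,u)$ and $(X,x)$ are isomorphic as based objects and hence $\Omega_u X\cong\Omega_x X$ canonically, by the homotopy invariance of the loop construction discussed in Section~7. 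A direct chase of the structure maps out of $\Sn{n-2}$ along this identification, using the defining relations of the $\Thk{i}$'s, shows that the induced structure map $\Sn{n-2}\to\Omega_x X$ is precisely $(\overline{c_x},\overline{c_x})$. Therefore the left-hand side is naturally isomorphic to $\pi_0\big((\Omega_x X,(\overline{c_x},\overline{c_x}))\big)_{\Sn{n-2}\backslash\M}=[(\Dn{n-1},i_{n-1}),(\Omega_x X,(\overline{c_x},\overline{c_x}))]_{\Sn{n-2}\backslash\M}$.

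For the right-hand side I would apply the second assertion of Proposition~\ref{prop:hom_hom} to the object $\Omega_x X$ of $\M$ and to the $(n-2)$-arrow $\overline{c_x}=\Glk[n-2]{0}(c_x)$ of $\Pi_\infty(\Omega_x X)$ (parallel to itself), together with the definition $\pi_{n-1}(\Pi_\infty(\Omega_x X),c_x)=\pi_{n-1}(\Pi_\infty(\Omega_x X),\overline{c_x},\overline{c_x})$. This yields at once a natural bijection
\[
\pi_{n-1}(\Pi_\infty(\Omega_x X),c_x)\;\cong\;[(\Dn{n-1},i_{n-1}),(\Omega_x X,(\overline{c_x},\overline{c_x}))]_{\Sn{n-2}\backslash\M},
\]
which is the very object reached above; composing the two chains gives the desired isomorphism, naturality in $X$ being inherited from that of all the propositions invoked. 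The two points I expect to require the most care are: first, the identification of $\Omega_u(X,(\bar x,\bar x))$ computed in $\Sn{n-2}\backslash\M$ with $(\Omega_x X,(\overline{c_x},\overline{c_x}))$, which mixes the underlying-object description of a pullback in a slice category with the homotopy invariance of $\Omega$ under $\Dn{n-1}\to\Dn{0}$ and forces one to reconcile every structure map; and second, the verification that the resulting bijection is an isomorphism of \emph{groups}, the law on $\pi_n(\Pi_\infty(X),x)$ coming from $\comp^n_{n-1}$ and that on $\pi_{n-1}(\Pi_\infty(\Omega_x X),c_x)$ from $\comp^{n-1}_{n-2}$, the match being obtained through the concatenation-of-homotopies description of composition of Proposition~\ref{prop:comp_Pi_fib} applied in the relevant slice. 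To keep the group-theoretic bookkeeping manageable it is probably cleanest to reduce the right-hand side symmetrically, through Proposition~\ref{prop:pi_n_pi_1} (for $n\ge 3$) or Proposition~\ref{prop:desc_var_pi1} (for $n=2$) and then Proposition~\ref{prop:pi_1_Omega}, so that both sides appear as Quillen fundamental groups before being compared.
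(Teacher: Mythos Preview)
Your proposal is correct and follows essentially the same chain of isomorphisms as the paper: unfold $\pi_n$ via the iterated unit, apply Proposition~\ref{prop:pi_n_pi_1} to land in $\pi_1$ of the slice $\Sn{n-2}\backslash\M$, apply Proposition~\ref{prop:pi_1_Omega} there, identify the resulting loop object with $(\Omega_x X,(\Glk[n-2]{0}(c_x),\Glk[n-2]{0}(c_x)))$ using the weak equivalence $\Thk[n-1]{0}:\Dn{n-1}\to\Dn{0}$, and finish with Proposition~\ref{prop:hom_hom}. The paper makes the loop-object identification via an explicit commutative diagram inducing a weak equivalence $\Omega_{\Glk[n-1]{0}(x)}X\to\Omega_xX$ in $\Sn{n-2}\backslash\M$, and (like you) is not fully explicit about the group-structure compatibility, which is indeed carried by Proposition~\ref{prop:comp_Pi_fib} as you anticipate.
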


\begin{proof}
We have the following series of natural isomorphisms: 
\begin{align*}
  \pi_n(\Pi_\infty(X), x) 
  & = \pi_n(\Pi_\infty(X), \Glk[n-1]{0}(x)) \\
  & \cong \pi_1((X, (\Glt{n-1}\Glk[n-1]{0}(x),\Gls{n-1}\Glk[n-1]{0}(x))),
  \Glk[n-1]{0}(x))_{\Sn{n-2}\backslash \M} \\
  & \qquad\text{(by the previous proposition)} \\
  & = \pi_1((X, (\Glk[n-2]{0}(x),\Glk[n-2]{0}(x))),
  \Glk[n-1]{0}(x))_{\Sn{n-2}\backslash \M} \\
  & \cong [(\Dn{n-1}, i_{n-1}), \Omega_{\Glk[n-1]{0}(x)} (X, (\Glk[n-2]{0}(x),
  \Glk[n-2]{0}(x)))]_{\Sn{n-2}\backslash \M} \\
  & \qquad\text{(by Proposition \ref{prop:pi_1_Omega})} \\
  & \cong [(\Dn{n-1}, i_{n-1}), (\Omega_{\Glk[n-1]{0}(x)} X,
  (c_{\Glk[n-2]{0}(x)}, c_{\Glk[n-2]{0}(x)}))]_{\Sn{n-2}\backslash \M} \\
  & \cong [(\Dn{n-1}, i_{n-1}), (\Omega_x X,
  (c_{\Glk[n-2]{0}(x)}, c_{\Glk[n-2]{0}(x)}))]_{\Sn{n-2}\backslash \M} \\
  & \qquad\text{(see below)} \\
  & \cong \pi_{n-1}(\Pi_\infty(\Omega_x X), c_{\Glk[n-2]{0}(x)}) \\
  & \qquad\text{(by Proposition \ref{prop:hom_hom})} \\
  & = \pi_{n-1}(\Pi_\infty(\Omega_x X), \Glk[n-2]{0}(c_x)) \\
  & = \pi_{n-1}(\Pi_\infty(\Omega_x X), c_x).
\end{align*}
To end the proof, it suffices to show that we have a canonical weak
equivalence
\[
(\Omega_{\Glk[n-1]{0}(x)} X, (c_{\Glk[n-2]{0}(x)}, c_{\Glk[n-2]{0}(x)}))
\to
(\Omega_x X, (c_{\Glk[n-2]{0}(x)}, c_{\Glk[n-2]{0}(x)}))
\]
in $\Sn{n-2}\backslash \M$. Consider the commutative diagram
\[
\xymatrix@C=7pc{
\Dn{n-1} \ar[d]_-{\Thk[n-1]{0}} \ar[r]^-{(\Glk[n-1]{0}(x), \Glk[n-1]{0}(x))} &
X \times X \ar[d]_{\id{X\times X}} & \ar[l]_-{(p_1, p_0)} P
\ar[d]^{\id{P}}\\
\Dn{0} \ar[r]_-{(x, x)} & X \times X & \ar[l]^-{(p_1, p_0)} P \pbox{,}
}
\]
where $P$ is a path object of $X$ in $\M$. This diagram induces a morphism
\[ \Omega_{\Glk[n-1]{0}(x)} X \to \Omega_x X \] 
of $\M$. Since $\Thk[n-1]{0}$ is a weak equivalence and the pullbacks
defining loop objects are homotopy pullbacks, this morphism is a weak
equivalence. Moreover, it induces a morphism
\[
(\Omega_{\Glk[n-1]{0}(x)} X, (c_{\Glk[n-2]{0}(x)}, c_{\Glk[n-2]{0}(x)}))
\to
(\Omega_x X, (c_{\Glk[n-2]{0}(x)}, c_{\Glk[n-2]{0}(x)}))
\]
in $\Sn{n-2}\backslash \M$. This is our desired weak equivalence.
\end{proof}

\begin{thm}
Let $X$ be an object of $\M$.
\begin{itemize}
\item There is a canonical bijection
  \[ \pi_0(\Pi_\infty(X)) \cong \pi_0(X), \]
natural in $X$.
\item Let $n \ge 1$ and let $x$ be an object of the fundamental \oo-groupoid
of $X$. There is a canonical isomorphism of groups
\[ \pi_n(\Pi_\infty(X), x) \cong \pi_n(X, x),\]
natural in $X$.
\end{itemize}
\end{thm}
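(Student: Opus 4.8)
The $\pi_0$ statement is exactly the first part of Proposition~\ref{prop:hom_hom}, so nothing further is needed there. For the homotopy groups, I would argue by induction on $n \ge 1$. The base case $n = 1$ combines Proposition~\ref{prop:desc_var_pi1}, which gives a natural isomorphism of groupoids $\varpi_1(\Pi_\infty(X)) \cong \Pi_1(\Dn{0}, X)$, with the definition $\pi_1(X, x)_\M = \pi_1(\Dn{0}, X; x, x)$ from paragraph~\ref{paragr:pi_1}: evaluating the groupoid isomorphism on the base point $x$ yields $\pi_1(\Pi_\infty(X), x) = \Hom_{\varpi_1(\Pi_\infty(X))}(x, x) \cong \Hom_{\Pi_1(\Dn{0}, X)}(x, x) = \pi_1(X, x)$, naturally in $X$.

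\textbf{For the inductive step}, suppose $n \ge 2$ and the isomorphism is known in degree $n - 1$ for every object of every model category in which every object is fibrant (in particular for $\Omega_x X$, which is such an object since it is a fibrant object of $\M$). The previous proposition supplies a canonical natural isomorphism
\[ \pi_n(\Pi_\infty(X), x) \cong \pi_{n-1}(\Pi_\infty(\Omega_x X), c_x). \]
Applying the induction hypothesis to the based object $(\Omega_x X, c_x)$ gives
\[ \pi_{n-1}(\Pi_\infty(\Omega_x X), c_x) \cong \pi_{n-1}(\Omega_x X, c_x), \]
naturally in $X$. Finally, the definition of higher Quillen homotopy groups in paragraph on higher homotopy groups is precisely $\pi_n(X, x) = \pi_{n-1}(\Omega_x X, c_x)$. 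Composing the three isomorphisms yields the desired canonical, natural isomorphism $\pi_n(\Pi_\infty(X), x) \cong \pi_n(X, x)$.

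\textbf{The main point requiring care} is the naturality and the bookkeeping of base points: each of the intermediate objects ($\Pi_\infty(\Omega_x X)$, the loop object $\Omega_x X$) is only well-defined up to a canonical isomorphism in the appropriate homotopy category, so I would need to check that the chain of isomorphisms is compatible with morphisms of based objects $(f, f_0) \colon (X, x) \to (X', x')$. This is exactly the content built into the naturality clauses of Propositions~\ref{prop:hom_hom}, \ref{prop:pi_1_Omega}, \ref{prop:pi_n_pi_1} and the preceding proposition, together with the functoriality of $\Omega$ and of the Quillen $\pi_n$ recalled in the sections on Quillen's theory; so the verification is a matter of invoking these clauses in order rather than a genuine obstacle. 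One should also note, as in the remark following Theorem~\ref{thm:pi_n_u_x}, that on the Grothendieck side $\pi_n(\Pi_\infty(X), x)$ is defined via $\Glk[n-1]{0}(x)$ and the relevant zig-zags, but the independence results (Corollary~\ref{coro:pi_n_indep} and that remark) guarantee this introduces no ambiguity.
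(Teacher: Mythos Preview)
Your proposal is correct and follows essentially the same approach as the paper's proof: induction on $n$, with the base cases $n=0,1$ handled via Proposition~\ref{prop:desc_var_pi1} (the paper derives the $\pi_0$ case from this groupoid isomorphism rather than directly from Proposition~\ref{prop:hom_hom}, but either works), and the inductive step via the preceding loop-space proposition together with the definition $\pi_n(X,x)=\pi_{n-1}(\Omega_x X,c_x)$. The paper presents the induction more tersely and leaves the naturality to the cited propositions, which you spell out more explicitly.
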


\begin{proof}
Let us prove the result by induction on $n \ge 0$. For $n = 0$ and $n = 1$,
the result is a direct consequence of Proposition \ref{prop:desc_var_pi1}.
For $n \ge 2$, we have
\begin{align*}
  \pi_n(\Pi_\infty(X), x)
  & \cong \pi_{n-1}(\Pi_\infty(\Omega_x X), c_x) 
  && \text{(by the previous proposition)} \\
  & \cong \pi_{n-1}(\Omega_x X, c_x)
  && \text{(by induction hypothesis)} \\
  & = \pi_n(X, x),
\end{align*}
thereby proving the theorem.
\end{proof}

\begin{coro}
If $X$ is an object of $\M$, the homotopy groups of $\Pi_\infty(X)$ depend
only on~$X$. In particular, they do not depend on the choice of the formal
coherator $C$.
\end{coro}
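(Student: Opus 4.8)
The plan is to read the corollary off the theorem just proved, essentially by unwinding definitions. First I would emphasise that the \emph{Quillen} homotopy invariants $\pi_0(X)$ and $\pi_n(X, x)_\M$, as constructed in Sections~6 and~7, are defined purely in terms of the model category $\M$: the set $\pi_0(X) = [\ast, X]$, the group $\pi_1(\Dn{0}, X; x, x)$, the loop objects $\Omega_x X$, and the inductive formula $\pi_n(X, x) = \pi_{n-1}(\Omega_x X, c_x)$ all involve only cofibrant weakly contractible objects of $\M$, path objects, and (left and right) homotopies. None of these constructions refers to the formal coherator $C$, to the functor $F : \G \to \M$, or to the globular functor $K : C \to \M$ out of which $\Pi_\infty$ was built.

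Second, I would record that an object $x$ of $\Pi_\infty(X)$ is, by definition, a morphism $F(\Dn{0}) \to X$ of $\M$, and that $F(\Dn{0})$ is a cofibrant weakly contractible object of $\M$ (this is exactly the hypothesis on $F$). Hence every object $x$ of $\Pi_\infty(X)$ determines a based object $(X, x)$ of $\M$ in the sense of Section~7, and $\pi_n(X, x)$ is a well-defined group attached to the pair $(X, x)$ alone.

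Third, I would invoke the theorem: the canonical, natural isomorphisms $\pi_0(\Pi_\infty(X)) \cong \pi_0(X)$ and $\pi_n(\Pi_\infty(X), x) \cong \pi_n(X, x)$ exhibit the homotopy groups of $\Pi_\infty(X)$ as groups depending only on the data $(X, x)$, and therefore not on the choice of $C$, of $F$, or of $K$. In particular, if $C$ and $C'$ are two formal coherators, with associated fundamental \oo-groupoid functors $\Pi_\infty$ and $\Pi'_\infty$, then for each base point $x$ one has $\pi_n(\Pi_\infty(X), x) \cong \pi_n(X, x) \cong \pi_n(\Pi'_\infty(X), x)$, so the two constructions yield the same homotopy groups.

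I do not expect a genuine obstacle: all the substance is already in the theorem, and what remains is bookkeeping. The only point requiring a word of care is matching base points when the underlying functor $F$ is also varied, since objects of $\Pi_\infty(X)$ are maps out of $F(\Dn{0})$; this is handled by the observation used throughout Section~7 that any two cofibrant fibrant weakly contractible objects of $\M$ are homotopy equivalent, so that the corresponding based objects, and hence their homotopy groups, are canonically identified.
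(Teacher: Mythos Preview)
Your proposal is correct and follows exactly the approach the paper has in mind: the corollary is stated without proof in the paper, being an immediate consequence of the preceding theorem identifying the groupoid homotopy groups with the Quillen ones, which are defined solely in terms of $\M$. Your additional remarks about varying $F$ and matching base points via homotopy equivalences of cofibrant weakly contractible objects are reasonable elaborations, but the paper does not spell any of this out.
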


\begin{coro}\label{coro:charac_we}
Let $f : X \to Y$ be a morphism of $\M$. Then $\Pi_\infty(f)$ is a weak
equivalence of \oo-groupoids of type $C$ if and only if the following
conditions are satisfied:
\begin{itemize}
\item the map $\pi_0(f) : \pi_0(X) \to \pi_0(Y)$ is a bijection;
\item for every $n \ge 1$ and every base point $x : \Dn{0} \to X$, the
morphism $\pi_n(f, x) : \pi_n(X, x) \to \pi_n(Y, f(x))$ is an isomorphism.
\end{itemize}
\end{coro}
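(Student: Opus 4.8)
The plan is to deduce the corollary purely formally from the theorem above, by unwinding the definition of a weak equivalence of \oo-groupoids of type $C$ (the paragraph on weak equivalences of \oo-groupoids) against the two natural isomorphisms
\[
\pi_0(\Pi_\infty(X)) \cong \pi_0(X),
\qquad
\pi_n(\Pi_\infty(X), x) \cong \pi_n(X, x).
\]
The main work is bookkeeping: matching the notion of ``object of the fundamental \oo-groupoid of $X$'' with that of ``base point of $X$'', and checking that the two flavours of functoriality of $\pi_n$ (on \oo-groupoids endowed with an object, and on based objects of $\M$) are intertwined by the isomorphisms of the theorem — but both facts are already contained in the naturality asserted there, so nothing genuinely new is required.

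First I would recall that, since $\Pi_\infty(X)_0 = \Hom_\M(\Dn{0}, X)$ with $\Dn{0}$ the fixed cofibrant weakly contractible object used to build $F$ (so that $K(\Dn{0}) = \Dn{0}$), an object $x$ of the fundamental \oo-groupoid of $X$ is precisely a base point $x : \Dn{0} \to X$ of $X$ in the sense of Section~7. Moreover the morphism $\Pi_\infty(f)$ is induced by post-composition with $f$, so on objects it sends $x$ to $fx$, which as a base point of $Y$ is $f(x)$, and $(f, \id{\Dn{0}})$ is then a morphism of based objects from $(X, x)$ to $(Y, f(x))$. With this dictionary, naturality in $X$ (more precisely, in the based object $(X, x)$ for the second isomorphism) means that for every morphism $f : X \to Y$ of $\M$ and every base point $x$ of $X$, the squares
\[
\xymatrix@C=1pc@R=1pc{
\pi_0(\Pi_\infty(X)) \ar[r] \ar[d] & \pi_0(\Pi_\infty(Y)) \ar[d] \\
\pi_0(X) \ar[r] & \pi_0(Y)
}
\qquad\qquad
\xymatrix@C=1pc@R=1pc{
\pi_n(\Pi_\infty(X), x) \ar[r] \ar[d] & \pi_n(\Pi_\infty(Y), \Pi_\infty(f)(x)) \ar[d] \\
\pi_n(X, x) \ar[r] & \pi_n(Y, f(x))
}
\]
commute, the horizontal maps being induced respectively by $\Pi_\infty(f)$ and by $f$, and the vertical maps being the isomorphisms of the theorem.

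From the left-hand square, $\pi_0(\Pi_\infty(f))$ is a bijection if and only if $\pi_0(f)$ is; from the right-hand square, the map $\pi_n(\Pi_\infty(X), x) \to \pi_n(\Pi_\infty(Y), \Pi_\infty(f)(x))$ induced by $\Pi_\infty(f)$ is an isomorphism if and only if $\pi_n(f, x) : \pi_n(X, x) \to \pi_n(Y, f(x))$ is. By the definition of a weak equivalence of \oo-groupoids applied to $\Pi_\infty(f) : \Pi_\infty(X) \to \Pi_\infty(Y)$, together with the identification of the objects of $\Pi_\infty(X)$ with the base points $x : \Dn{0} \to X$, the morphism $\Pi_\infty(f)$ is a weak equivalence of \oo-groupoids of type $C$ precisely when $\pi_0(f)$ is a bijection and $\pi_n(f, x)$ is an isomorphism for every $n \ge 1$ and every base point $x$ of $X$. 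This is exactly the stated condition, so the corollary follows. I do not expect any real obstacle: the only step that could cause trouble is confirming that the two functoriality conventions for $\pi_n$ agree, and this is precisely the naturality established in the theorem (and, upstream, in the propositions comparing Quillen and Grothendieck homotopy groups).
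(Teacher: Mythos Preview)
Your proof is correct and is precisely the paper's approach spelled out in detail: the paper's own proof is the single sentence ``This follows immediately from the naturality of the isomorphisms of the above theorem,'' and your argument is just an explicit unpacking of that naturality via the commutative squares you wrote down.
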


\begin{proof}
This follows immediately from the naturality of the isomorphisms of the
above theorem.
\end{proof}

\begin{coro}
Let $f$ be a weak equivalence of $\M$. Then $\Pi_\infty(f)$ is a
weak equivalence of \oo-groupoids of type $C$.
\end{coro}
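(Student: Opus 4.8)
The plan is to deduce this directly from Corollary~\ref{coro:charac_we}, which reduces the statement to two checks about a weak equivalence $f : X \to Y$ of $\M$: that $\pi_0(f)$ is a bijection, and that for every $n \ge 1$ and every base point $x : \Dn{0} \to X$ the induced morphism of Quillen homotopy groups $\pi_n(X, x) \to \pi_n(Y, f(x))$ is an isomorphism. Both of these facts have already been recorded in Sections~6 and~7.

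Concretely, I would first recall that $\pi_0(X) = [\ast, X]$ is a functor on $\Ho(\M)$, hence sends weak equivalences to bijections; this gives that $\pi_0(f)$ is a bijection. Next, fixing $n \ge 1$ and a base point $x : \Dn{0} \to X$, I would observe that $(f, \id{\Dn{0}})$ is a morphism of based objects from $(X, x)$ to $(Y, f x)$ whose underlying morphism of $\M$, namely $f$, is a weak equivalence. By the last assertion of the paragraph on higher homotopy groups (which itself rests on Proposition~\ref{prop:Pi_1_we} and the derived-functor description of the loop object $\Omega_x X$), the map $\pi_n(f, \id{\Dn{0}}) : \pi_n(X, x) \to \pi_n(Y, f x)$ is then an isomorphism. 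Feeding these two observations into Corollary~\ref{coro:charac_we} yields that $\Pi_\infty(f)$ is a weak equivalence of \oo-groupoids of type $C$.

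I do not expect any genuine obstacle here, since all the substantive work is already done: the comparison theorem (and its corollary \ref{coro:charac_we}) transports the question entirely into the model category $\M$, where invariance of Quillen homotopy groups under weak equivalences is available. The only point requiring a little care is matching base points: one must use that the base point of $Y$ in Corollary~\ref{coro:charac_we} is literally $f \circ x$, with source object $\Dn{0}$, so that the relevant comparison morphism is $\pi_n(f, \id{\Dn{0}})$ and no auxiliary homotopy equivalence of base-point objects intervenes.
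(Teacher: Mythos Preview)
Your proposal is correct and follows exactly the paper's approach: apply Corollary~\ref{coro:charac_we} and use that weak equivalences in $\M$ induce isomorphisms on the Quillen homotopy groups $\pi_0$ and $\pi_n$. The paper's proof is a one-liner to this effect; you have simply spelled out the two verifications in more detail.
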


\begin{proof}
This follows immediately from the previous corollary and the fact that weak
equivalences of $\M$ induce isomorphisms on homotopy groups.
\end{proof}

\begin{coro}
Let $\M$ be $\Top$ endowed with its usual model category structure. A
map~$f$ is a weak equivalence of topological spaces if and only if
$\Pi_\infty(f)$ is a weak equivalence of \oo-groupoids of type $C$.
\end{coro}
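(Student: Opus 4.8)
The plan is to derive the statement from Corollary~\ref{coro:charac_we}. Since $\Top$ with its usual (Serre) model structure has every object fibrant, the constructions of the previous sections apply, $\Pi_\infty$ is defined, and the corollary tells us that $\Pi_\infty(f)$ is a weak equivalence of \oo-groupoids of type $C$ if and only if $\pi_0(f)$ is a bijection and, for every $n \ge 1$ and every base point $x$ of $X$, the map $\pi_n(f, x)$ is an isomorphism, where the $\pi_0$ and $\pi_n$ here are the model-categorical homotopy groups defined in Section~7. On the other hand, by definition of the model structure on $\Top$, a map $f$ is a weak equivalence of topological spaces precisely when it is a weak homotopy equivalence, i.e.\ when it induces a bijection on path components and isomorphisms on all ordinary homotopy groups at all points. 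So everything reduces to checking that, for a topological space $X$, the model-categorical homotopy groups of $X$ agree with the usual ones, naturally in $X$.

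For $\pi_0$ this is immediate: $\pi_0(X) = [\ast, X] = [\Dn{0}, X]$, and since $\Dn{0}$ is a cofibrant replacement of the point, i.e.\ a contractible space, $[\Dn{0}, X]$ is the usual set of path components of $X$. For $n \ge 1$ and a base point $x \colon \Dn{0} \to X$, I would proceed by induction on $n$. As $\Dn{0}$ is contractible, the class of $x$ in $\Ho(\Dn{0}\backslash\M)$ is determined by a point of $X$, and the model-categorical homotopy groups depend only on this class, so we may identify $x$ with an honest point of $X$. For $n = 1$, Proposition~\ref{prop:pi_1_Omega} gives $\pi_1(X, x) \cong \pi_0(\Omega_x X)$; since $\Omega_x X$ is by construction a homotopy pullback of $\Dn{0} \xrightarrow{x} X \xleftarrow{x} \Dn{0}$, it is a model for the based loop space of $X$ at $x$, whence $\pi_1(X, x)$ is the ordinary fundamental group. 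For $n \ge 2$ we have $\pi_n(X, x) = \pi_{n-1}(\Omega_x X, c_x)$ by definition; applying the inductive hypothesis to the space $\Omega_x X$ — again a model for the classical based loop space — yields $\pi_n(X, x) \cong \pi_{n-1}(\Omega_x X, c_x) \cong \pi_n(X, x)_{\mathrm{top}}$, where $\pi_n(X, x)_{\mathrm{top}}$ denotes the ordinary $n$-th homotopy group. All of these isomorphisms are natural in $X$.

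Combining the two ingredients: $\Pi_\infty(f)$ is a weak equivalence of \oo-groupoids of type $C$ if and only if $\pi_0(f)$ is a bijection and $\pi_n(f, x)$ is an isomorphism for every $n \ge 1$ and every base point $x$ (Corollary~\ref{coro:charac_we}), if and only if $f$ induces a bijection on path components and isomorphisms on all ordinary homotopy groups (by the identifications above, together with the routine remarks that every point of $X$ arises from a base point and that whether $\pi_n(f, x)$ is an isomorphism depends only on the path component of $x$), if and only if $f$ is a weak equivalence of topological spaces. The only point requiring any care is the identification of the loop object $\Omega_x X$ with the ordinary based loop space and the attendant bookkeeping with base points; this is a standard property of homotopy pullbacks in $\Top$, so I do not expect it to cause any real difficulty.
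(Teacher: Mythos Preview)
Your argument is correct and follows the same approach as the paper: invoke Corollary~\ref{coro:charac_we} and identify the model-categorical homotopy groups of $\Top$ with the classical ones. The paper's proof is the one-line ``This follows immediately from Corollary~\ref{coro:charac_we}'', leaving implicit the identification you have spelled out; your version simply makes explicit what the paper takes for granted.
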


\begin{proof}
This follows immediately from Corollary \ref{coro:charac_we}.
\end{proof}

\begin{rem}
This corollary is proved directly in Section 4.4 of \cite{AraThesis} (see in
particular Corollary 4.4.11).
\end{rem}

\begin{tparagr}{The functor $\overline{\Pi_\infty}$}
Let $\W_\Top$ be the class of weak equivalences of topological spaces and
let $\W_{\wgpd{}_C}$ be the class of weak equivalences of \oo-groupoids of
type $C$. By the previous corollary, the functor $\Pi_\infty$ sends
$\W_\Top$ into $\W_{\wgpd{}_C}$. This functor thus induces a functor
\[ \overline{\Pi_\infty} : \Hot = \Top[\W_\Top^{-1}] \to \Ho(\wgpdC{C}) =
\wgpdC{C}[{\W^{-1}_{\wgpd{}_C}}]. \]
\end{tparagr}

We can now state a precise version of Grothendieck's conjecture:
\begin{conj}[Grothendieck]\label{conj:Groth}
If $C$ is a coherator, the functor $\overline{\Pi_\infty}$ is an equivalence
of categories.
\end{conj}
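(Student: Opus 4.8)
The plan is to exhibit $\overline{\Pi_\infty}$ as the functor induced on homotopy categories by the right adjoint of a Quillen equivalence, using the adjunction $R_{C,F} \dashv \Pi_{C,F}$ between $\wgpdC{C}$ and $\Top$ from paragraph~\ref{par:constr_Pi}. The first step is to equip $\wgpdC{C}$ with a model category structure whose weak equivalences are the weak equivalences of \oo-groupoids defined above. Since the underlying category of a (formal) coherator is small and $\wgpdC{C} = \Mod{C}$ is a reflective localization of $\pref{C}$ closed under filtered colimits, $\wgpdC{C}$ is locally presentable, so there are two natural candidates: transfer the Quillen structure of $\Top$ (or of simplicial sets) along $R_{C,F} \dashv \Pi_{C,F}$ by Kan's transfer theorem, or construct directly a \emph{folk}-type structure whose cofibrant objects are retracts of the \oo-groupoids arising from cellular towers --- the polygraphic ones. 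In either case the real content is the acyclicity condition: that pushouts, and transfinite composites of pushouts, of generating trivial cofibrations remain weak equivalences. Here Theorem~\ref{thm:equiv_we}, which reduces being a weak equivalence to a statement about $\PI_1$ and the sets $\pi_n(G, u, v)$, is the tool of choice, together with a van Kampen / descent statement for these invariants that itself has to be established.

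Granting such a model structure, $R_{C,F} \dashv \Pi_{C,F}$ becomes a Quillen adjunction (immediately so if it was transferred), and it remains to prove it is a Quillen equivalence. Since every topological space is fibrant, this amounts to two statements: for every cofibrant \oo-groupoid $G$, the unit $G \to \Pi_\infty(R_{C,F}(G))$ is a weak equivalence; and for every space $X$, the derived counit $R_{C,F}(QX) \to X$ is a weak equivalence, where $QX \to \Pi_\infty(X)$ is a cofibrant replacement. By the comparison theorem of Section~8 the homotopy groups of $\Pi_\infty(X)$ coincide with those of $X$, so both statements follow from a single principle together with a compatibility check: the realization functor $R_{C,F}$ preserves the homotopy groups of cofibrant \oo-groupoids, compatibly with the canonical comparison of homotopy groups under the unit. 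For a $G$ presented by a cellular tower, one would attempt to compute $\pi_n(R_{C,F}(G))$ by induction on the cells, matching $\pi_n(G)$.

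The main obstacle --- and the reason the conjecture is still open --- is precisely this last principle: there is no known way to control $R_{C,F}$ on a general polygraphic \oo-groupoid across all dimensions. In low dimensions it is the classical fact that the fundamental groupoid classifies $1$-types and bigroupoids classify $2$-types (compare Remark~\ref{rem:trunc}), but no inductive machine is known that pushes this through the entire Grothendieck tower. I would therefore fall back on a comparison strategy: combine the comparison between Grothendieck-Maltsiniotis and Batanin \oo-groupoids from \cite{AraThesis} with a homotopy hypothesis for the operadic model, or else construct directly a homotopically fully faithful, essentially surjective \emph{nerve} $\wgpdC{C} \to \widehat{\Delta}$ into simplicial sets by analysing the image of a globular sum under a suitable cofibrant resolution. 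Essential surjectivity of $\overline{\Pi_\infty}$ would then follow from a Postnikov-tower argument --- building a space with the homotopy groups and $k$-invariants read off from $G$ --- whose hard point is realizing those $k$-invariants inside $\wgpdC{C}$; full faithfulness on homotopy categories comes for free from the Quillen-equivalence package once the unit and counit are understood.
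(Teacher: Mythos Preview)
The paper does not prove this statement: it is explicitly labelled a \emph{conjecture} and, as the introduction says, ``this conjecture is still not proved.'' There is therefore no proof in the paper to compare your proposal against.

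Your text is not a proof either, and you say so yourself: you correctly identify the main obstruction (controlling the realization $R_{C,F}$ on polygraphic \oo-groupoids through all dimensions) and note that ``the conjecture is still open.'' What you have written is a reasonable sketch of a \emph{strategy} --- put a model structure on $\wgpdC{C}$, promote the adjunction $R_{C,F} \dashv \Pi_{C,F}$ to a Quillen equivalence, and reduce everything to showing that $R_{C,F}$ preserves homotopy groups of cofibrant objects --- but every hard step is deferred. In particular: the existence of the model structure on $\wgpdC{C}$ with the indicated weak equivalences is itself a well-known open problem (the acyclicity condition you mention is exactly the difficulty), and the ``single principle'' you isolate is the heart of the conjecture, not a lemma one can grant. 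So as a proof this has a genuine gap at each of the two load-bearing points; as a research outline it is a fair summary of the known shape of the problem.
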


\bibliographystyle{amsplain}
\bibliography{biblio}

\end{document}